\documentclass[12pt]{amsart}
\usepackage{amssymb,amsmath,hyperref}
\usepackage[all]{xy}

\newcommand{\lra}{\longrightarrow}
\newcommand{\ra}{\rightarrow}

\newcommand{\bC}{\mathbb C}
\newcommand{\bD}{\mathbb D}
\newcommand{\bF}{\mathbb F}

\newcommand{\bP}{\mathbb P}
\newcommand{\bQ}{\mathbb Q}
\newcommand{\bR}{\mathbb R}
\newcommand{\bZ}{\mathbb Z}

\newcommand{\cA}{\mathcal A}
\newcommand{\cC}{\mathcal C}
\newcommand{\cD}{\mathcal D}
\newcommand{\cI}{\mathcal I}
\newcommand{\cN}{\mathcal N}
\newcommand{\cO}{\mathcal O}

\newcommand{\Aut}{\operatorname{Aut}}
\newcommand{\Bl}{\operatorname{Bl}}
\newcommand{\CH}{\operatorname{CH}}
\newcommand{\Gr}{\operatorname{Gr}}
\newcommand{\Pf}{\operatorname{Pf}}
\newcommand{\Pfaff}{\operatorname{Pfaff}}
\newcommand{\Pic}{\operatorname{Pic}}
\newcommand{\prim}{\operatorname{prim}}
\newcommand{\Sym}{\operatorname{Sym}}
\newcommand{\Scroll}{\operatorname{Scr}}

\theoremstyle{plain}
\newtheorem{prop}{Proposition}
\newtheorem{theo}[prop]{Theorem}
\newtheorem{coro}[prop]{Corollary}
\newtheorem{lemm}[prop]{Lemma}
\theoremstyle{remark}
\newtheorem{rema}[prop]{Remark}
\theoremstyle{definition}
\newtheorem{defi}[prop]{Definition}
\newtheorem{exam}[prop]{Example}
\newtheorem{ques}[prop]{Question}

\numberwithin{equation}{section}

\author{Brendan Hassett}
\address{Department of Mathematics \\
Brown University \\
Box 1917 \\
151 Thayer Street \\
Providence, RI 02912 \\USA}
\email{bhassett@math.brown.edu}

\title[Cubic fourfolds, K3 surfaces, and rationality questions]{Cubic fourfolds, K3 surfaces, and rationality questions}
\begin{document}

\maketitle


This is a survey of the geometry of complex cubic
fourfolds with a view toward rationality questions. 
Smooth cubic surfaces have been known to be rational
since the 19th century \cite{DolCr}; cubic threefolds
are irrational by the work of Clemens and Griffiths
\cite{CG}. Cubic fourfolds are likely more varied
in their behavior.
While there are examples known
to be rational, we expect that most cubic fourfolds
should be irrational. However, no cubic
fourfolds are {\em proven} to be irrational.

Our organizing principle is that
progress is likely to be driven
by the dialectic between concrete geometric constructions
(of rational, stably rational,
and unirational parametrizations) 
and conceptual tools differentiating various classes
of cubic fourfolds
(Hodge theory, moduli spaces and derived
categories, and decompositions of the diagonal). 
Thus the first section of this paper is devoted to 
classical examples of rational parametrizations.
In section two we focus on Hodge-theoretic 
classifications of cubic fourfolds with various 
special geometric structures. 
These are explained in section three using techniques
from moduli theory informed by deep results on
K3 surfaces and their derived categories.
We return to constructions in the
fourth section, focusing on unirational 
parametrizations of special classes of cubic fourfolds.
In the last section, we touch on recent 
applications of decompositions of the diagonal
to rationality questions, and what they mean for
cubic fourfolds.

\subsection*{Acknowledgments} This work is partially supported by NSF grant
1551516. 
I am grateful to N.~Addington, A.~Auel, H.~Nuer,
R.~Thomas, 
Y.~Tschinkel, and C.~Voisin for comments on this survey
and 
J.-L.~Colliot-Th\'el\`ene, J.~Harris, B.~Mazur, and
A.~V\'arilly-Alvarado
for discussions informing its content.
I would also like to thank the anonymous referee for pointing out work of
Crauder and Katz that invalidated the original formulation
of Question~\ref{ques:Cremona}.
I am grateful to CIME and CIRM 
for organizing the school `Rationality problems
in algebraic geometry' where this material was to be presented,
and the organizers Rita Pardini and Gian Pietro Pirola for
supporting this survey despite my inability to lecture at the school.

\section{Introduction and classical constructions}

Throughout, we work over the complex numbers. 

\subsection{Basic definitions}
\label{ss:basic}

Let $X$ be a smooth projective variety of dimension $n$.
If there exists
a birational map
$\rho: \bP^n \stackrel{\sim}{\dashrightarrow} X$
we say that $X$ is {\em rational}. It is {\em stably rational}
if $X\times \bP^m$ is rational for some $m\ge 0$. 
If there exists 
a generically finite map
$\rho:\bP^n \dashrightarrow X$
we say that $X$ is {\em unirational}; this is equivalent
to the existence of a dominant map from a projective
space of arbitrary dimension to $X$.

A {\em cubic fourfold}
is a smooth cubic hypersurface $X \subset \bP^5$, with defining
equation 
$$F(u,v,w,x,y,z)=0$$
where $F\in \bC[u,v,w,x,y,z]$ is homogeneous of degree three.
Cubic hypersurfaces in $\bP^5$ are parametrized by
$$\bP(\bC[u,v,w,x,y,z]_3) \simeq \bP^{55}$$
with the smooth ones corresponding to a Zariski open 
$U \subset \bP^{55}$.

Sometimes we will consider singular cubic hypersurfaces; in
these cases, we shall make explicit reference to the singularities.
The singular cubic hypersurfaces in $\bP^5$ are parametrized by an
irreducible divisor
$$\Delta:=\bP^{55} \setminus U.$$
Birationally, $\Delta$ is a $\bP^{49}$ bundle over $\bP^5$,
as having a singularity at a point $p\in \bP^5$ imposes six
independent conditions.

The moduli space of cubic fourfolds is the quotient
$$\cC:=[U/\operatorname{PGL}_6].$$
This is a Deligne-Mumford stack with quasi-projective coarse
moduli space, e.g., by classical results on the
automorphisms and invariants of hypersufaces \cite[ch.~4.2]{MFK}. 
Thus we have
$$\dim(\cC)=\dim(U)-\dim(\operatorname{PGL}_6)=55-35=20.$$

\subsection{Cubic fourfolds containing two planes}
\label{subsect:twoplanes}
Fix disjoint projective planes 
$$P_1=\{u=v=w=0\},  P_2=\{x=y=z=0\} \subset \bP^5$$
and consider the cubic fourfolds $X$ containing $P_1$
and $P_2$. For a concrete equation, consider
$$X=\{ux^2+vy^2+wz^2=u^2x+v^2y+w^2z\}$$
which is in fact smooth! 
See \cite[\S5]{HuKl} for
more discussion of this example.

More generally, fix forms
$$F_1,F_2 \in \bC[u,v,w;x,y,z]$$
of bidegree $(1,2)$ and $(2,1)$ in the variables
$\{u,v,w\}$ and $\{x,y,z\}$. Then the cubic hypersurface
$$X=\{F_1+F_2=0\} \subset \bP^5$$
contains $P_1$ and $P_2$, and the defining equation of every
such hypersurface takes that form.
Up to scaling, these form a projective space of dimension $35$. 
The group
$$\{g \in \operatorname{PGL}_6:g(P_1)=P_1,g(P_2)=P_2 \}$$
has dimension $17$. Thus the locus of cubic fourfolds containing a 
pair of disjoint planes has codimension two in $\cC$.

The cubic fourfolds of this type are {\em rational}.
Indeed, we construct a birational map as follows:
Given points $p_1\in P_1$ and $p_2 \in P_2$, let
$\ell(p_1,p_2)$ be the line containing them. The Bezout
Theorem allows us to write
$$\ell(p_1,p_2)\cap X =\begin{cases} \{p_1,p_2,\rho(p_1,p_2)\}
			\text{ if } \ell(p_1,p_2) \not \subset X \\
			 \ell(p_1,p_2) \text{ otherwise.}
		\end{cases}
$$
The condition $\ell(p_1,p_2) \subset X$ is expressed by the equations
$$S:=\{F_1(u,v,w;x,y,z)=F_2(u,v,w;x,y,z)=0\} \subset
P_{1,[x,y,z]}\times P_{2,[u,v,w]}.$$
Since $S$ is a complete intersection of hypersurfaces of bidegrees $(1,2)$
and $(2,1)$ it is a K3 surface, typically with Picard group of rank two.
Thus we have a well- defined morphism
$$\begin{array}{rcl}
\rho:P_1 \times P_2  \setminus S & \ra & X \\
      (p_1,p_2) & \mapsto & \rho(p_1,p_2)
\end{array}
$$
that is birational, as each point of $\bP^5 \setminus (P_1\cup P_2)$ 
lies on a unique line joining the planes.  

We record the linear series inducing this birational parametrization:
$\rho$ is given by the forms of bidegree $(2,2)$ containing $S$ and
$\rho^{-1}$ by the quadrics in $\bP^5$ containing $P_1$ and $P_2$;

\subsection{Cubic fourfolds containing a plane and odd multisections}
\label{subsect:POM}
Let $X$ be a cubic fourfold containing a plane $P$. Projection from
$P$ gives a quadric surface fibration
$$q:\tilde{X}:=\Bl_P(X) \ra \bP^2$$
with singular fibers over a sextic curve $B\subset \bP^2$.
If $q$ admits a rational section then $\tilde{X}$ is rational
over $K=\bC(\bP^2)$ and thus over $\bC$ as well.
The simplest example of such a section is another plane
disjoint from $P$.
Another example was found by Tregub \cite{Tregub2}:
Suppose there is a quartic Veronese surface
$$V \simeq \bP^2 \subset X$$
meeting $P$ transversally at three points. Then
its proper transform $\tilde{V} \subset \tilde{X}$ is a section
of $q$, giving rationality.

To generize this, we employ a basic property of quadric surfaces 
due to Springer (cf.~\cite[Prop.~2.1]{HaJAG} and \cite{Swan}):
\begin{quote}
Let $Q\subset \bP^3_K$ be a quadric surface smooth over a field
$K$. Suppose there exists an extension $L/K$ of odd degree
such that $Q(L)\neq \emptyset$. Then $Q(K)\neq \emptyset$
and $Q$ is rational over $K$ via projection from a rational point.
\end{quote}
This applies when there exists a surface $W\subset X$ intersecting the
generic fiber of $q$ transversally in an odd number of points.
Thus we the following:
\begin{theo} \label{theo:JAG} \cite{HaJAG}
Let $X$ be a cubic fourfold containing a plane $P$ and 
projective surface $W$ such that
$$\deg(W)-\left<P, W\right>$$
is odd. Then $X$ is rational.
\end{theo}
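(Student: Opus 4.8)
The plan is to exhibit $W$ as an odd multisection of the quadric surface fibration $q:\tilde{X}=\Bl_P(X)\ra\bP^2$ of Section~\ref{subsect:POM} and then invoke Springer's theorem. Recall that a point of $\bP^2$ corresponds to a linear subspace $\bP^3\subset\bP^5$ containing $P$; the intersection $X\cap\bP^3$ is a cubic surface containing the plane $P$, and the residual quadric surface $Q$ is, after passing to proper transforms, the fiber of $q$ over that point. Over the complement of the sextic $B$ these quadrics are smooth, so the generic fiber $Q_K$ is a smooth quadric surface over $K=\bC(\bP^2)$.

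The first step is to compute the intersection of $W$ with a general fiber. Since $W\subset X$ we have $W\cap(X\cap\bP^3)=W\cap\bP^3$, and for general $\bP^3\supset P$ this finite set decomposes as $(W\cap P)\sqcup(W\cap Q)$. Writing $\bP^3=H_1\cap H_2$ for general hyperplanes $H_1,H_2\supset P$ and intersecting with $W$ in two stages — $H_1$ cuts out on $W$ a curve of degree $\deg(W)$ passing through the $\langle P,W\rangle$ points of $W\cap P$, and $H_2$, which also contains $P$ and hence those points, meets that curve in $\deg(W)$ points — one finds that $W$ meets the general fiber $Q$ in $\deg(W)-\langle P,W\rangle$ points. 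Equivalently, and more robustly, one computes $[\tilde{W}]\cdot[Q]$ in $H^{\bullet}(\tilde{X},\bZ)$ using $[Q]=(h-e)^2$, where $h$ is the hyperplane class and $e$ the exceptional divisor of $\tilde{X}\ra X$: the cross term restricts to zero on $\tilde{W}$, $e^2$ contributes $-\langle P,W\rangle$, and $h^2$ contributes $\deg(W)$.

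The second step is purely field-theoretic. As $\deg(W)-\langle P,W\rangle$ is odd it is in particular nonzero, so $\tilde{W}\ra\bP^2$ is dominant and its generic fiber is a zero-dimensional $K$-scheme of length $\deg(W)-\langle P,W\rangle$. Since this length is the sum over the points of the fiber of (local length)$\,\cdot\,$(residue degree over $K$), some point has residue field $L$ with $[L:K]$ odd, hence $Q_K(L)\neq\emptyset$. Springer's theorem (the quoted statement) then gives $Q_K(K)\neq\emptyset$ and $Q_K$ rational over $K$. Therefore $\tilde{X}$, whose generic fiber over $\bP^2$ is $Q_K$, has function field $\bC(\bP^2)(t_1,t_2)$ and is rational over $\bC$; since $\tilde{X}\ra X$ is birational, $X$ is rational.

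The delicate point is the intersection count of the first step, together with the precise meaning of the bracket $\langle P,W\rangle$ in the hypothesis: one must know that $W$ is not contained in $P$ (a plane has self-intersection $3$ in a cubic fourfold, so $W=P$ is excluded by the parity hypothesis), that $W$ meets $P$ and a general $\bP^3\supset P$ without excess components, and that the bracket in the statement is the intersection number $[P]\cdot[W]$ computed above. When $W$ is reduced, irreducible, and meets $P$ transversally this is elementary; in general it is cleanest to work with the class $[W]\in H^4(X,\bZ)$ and carry out the computation in $H^{\bullet}(\tilde{X},\bZ)$, which requires no reducedness and no regularity of $W$ beyond being a surface, since Springer's theorem is applied only to the smooth generic fiber $Q_K$.
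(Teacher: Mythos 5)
Your proposal is correct and follows the same route the paper takes: realize $W$ as a multisection of the quadric surface fibration $q:\Bl_P(X)\to\bP^2$, compute that its degree over the base is $\deg(W)-\left<P,W\right>$ via the fiber class $(h-e)^2$, and apply Springer's theorem to the smooth generic fiber to get rationality over $K=\bC(\bP^2)$ and hence over $\bC$. Your write-up actually supplies more detail than the survey does (the odd-residue-degree extraction from the length of the generic fiber, and the exclusion of degenerate positions of $W$), all of which is consistent with the intended argument.
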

The intersection form on the middle cohomology of $X$ is denoted by
$\left<,\right>$.

Theorem~\ref{theo:JAG} 
gives a countably infinite collection of codimension two subvarieties
in $\cC$ parametrizing rational cubic fourfolds.
Explicit birational maps $\rho:\bP^2 \times \bP^2 \stackrel{\sim}{\dashrightarrow}
X$ can be found in many cases \cite[\S5]{HaJAG}.

We elaborate the geometry behind Theorem~\ref{theo:JAG}:
Consider the relative variety of lines of the quadric surface fibration $q$
$$f:F_1(\tilde{X}/\bP^2) \ra \bP^2.$$
For each $p\in \bP^2$, $f^{-1}(p)$ parametrizes the lines contained
in the quadric surface $q^{-1}(p)$. When the fiber is smooth, this is a disjoint
union of two smooth $\bP^1$'s; for $p\in B$, we have a single
$\bP^1$ with multiplicity two. Thus the Stein factorization 
$$f:F_1(\tilde{X}/\bP^2) \ra S \ra \bP^2$$
yields a degree two K3 surface---the double cover $S\ra \bP^2$ branched
over $B$---and a $\bP^1$-bundle $r:F_1(\tilde{X}/\bP^2)\ra S$. 
The key to the proof is the equivalence of
the following conditions (see also \cite[Th.~4.11]{Kuz15}):
\begin{itemize}
\item{the generic fiber of $q$ is rational over $K$;}
\item{$q$ admits a rational section;}
\item{$r$ admits a rational section.}
\end{itemize}
The resulting birational map $\rho^{-1}:X\dashrightarrow \bP^2\times \bP^2$
blows down the lines incident to the section of $q$, which are parametrized
by a surface birational to $S$.

Cubic fourfolds containing a plane
have been re-examined recently from the perspective of twisted
K3 surfaces and their derived categories \cite{Kuz08,MacSte,Kuz15}.
The twisted K3 surface is the pair $(S,\eta)$, where $\eta$ 
is the class in the Brauer group of $S$ arising from $r$;
note that $\eta=0$ if and only if the three equivalent
conditions above hold.
Applications of this geometry to rational points
may be found in \cite{HVAV}.

\begin{rema} 
The technique of Theorem~\ref{theo:JAG} applies to all smooth
projective fourfolds
admiting quadric surface fibrations $Y\ra P$ over a rational surface
$P$.
Having an odd multisection suffices to give rationality.
\end{rema}

\subsection{Cubic fourfolds containing quartic scrolls}
A {\em quartic scroll} is a smooth 
rational ruled surface $\Sigma \hookrightarrow \bP^5$ with
degree four, with the rulings embedded as lines.
There are two possibilities:
\begin{itemize} 
\item{$\bP^1 \times \bP^1$ embedded via the linear series
$|\cO_{\bP^1\times \bP^1}(1,2)|$}
\item{the Hirzebruch surface $\bF_2$ embedded via 
$|\cO_{\bF_2}(\xi+f)|$ where $f$ is a fiber and $\xi$ a
section at infinity ($f\xi=1$ and $\xi^2=2$).}
\end{itemize}
The second case is a specialization of the first.
Note that all scrolls of the first class are projectively equivalent
and have equations given by the $2\times 2$ minors of:
$$\left( \begin{matrix}
u & v & x & y \\
v & w & y & z 
\end{matrix}
\right) $$
\begin{lemm} \label{lemm:scroll}
Let $\Sigma$ be a quartic scroll, realized as the image of 
$\bP^1 \times \bP^1$ under the linear series $|\cO_{\bP^1\times \bP^1}(1,2)|$.
Then a generic point $p\in \bP^5$ lies on a unique secant to $\Sigma$. 
The locus of points on more than one secant equals the Segre threefold
$\bP^1 \times \bP^2$ associated with the Veronese embedding 
$\bP^1 \hookrightarrow \bP^2$ of the second factor.
\end{lemm}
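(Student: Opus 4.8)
The plan is to reduce everything to multilinear algebra by choosing good coordinates, and then to classify secant lines of $\Sigma$ by a single rank computation. Since $|\cO_{\bP^1\times\bP^1}(1,2)|$ is a complete linear system, $\Sigma$ is determined up to $\operatorname{PGL}_6$, so I may fix a convenient presentation. Writing $\bP^1\times\bP^1=\bP(V)\times\bP(U)$ with $V$ and $U$ two-dimensional, the sections are $V^\vee\otimes\Sym^2U^\vee$, so $\bP^5=\bP(V\otimes W)$ with $W:=\Sym^2U$, and the embedding is $([v],[u])\mapsto[v\otimes u^2]$. Thus
$$\Sigma=\{[v\otimes w]:v\in V\setminus 0,\ [w]\in\Gamma\},\qquad \Gamma:=\{[u^2]:u\in U\setminus 0\}\subset\bP(W),$$
where $\Gamma\cong\bP^1$ is the smooth conic, the Veronese image of $\bP(U)$. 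Hence $\Sigma=\bP(V)\times\Gamma$ is contained in the Segre threefold
$$T:=\{[P]\in\bP(V\otimes W):\operatorname{rank}P\le 1\}=\bP(V)\times\bP(W)\cong\bP^1\times\bP^2,$$
which---with the second factor $\bP(U)$ of $\bP^1\times\bP^1$ Veronese-embedded into the $\bP^2$-factor of $T$ as $\Gamma$---is exactly the threefold named in the statement. Since $T$ has codimension $2$, a general point of $\bP^5$ has rank $2$.

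The crucial step is a trichotomy for secant lines. Given distinct $P_1=[v_1\otimes w_1]$ and $P_2=[v_2\otimes w_2]$ on $\Sigma$, expanding the two rows of $\lambda\,v_1\otimes w_1+\mu\,v_2\otimes w_2$ in the basis $\{w_1,w_2\}$ (when $[w_1]\ne[w_2]$) shows that this tensor has rank $\le 1$ exactly when $\lambda=0$, $\mu=0$, or $v_1,v_2$ are proportional. I conclude that the chord $\ell=\overline{P_1P_2}$ is of precisely one of the following kinds: (a) a ruling $\bP(V)\times\{pt\}\subset\Sigma$, if $[w_1]=[w_2]$; (b) a line $\{v_0\}\times L$, $L\subset\bP(W)$ a line, which lies in $T$ and meets $\Sigma$ in $\{v_0\}\times(L\cap\Gamma)=\{P_1,P_2\}$, if $[v_1]=[v_2]$; (c) a line meeting $T$ in exactly $\{P_1,P_2\}$ and not contained in $T$, otherwise. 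In particular the only lines on $\Sigma$ are the rulings, and no line meets $\Sigma$ in exactly three points.

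Next I count chords through a point $[p]$. If $[p]\notin T$, then $\operatorname{rank}p=2$ and $p\colon V^\vee\to W$ is injective, with image a two-dimensional subspace $W_p\subset W$. By the trichotomy any chord through $[p]$ is of type (c), so $p=v_1\otimes w_1+v_2\otimes w_2$ (necessarily with $v_1,v_2$ nonzero and non-proportional, else $p$ would have rank $\le 1$); then $w_1,w_2$ span $W_p$, which forces $\{[w_1],[w_2]\}=\bP(W_p)\cap\Gamma$, two points unless $\bP(W_p)$ is tangent to $\Gamma$. Once $w_1,w_2$ are fixed, $v_1,v_2$ are just the coordinates of $p\in V\otimes W_p$ in the basis $\{w_1,w_2\}$. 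Hence $[p]$ lies on at most one chord, and on exactly one when $\bP(W_p)$ meets $\Gamma$ transversally. As the rank-$\le 1$ locus, the locus of rank-$2$ points with $\bP(W_p)$ tangent to $\Gamma$, and the union of the tangent lines to $\Sigma$ are all proper subvarieties of $\bP^5$, a general point of $\bP^5$ lies on a unique secant. Conversely, if $[p]\in\Sigma$ every line from $[p]$ to another point of $\Sigma$ is a secant; and if $[p]\in T\setminus\Sigma$, then $p=v_0\otimes w_0$ with $[w_0]\notin\Gamma$, and the pencil of lines $L\ni[w_0]$ in $\bP(W)$ gives a pencil of type-(b) secants $\{v_0\}\times L$ through $[p]$. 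So every point of $T$ lies on infinitely many secants, and the locus of points on more than one secant is exactly $T$.

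I do not expect a genuine obstacle here: once the ``tensor plus conic'' picture is in place the argument is elementary, the substance being the rank computation and the classification of secants. The only delicate point is the degenerate behaviour---the tangency locus $\bP(W_p)\cap\Gamma$, tangent lines, and points on $\Sigma$ itself---but all of this is governed by the dimension estimates above and affects neither the generic statement nor the identification of the exceptional locus with $T$.
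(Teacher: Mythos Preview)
Your argument is correct and complete, but it follows a genuinely different route from the paper's. The paper disposes of the first assertion by invoking the double point formula from \cite[\S9.3]{Fulton}: projecting $\Sigma$ generically to $\bP^4$ and computing $D_{\hat\Sigma\subset\bP^4}=1$ shows that a generic point of $\bP^5$ lies on exactly one secant. For the second assertion the paper argues qualitatively: two secants $\ell(s_1,s_2)$ and $\ell(s_3,s_4)$ meeting at a point forces $s_1,\dots,s_4$ to be coplanar, hence to fail to impose independent conditions on $|\cO_{\bP^1\times\bP^1}(1,2)|$; one then checks that this happens only when three of the points lie on a ruling or all four lie on a conic of the second ruling, and the spans of those conics sweep out the Segre threefold.

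Your approach replaces both steps by a single piece of multilinear algebra: writing $\bP^5=\bP(V\otimes\Sym^2U)$ and identifying the Segre threefold with the rank-one locus, you classify every secant by the rank behaviour along it and then read off the secant through a rank-two point $[p]$ from the intersection of the line $\bP(\operatorname{im}p)$ with the conic $\Gamma$. This is more elementary and entirely self-contained---no intersection theory is needed---and it makes the identification of the exceptional locus with $\bP^1\times\bP^2$ completely explicit. The paper's approach, by contrast, ties the lemma into the double point machinery that is reused later (see \S\ref{sect:unirat}), so it foreshadows the unirationality constructions; your version trades that thematic link for directness.
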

\begin{proof}
The first assertion follows from a computation with the double point
formula \cite[\S9.3]{Fulton}. For the second, if two secants to $\Sigma$,
$\ell(s_1,s_2)$ and $\ell(s_3,s_4)$, intersect then $s_1,\ldots,s_4$ are
coplanar.
But points $s_1,\ldots,s_4 \in \Sigma$ that fail to impose independent
conditions on 
$|\cO_{\bP^1\times \bP^1}(1,2)|$ necessarily have at least
three points on a line or all the points on a conic contained in $\Sigma$.
\end{proof}

Surfaces in $\bP^5$ with `one apparent double point'
have been studied for a long time. 
See \cite{edge} for discussion and additional
classical references and \cite{BRS} for a modern application
to cubic fourfolds.

\begin{prop}
If $X$ is a cubic fourfold containing a quartic scroll $\Sigma$
then $X$ is rational.
\end{prop}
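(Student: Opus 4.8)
The plan is to use the quartic scroll $\Sigma$ to build a birational map $\bP^4 \dashrightarrow X$ via secant lines, exactly in the spirit of the construction for two planes in \S\ref{subsect:twoplanes}. First I would invoke Lemma~\ref{lemm:scroll}: through a generic point $p \in \bP^5$ there passes a unique secant line to $\Sigma$, and these secants sweep out all of $\bP^5$. This gives a rational map in the other direction first, or better, it suggests parametrizing $X$ by a parameter space for secants to $\Sigma$. The natural parameter space is $\Sym^2(\Sigma) \cong \Sym^2(\bP^1\times\bP^1)$, or a suitable blow-up thereof; but since $\Sigma$ is a scroll it is cleaner to use its ruling. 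Each ruling line $L$ meets $X$ in a degree-three scheme containing $L \cap \Sigma$, which has length two (a ruling line of a quartic scroll is a secant? — no, a ruling line lies on $\Sigma$, so $L \subset$ the surface and $L$ meets the residual cubic in one more point), so residual intersection $L \cap X$ away from $\Sigma$ is a single point. This realizes $X$ birationally as fibered in... I must be careful here.

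The cleaner approach: mimic \S\ref{subsect:twoplanes} with the \emph{secant variety} structure. Consider the incidence correspondence of pairs $(s_1, s_2) \in \Sigma \times \Sigma$ together with the line $\ell(s_1,s_2)$; by Bezout, $\ell(s_1,s_2) \cap X = \{s_1, s_2, \rho(s_1,s_2)\}$ when the line is not contained in $X$, defining a rational map $\rho: \Sym^2(\Sigma) \dashrightarrow X$. Lemma~\ref{lemm:scroll} says that for generic $q \in \bP^5$ there is a unique secant through it, so $\rho$ is birational onto its image once we check the image is $X$ (which holds because the secant variety of a surface in $\bP^5$ is all of $\bP^5$ when it has one apparent double point) and that the generic fiber is one point (again Lemma~\ref{lemm:scroll}, since the bad locus where $q$ lies on two secants is a threefold, hence does not dominate $X$). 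Finally $\Sym^2(\bP^1\times\bP^1)$ is rational — indeed $\Sym^2$ of any rational surface is rational, since $\Sym^2(\bP^2)$ is classically rational and $\Sym^2$ of a blow-up relates birationally to $\Sym^2(\bP^2)$; or more directly $\Sym^2(\bP^1 \times \bP^1)$ is dominated by $(\bP^1)^4 / (\bZ/2)$ which is rational by the no-name lemma or by explicit symmetric-function coordinates. Composing, $X$ is rational.

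The main obstacle is verifying that $\rho$ is genuinely birational rather than merely dominant and generically finite, i.e., that its degree is one — this is precisely where Lemma~\ref{lemm:scroll} does the essential work, since "a generic point lies on a \emph{unique} secant" is exactly the statement that the generic fiber of $\rho$ is a single unordered pair $\{s_1,s_2\}$. One must also confirm the lines contained in $X$ (the locus $S \subset \Sigma \times \Sigma$ over which $\rho$ is undefined) form a proper closed subset and so do not obstruct birationality; this parallels the role of the K3 surface $S$ in \S\ref{subsect:twoplanes}, and dimension-counting suffices since the lines on $X$ form a surface (the Fano variety of lines is four-dimensional, and those meeting $\Sigma$ in a length-$\ge 2$ scheme form a proper subvariety). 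A secondary point requiring care is the rationality of $\Sym^2(\Sigma)$: for the generic scroll $\Sigma \cong \bP^1 \times \bP^1$ this is $\Sym^2(\bP^1 \times \bP^1)$, whose rationality I would establish directly via symmetric coordinates rather than appealing to a general theorem; for the degenerate scroll on $\bF_2$, rationality of the symmetric square follows since $\bF_2$ is rational and birational to $\bP^1 \times \bP^1$, so the symmetric squares are birational. I expect the whole argument to run in parallel with the two-planes case, with Lemma~\ref{lemm:scroll} substituting for the elementary "unique line joining the two planes" observation.
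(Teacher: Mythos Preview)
Your secant-line argument is correct, but it is not the proof the paper gives for the Proposition. The paper's main argument is shorter and more explicit: the linear system $|\cI_{\Sigma}(2)|$ of quadrics through $\Sigma$ induces a morphism $\Bl_{\Sigma}(X) \to \bP^5$ that maps $X$ birationally onto a quadric hypersurface, and quadrics are rational. No symmetric-square rationality is needed, and the birationality comes from a direct degree count rather than from Lemma~\ref{lemm:scroll}.

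Your approach is, however, exactly the content of Remark~\ref{rema:scroll}, repackaged. There the author factors through the ruling $R\simeq\bP^1$: for $r_1,r_2\in R$, the span of $\ell(r_1)\cup\ell(r_2)$ meets $X$ in a cubic surface containing two disjoint lines, giving a rational cubic-surface fibration $Y\to\Sym^2(R)\simeq\bP^2$; the degree of $Y\to X$ is computed by counting secants through a generic point, which is one by Lemma~\ref{lemm:scroll}. Your $\Sym^2(\Sigma)$ parametrization is the same family of secant lines viewed without the ruling structure; the extra step you need (rationality of $\Sym^2(\bP^1\times\bP^1)$) is traded in the Remark for the elementary rationality of a cubic surface with two skew lines over the function field of $\bP^2$. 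Both routes ultimately rest on Lemma~\ref{lemm:scroll} for the degree-one statement, whereas the paper's quadric-system proof does not.
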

Here is the idea:
Consider the linear series of quadrics cutting out $\Sigma$. It induces
a morphism
$$\Bl_{\Sigma}(X) \ra \bP(\Gamma(\cI_{\Sigma}(2)))\simeq \bP^5,$$
mapping $X$ birationally to a quadric hypersurace. 
Thus $X$ is rational.

\begin{rema} \label{rema:scroll}
Here is another approach. Let $R\simeq \bP^1$ denote the ruling of $\Sigma$;
for $r\in R$, let $\ell(r) \subset \Sigma \subset X$ denote the corresponding
line.
For distinct $r_1,r_2 \in R$, the intersection
$$\operatorname{span}(\ell(r_1),\ell(r_2)) \cap X$$
is a cubic surface containing disjoint lines. Let $Y$ denote
the closure 
$$\{(x,r_1,r_2):x\in \operatorname{span}(\ell(r_1),\ell(r_2)) \cap X \}
\subset X\times \Sym^2(R)\simeq X \times \bP^2.$$
The induced $\pi_2:Y\ra \bP^2$ is a cubic surface fibration such that the generic
fiber contains two lines.  Thus the generic fiber
$Y_K$, $K=\bC(\bP^2)$, is rational over $K$
and consequently $Y$ is rational over $\bC$.   

The degree of $\pi_1:Y \ra X$ can be computed as follows: It is the number
of secants to $\Sigma$ through a generic point $p\in X$. There is
one such secant by Lemma~\ref{lemm:scroll}.
We will return to this in \S\ref{sect:unirat}.
\end{rema}

Consider the nested Hilbert scheme
$$\Scroll=\{\Sigma \subset X \subset \bP^5: \Sigma
\text{ quartic scroll}, X \text{ cubic fourfold} \}$$
and let $\pi:\Scroll \ra \bP^{55}$ denote the morphism forgetting
$\Sigma$. We have $\dim(\Scroll)=56$ so the fibers of $\pi$ are
positive dimensional.
In 1940,
Morin \cite{Morin} asserted that the generic fiber of $\pi$ is
one dimensional, deducing (incorrectly!) that the generic cubic
fourfold is rational. Fano \cite{Fano} corrected this a few years later,
showing that the generic fiber has dimension two; cubic fourfolds
containing a quartic scroll thus form a {\em divisor} in $\cC$.
We will develop a conceptual approach to this in \S\ref{subsect:SCD}.

\subsection{Cubic fourfolds containing a quintic del Pezzo surface}
Let $T\subset \bP^5$ denote a quintic del Pezzo surface, i.e.,
$T=\Bl_{p_1,p_2,p_3,p_4}(\bP^2)$ anti-canonically embedded.
Its defining equations are quadrics
$$Q_i=a_{jk}a_{lm}-a_{jl}a_{km}+a_{jm}a_{kl}, 
\{1,\ldots,5\}=\{i,j,k,l,m\}, j<k<l<m,$$
where the $a_{rs}$ are generic linear forms on $\bP^5$. The rational map
$$\begin{array}{rcl}
Q: \bP^5 & \dashrightarrow & \bP^4 \\
 \  [u,v,w,x,y,z] & \mapsto & [Q_1,Q_2,Q_3,Q_4,Q_5]
\end{array}
$$
contracts each secant of $T$ to a point. Note that a generic $p\in \bP^5$
lies on a unique such secant.

\begin{prop}
A cubic fourfold containing a quintic del Pezzo surface is rational.
\end{prop}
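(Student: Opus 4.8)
The plan is to mimic the argument used above for the quartic scroll, now with the linear system of quadrics through $T$. The five quadrics $Q_1,\dots,Q_5$ span the space of quadrics vanishing on $T$, and the associated rational map $Q\colon\bP^5\dashrightarrow\bP^4$ contracts every secant line of $T$ to a point; its general fibre is such a secant, and, as observed above, a general point of $\bP^5$ lies on a \emph{unique} secant of $T$. I would restrict $Q$ to $X$ and show that $Q|_X\colon X\dashrightarrow\bP^4$ is birational, which gives rationality of $X$ at once.

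First, $Q|_X$ is dominant. A general secant line $\ell=\ell(p_1,p_2)$ of $T$ is not contained in $X$ --- otherwise the secant variety of $T$, which fills all of $\bP^5$, would lie in $X$ --- so $\ell\cap X$ is a length-three scheme by B\'ezout. Two of its points are $p_1,p_2\in T\subset X$, and the residual point $q_\ell$ is a single reduced point that does not lie on $T$: a line meeting $T$ in three points meets each $Q_i$ in three points and hence lies on $T=\bigcap_i\{Q_i=0\}$. Thus $q_\ell\in X\setminus T$ and $Q(q_\ell)=Q(\ell)$, so $Q|_X$ dominates $\bP^4$.

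Next, the general fibre of $Q|_X$ is a single point, and here I would pass to the blow-up. On $\tilde X:=\Bl_T(X)\subset\Bl_T(\bP^5)$ the map $Q$ becomes a morphism, at least along the proper transforms of general secants. The proper transform $\tilde\ell$ of $\ell$ meets the exceptional divisor of $\Bl_T(\bP^5)$ only over $p_1$ and $p_2$, in the points determined by the directions of $\ell$ there; since $\ell$ is transverse to $X$ at $p_1$ and $p_2$ (true for a general secant, since the projection of $T$ from a general point of $T$ is non-degenerate), those directions are not tangent to $X$, so $\tilde\ell$ is disjoint from the exceptional divisor $\bP(\cN_{T/X})$ of $\tilde X$. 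Consequently $\tilde\ell\cap\tilde X=\{q_\ell\}$: the general fibre of $\tilde X\to\bP^4$ is one reduced point, so $\tilde X\to\bP^4$ --- equivalently $X\dashrightarrow\bP^4$ --- is birational and $X$ is rational. (Informally: the two points of $\ell\cap T$ lie in the indeterminacy locus of $Q$ and so do not contribute to the fibre, leaving only $q_\ell$.)

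The delicate part is this fibre count, that is, pinning down the generic incidence facts: that a general secant of $T$ is a genuine secant not lying on $X$, that exactly two of the three points of $\ell\cap X$ lie on $T$ and the third does not degenerate onto $T$, and the transversality used on the blow-up. Granting these, rationality follows formally as in the scroll case; note that here $X$ maps birationally onto $\bP^4$ itself rather than onto a quadric hypersurface.
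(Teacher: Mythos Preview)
Your proof is correct and follows exactly the approach the paper takes: restrict $Q$ to $X$ and show that $\Bl_T(X)\to\bP^4$ is birational by counting the points of a general secant fiber that survive outside $T$. The paper's own proof is a single sentence asserting this birationality, so your argument simply fills in the details the paper leaves to the reader.
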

Indeed, restricting $Q$ to 
$X$ yields a {\em birational} morphism
$\Bl_T(X) \ra \bP^4$.

\subsection{Pfaffian cubic fourfolds}
\label{subsect:Pfaff}
Recall that if $M=(m_{ij})$ is skew-symmetric
$2n\times 2n$ matrix then the determinant
$$\det(M)=\Pf(M)^2,$$
where $\Pf(M)$ is a homogeneous form of degree $n$ in the entries
of $M$, known as its Pfaffian. 
If the entries of $M$ are taken as linear forms in $u,v,w,x,y,z$,
the resulting hypersurface 
$$X=\{\Pf(M)=0 \} \subset \bP^5$$ 
is a Pfaffian cubic fourfold.

We put this on a more systematic footing. Let $V$ denote a six-dimensional
vector space and consider the strata
$$\Gr(2,V) \subset \Pfaff(V) \subset  \bP(\bigwedge^2 V),$$
where $\Pfaff(V)$ parametrizes the rank-four tensors. 
Note that $\Pfaff(V)$ coincides with the secant variety to
$\Gr(2,V)$, which is degenerate, i.e., smaller than the expected dimension.
We also have dual picture
$$\Gr(2,V^*) \subset \Pfaff(V^*) \subset  \bP(\bigwedge^2 V^*).$$
A codimension six subspace $L \subset \bP(\bigwedge^2 V)$
corresponds to a codimension nine subspace $L^{\perp} \subset 
\bP(\bigwedge^2 V^*)$. 
Let $X=L^{\perp}\cap \Pfaff(V^*)$ denote the resulting Pfaffian
cubic fourfold and $S=L\cap \Gr(2,V)$ the associated degree fourteen
K3 surface.

Beauville and Donagi \cite{BD} (see also \cite{Tregub1})
established the following properties,
when $L$ is generically chosen:
\begin{itemize}
\item{$X$ is rational: For each codimension one subspace $W \subset  V$,
the mapping 
$$\begin{array}{rcl}
Q_W:X & \dashrightarrow & W \\
\ [\phi] & \mapsto & \ker(\phi)\cap W
\end{array}$$
is birational.
Here we interpret $\phi:V \ra V^*$ as an
antisymmetric linear transformation.}
\item{$X$ contains quartic scrolls: For each point $[P] \in S$, consider
$$\Sigma_{P}:=\{[\phi] \in X: \ker(\phi) \cap P \neq 0 \}.$$
We interpret
$P\subset V$ as a linear
subspace. This is the two-parameter family described by Fano.}
\item{$X$ contains quintic del Pezzo surfaces: For each $W$, consider
$$T_W:=\{[\phi] \in X: \ker(\phi) \subset W \},$$
the indeterminacy of $Q_W$. This is a five-parameter family.}
\item{The variety $F_1(X)$ of lines on $X$ is isomorphic to $S^{[2]}$, the Hilbert scheme of length two
subschemes on $S$.}
\end{itemize}
Tregub \cite{Tregub2} observed the connection between containing a quartic
scroll and containing a quintic del Pezzo surface.
For the equivalence between containing a quintic del Pezzo
surface and the Pfaffian condition, see
\cite[Prop.~9.2(a)]{BeauDet}.
  
\begin{rema}
Cubic fourfolds $X$ containing disjoint planes $P_1$ and $P_2$ admit
`degenerate' quartic scrolls and are therefore limits of Pfaffian
cubic fourfolds \cite{Tregub2}.
As we saw in \S\ref{subsect:twoplanes}, the lines connecting $P_1$ and $P_2$
and contained in $X$ are parametrized by a K3 surface
$$S\subset P_1 \times P_2.$$
Given $s\in S$ generic, let $\ell_s$ denote the corresponding line and
$L_i=\operatorname{span}(P_i,\ell_s)\simeq \bP^3$.
The intersection
$$L_i \cap X=P_i \cup Q_i$$
where $Q_i$ is a quadric surface. The surfaces $Q_1$ and $Q_2$ meet
along the common ruling $\ell_s$,
hence $Q_1 \cup_{\ell_s} Q_2$ is a limit of quartic
scrolls. 
\end{rema}

\begin{rema}[Limits of Pfaffians] \label{rema:limitPfaff}
A number of recent papers have explored smooth limits of Pfaffian cubic fourfolds
more systematically. For analysis of the intersection between
cubic fourfolds containing a plane and limits of the Pfaffian locus,
see \cite{ABBVA}.
Auel and Bolognese-Russo-Staglian\`o \cite{BRS}
have shown that smooth limits of Pfaffian cubic
fourfolds are always rational; \cite{BRS} includes a 
careful analysis of the topology of the Pfaffian locus
in moduli.
\end{rema}

\subsection{General geometric properties of cubic hypersurfaces}
\label{subsect:GGP}
 
Let $\Gr(2,n+1)$ denote the Grassmannian
of lines in $\bP^n$. We have a tautological exact sequence
$$0 \ra S \ra \cO_{\Gr(2,n+1)}^{n+1} \ra Q \ra 0$$
where $S$ and $Q$ are the tautological sub- and quotient bundles,
of ranks $2$ and $n-1$. 
For a hypersurface $X\subset \bP^n$, the variety of lines 
$F_1(X) \subset \Gr(2,n+1)$ parametrizes lines contained in $X$.
If $X=\{G=0\}$ for some homogeneous form $G$ of degree $d=\deg(X)$ then 
$F_1(X)=\{\sigma_G=0\}$, where 
$$\sigma_G \in \Gamma(\Gr(2,n+1),\Sym^d(S^*))$$
is the image of $G$ under the $d$-th symmetric power of the transpose to 
$S \hookrightarrow  \cO_{\Gr(2,n+1)}^{n+1}$.

\begin{prop} \cite[Th.~1.10]{AK}
Let $X\subset \bP^n, n\ge 3,$ be a smooth cubic hypersurface. 
Then $F_1(X)$ is smooth of dimension $2n-6$.
\end{prop}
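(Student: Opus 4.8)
The plan is to verify smoothness and compute the dimension of $F_1(X)$ directly from the equation $F_1(X)=\{\sigma_G=0\}$ inside $\Gr(2,n+1)$, using the description of the variety of lines as the zero locus of a section of the rank-$(d+1)$ bundle $\Sym^d(S^*)$ (here $d=3$, so $\Sym^3(S^*)$ has rank $4$). Since $\dim \Gr(2,n+1)=2(n-1)$, the expected dimension of this zero locus is $2(n-1)-4=2n-6$, which matches the claim; so the whole content is to show the section $\sigma_G$ is \emph{regular}, i.e. that $F_1(X)$ is smooth of the expected dimension everywhere when $X$ is smooth. First I would fix a line $\ell\subset X$, corresponding to a point $[\ell]\in F_1(X)$, and write down the differential of $\sigma_G$ at $[\ell]$ in suitable coordinates: choosing a splitting $V=\bC^{n+1}=U\oplus U'$ with $\ell=\bP(U)$, the tangent space $T_{[\ell]}\Gr(2,n+1)=\operatorname{Hom}(U,U')$, and the fibre of $\Sym^3(S^*)$ at $[\ell]$ is $\Sym^3(U^*)$, the space of cubic forms on $\ell$. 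The section $\sigma_G$ evaluated at $[\ell]$ is just $G|_\ell$, which vanishes because $\ell\subset X$; its derivative in a direction $\phi\in\operatorname{Hom}(U,U')$ is, up to scalar, the restriction to $\ell$ of the cubic form obtained by differentiating $G$ along $\phi$ — concretely, if $\ell$ is parametrized by $(s,t)$ and $\phi$ moves the line, the derivative is $s\,\partial G/\partial(\text{2nd factor direction}) + \ldots$, i.e. it sees the partial derivatives of $G$ transverse to $\ell$.

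The key step is then the following linear-algebra/Bertini-type argument. The map $d\sigma_G|_{[\ell]}:\operatorname{Hom}(U,U')\to \Sym^3(U^*)$ fails to be surjective precisely when there is a nonzero cubic form $\psi$ on $\ell$ orthogonal to its image; unwinding the formula for the derivative, surjectivity is equivalent to the statement that the partial derivatives $\partial G/\partial x_i$, restricted to $\ell$, span enough of $\Sym^2(U^*)\otimes U'^*$ — precisely, the obstruction is that the Jacobian ideal of $G$ restricted to $\ell$ does not fill up. But $X$ smooth means the $\partial G/\partial x_i$ have no common zero, hence on the line $\ell$ the restricted partials $\partial G/\partial x_i|_\ell$ form a system of quadratic forms on $\bP^1$ with no common zero; a short argument then shows this forces the relevant linear map to be surjective, so $F_1(X)$ is smooth at $[\ell]$ of codimension $4=\operatorname{rank}\Sym^3(S^*)$, giving dimension $2n-6$. (One can phrase this via the normal bundle: $F_1(X)$ is smooth at $[\ell]$ of the expected dimension if and only if $H^0(\ell, N_{\ell/X})$ has the expected dimension, equivalently $N_{\ell/X}$ has no summand $\cO(-2)$ or worse, which is exactly what smoothness of $X$ along $\ell$ buys.)

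The main obstacle is the explicit identification of the cokernel of $d\sigma_G|_{[\ell]}$ with something controlled by the Jacobian ideal — i.e. being careful with the twists and with the fact that $\operatorname{Hom}(U,U')$ only sees \emph{transverse} directions, so one must check that tangential reparametrizations of $\ell$ (which do not move the line in $\Gr$) have already been quotiented out correctly. Once the derivative is pinned down, the non-vanishing of the Jacobian is a clean application of smoothness: on $\bP^1$, a base-point-free linear system of quadrics is automatically very ample enough to kill any would-be cokernel vector. I would organize the write-up as: (1) recall $F_1(X)=Z(\sigma_G)$ and compute expected dimension; (2) compute $d\sigma_G$ at a point $[\ell]\in F_1(X)$; (3) show smoothness of $X$ implies $d\sigma_G|_{[\ell]}$ is surjective; (4) conclude $F_1(X)$ is smooth of dimension $2n-6$, noting $n\ge 3$ ensures $F_1(X)$ is nonempty (e.g. for $n=3$ it is the classical $27$ lines, dimension $0$).
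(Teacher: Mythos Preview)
Your proposal is correct and follows exactly the approach the paper indicates: the paper's entire proof is the single sentence ``The proof is a local computation on tangent spaces,'' deferring details to the cited reference \cite{AK}, and your outline is precisely that local computation carried out in coordinates. One small point worth making explicit in your write-up: the tangential partials $\partial_0 G|_\ell$ and $\partial_1 G|_\ell$ vanish identically (since every monomial of $G$ involves some $x_i$ with $i\ge 2$), so smoothness of $X$ along $\ell$ translates directly into base-point-freeness of the \emph{transverse} partials, which is what you need.
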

The proof is a local computation on tangent spaces.

\begin{prop} \label{prop:degreetwo}
Let $\ell \subset X \subset \bP^n$ be a smooth cubic hypersurface 
containing a line. Then $X$ admits a degree two unirational
parametrization, i.e., a degree two mapping
$$\rho:\bP^{n-1} \dashrightarrow X.$$
\end{prop}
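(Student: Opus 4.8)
The plan is to exploit the conic bundle structure on $X$ obtained by linear projection away from $\ell$. Let $B\cong\bP^{n-2}$ parametrize the $2$-planes $\Pi\subset\bP^n$ containing $\ell$, set $\tilde X:=\Bl_\ell X$, and let $\phi\colon\tilde X\to B$ be the morphism resolving this projection. For $q\in B$ the plane section $\Pi_q\cap X$ is a plane cubic curve containing the line $\ell$, hence of the form $\ell\cup C_q$ with $C_q\subset\Pi_q$ a conic, and the fibre $\phi^{-1}(q)$ is the proper transform of $C_q$. The first step is to check that $\phi$ is generically a conic bundle, i.e.\ that $C_q$ is a smooth conic for $q$ in a dense open subset of $B$; as for the quadric surface fibrations of \S\ref{subsect:POM}, the degenerate fibres occur only over a proper closed subset of $B$ because $X$ is smooth.

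The second step is to exhibit a rational bisection. Let $E=\bP(N_{\ell/X})$ be the exceptional divisor of $\tilde X\to X$, a projective bundle over $\ell\cong\bP^1$, hence a rational variety of dimension $n-2$. The restriction $\phi|_E\colon E\to B$ is generically finite of degree two, the general fibre of $\phi$ meeting $E$ in the two distinct points of $C_q\cap\ell$; thus $E$ is a bisection of $\phi$.

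The third step is to base change along this bisection. Put $\tilde X_E:=\tilde X\times_B E$, with its induced conic bundle $\phi_E\colon\tilde X_E\to E$ and its projection $g\colon\tilde X_E\to\tilde X$; as the pull-back of $\phi|_E$ the map $g$ is dominant of degree two. The inclusion $E\hookrightarrow\tilde X$ is a morphism over $B$ and therefore induces a section of $\phi_E$. A smooth conic over a field possessing a rational point is isomorphic to $\bP^1$, so $\phi_E$ is birational over $\bC(E)$ to $E\times\bP^1$; since $E$ is rational of dimension $n-2$, the total space $\tilde X_E$ is rational of dimension $n-1$. (The resulting degree two extension of function fields does not collapse, since the function field of a smooth conic is a regular extension of its base field.) Composing a birational parametrization $\bP^{n-1}\dashrightarrow\tilde X_E$ with $g$ and with the blow-down $\tilde X\dashrightarrow X$ produces the desired dominant map $\rho\colon\bP^{n-1}\dashrightarrow X$ of degree two.

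I expect the main obstacle to be the genericity assertion of the first step: that the general plane through $\ell$ meets $X$ in $\ell$ together with a \emph{smooth} conic (transverse to $\ell$). This is the point at which smoothness of $X$ is essential, and it can be verified by an explicit computation---choose coordinates with $\ell=\{x_2=\cdots=x_n=0\}$, write out the cubic form, and inspect its restriction to a general pencil of planes through $\ell$. Granting it, the remaining ingredients are routine: a conic with a rational point is rational, the degree of a generically finite morphism is preserved under base change, and a projective bundle over $\bP^1$ is rational.
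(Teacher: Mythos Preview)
Your proof is correct and follows essentially the same route as the paper: project from $\ell$ to obtain the conic bundle $\Bl_\ell(X)\to\bP^{n-2}$, observe that the exceptional divisor $E\simeq\bP(N_{\ell/X})$ is a rational bisection, base change along $E$ to obtain a section, and conclude that the resulting variety $\Bl_\ell(X)\times_{\bP^{n-2}}E$ is birational to $\bP^1\times E\simeq\bP^1\times\bP^{n-3}\times\ell\simeq\bP^{n-1}$ and maps two-to-one onto $X$. Your write-up is simply a more detailed version of the paper's sketch, including the (routine) verification that the generic fibre is a smooth conic meeting $\ell$ transversally.
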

Since this result is classical we only sketch the key ideas.
Consider the diagram
$$
\xymatrix{
& {\Bl_{\ell}(\bP^n)} \ar[dl] \ar[dr]& \\
{\bP^n}  \ar@{-->}[rr] & &   {\bP^{n-2}}
}
$$
where the bottom arrow is projection from $\ell$. 
The right arrow is a $\bP^2$ bundle.
This induces
$$
\xymatrix{
& {\Bl_{\ell}(X)} \ar[dl] \ar[dr]^q& \\
X  \ar@{-->}[rr] & &   {\bP^{n-2}}
}
$$
where $q$ is a conic bundle. The exceptional divisor
$E\simeq \bP(N_{\ell/X})\subset \Bl_{\ell}(X)$ 
meets each conic fiber in two points.
Thus the base change
$$Y:=\Bl_{\ell}(X)\times_{\bP^{n-2}}E \ra E$$
has a rational section and we obtain birational equvialences
$$Y\stackrel{\sim}{\dashrightarrow} \bP^1 \times E 
\stackrel{\sim}{\dashrightarrow} \bP^1 \times \bP^{n-3} \times \ell
\stackrel{\sim}{\dashrightarrow} \bP^{n-1}.$$
The induced $\rho:Y \dashrightarrow X$ is generically finite of degree two.

\section{Special cubic fourfolds}
 
We use the terminology {\em very general} to mean `outside
the union of a countable collection of Zariski-closed subvarieties'.
Throughout this section, $X$ denotes a smooth cubic fourfold over $\bC$.

\subsection{Structure of cohomology}
\label{subsect:SCD}

Let $X$ be a cubic fourfold and $h\in H^2(X,\bZ)$ the Poincar\'e
dual to the hyperplane class, so that $h^4=\deg(X)=3$. 
The Lefschetz hyperplane theorem and Poincar\'e duality give
$$H^2(X,\bZ)=\bZ h , \quad H^6(X,\bZ)=\bZ \frac{h^3}{3}.$$
The Hodge numbers of $X$ take the form
$$\begin{array}{ccccccccc}
	&	&	&	& 1 & 	&	&	& \\
	& 	&       & 0     &   & 0 &       &       & \\
	&	&  0	& 	& 1 & 	&   0	&	& \\
	&0	&       & 0     &   & 0 &       &   0   & \\
0 &	&  1	& 	& 21 & 	&   1	&	& 0 
\end{array}
$$
so the Hodge-Riemann bilinear relations imply
that $H^4(X,\bZ)$ is a unimodular lattice under the
intersection form $\left<,\right>$ of signature $(21,2)$. 
Basic classification results on quadratic forms \cite[Prop.~2.1.2]{HaCM}
imply
$$L:=H^4(X,\bZ)_{\left<,\right>} \simeq (+1)^{\oplus 21} \oplus (-1)^{\oplus 2}.$$
The primitive cohomology
$$L^0 := \{ h^2 \}^{\perp} \simeq A_2 \oplus U^{\oplus 2} \oplus E_8^{\oplus 2},$$
where
$$A_2=\left(\begin{matrix} 2 & 1 \\ 1 & 2 \end{matrix} \right), 
\quad
U=\left(\begin{matrix} 0 & 1 \\ 1 & 0 \end{matrix} \right), $$
and $E_8$ is the positive definite lattice associated with the 
corresponding Dynkin diagram.  
This can be shown using the incidence correspondence between $X$ and 
its variety of lines $F_1(X)$, which induces the
{\em Abel-Jacobi} mapping \cite{BD}
\begin{equation} \label{eqn:AbelJacobi}
\alpha:H^4(X,\bZ) \ra  H^2(F_1(X),\bZ)(-1),
\end{equation}
compatible with Hodge filtrations. (See \cite[\S2]{AdTh} for another 
approach.)
Restricting to primitive cohomology gives an isomorphism
$$
\alpha:H^4(X,\bZ)_{\prim} \stackrel{\sim}{\lra} H^2(F_1(X),\bZ)_{\prim}(-1).
$$
Note that $H^2(F_1(X),\bZ)$
carries the {\em Beauville-Bogomolov} form $\left(,\right)$;
see \S\ref{subsect:inter} for more discussion.
The shift in weights explains the change in signs
$$\left(\alpha(x_1),\alpha(x_2)\right)=
-\left<x_1,x_2\right>.$$

\subsection{Special cubic fourfolds defined}
\label{subsect:SCFD}

For a very general cubic fourfold $X$, any algebraic surface $S\subset X$
is homologous to a complete intersection, i.e.,
$$H^{2,2}(X,\bZ):=H^4(X,\bZ) \cap H^2(\Omega^2_X) \simeq \bZ h^2$$
so 
$$[S]\equiv n h^2, \quad n=\deg(S)/3.$$
This follows from the Torelli Theorem and the irreducibility
of the monodromy action for cubic fourfolds \cite{Voisin86};
see \S~\ref{subsect:structural} below for more details.
In particular, $X$ does not contain any quartic scrolls
or any surfaces of degree four; this explains why Morin's rationality
argument could not be correct.

On the other hand, the integral Hodge conjecture is valid
for cubic fourfolds \cite[Th.~1.4]{VoisinJAG}, so every class
$$\gamma \in H^{2,2}(X,\bZ)$$
is algebraic, i.e., arises from a codimension two algebraic cycle
with integral coefficients. Thus if
$$H^{2,2}(X,\bZ) \supsetneq \bZ h^2$$
then $X$ admits surfaces that are not homologous to complete
intersections.

\begin{defi}
A cubic fourfold $X$ is {\em special} if it admits an algebraic
surface $S\subset X$ not homologous to a complete intersection.
A {\em labelling} of a special cubic fourfold consists of 
a rank two saturated sublattice
$$h^2 \in K \subset H^{2,2}(X,\bZ);$$
its {\em discriminant} is the determinant of the intersection
form on $K$.
\end{defi}

Let $S\subset X$ be a smooth surface. Recall that 
$$c_1(T_X)=3h, \quad c_2(T_X)=6h^2$$
so the self-intersection
$$
\begin{array}{rcl}
\left<S,S\right>&=&c_2(N_{S/X})=
c_2(T_X|S)-c_2(T_S)-c_1(T_S)c_1(T_X|S)+c_1(T_S)^2 \\
                &=& 6H^2+3HK_S+K_S^2-\chi(S), 
\end{array}$$
where $H=h|S$ and $\chi(S)$ is the topological Euler characteristic.

\begin{enumerate}
\item{When $X$ contains a plane $P$ we have
$$K_8=\begin{array}{r|cc}
	& h^2 & P \\ 
\hline 
h^2 & 3    & 1 \\
P  & 1    & 3 
\end{array}.
$$
}
\item{When $X$ contains a cubic scroll $\Sigma_3$, i.e.,
$\Bl_p(\bP^2)$ embedded in $\bP^4$, we have
$$K_{12}=\begin{array}{r|cc}
	& h^2 & \Sigma_3 \\ 
\hline 
h^2 & 3    & 3 \\
\Sigma_3  & 3    & 7 
\end{array}.
$$
}
\item{When $X$ contains a quartic scroll $\Sigma_4$ or a quintic
del Pezzo surface $T$ then we have
$$K_{14}=\begin{array}{r|cc}
	& h^2 & \Sigma_4 \\ 
\hline 
h^2 & 3    & 4 \\
\Sigma_4  & 4    & 10 
\end{array} \simeq
\begin{array}{r|cc}
	& h^2 & T \\ 
\hline 
h^2 & 3    & 5 \\
T  & 5    & 13
\end{array},
 \quad T=3h^2-\Sigma_4.
$$
}
\end{enumerate}

We return to cubic fourfolds containing two disjoint planes $P_1$ and
$P_2$. Here we have a rank three lattice of algebraic classes,
containing a distinguished rank two lattice:
$$
\begin{array}{r|ccc}
	& h^2 & P_1 & P_2 \\ 
\hline 
h^2 & 3    & 1 & 1  \\
P_1  & 1 & 3 & 0 \\
P_2  & 1 & 0 & 3 
\end{array} \supset
\begin{array}{r|cc}
	& h^2 & \Sigma_4  \\ 
\hline 
h^2 & 3    & 4 \\
\Sigma_4  & 4    & 10
\end{array}, \quad \Sigma_4=2h^2 - P_1 - P_2.
$$

\subsection{Structural results}
\label{subsect:structural}
Voisin's Torelli Theorem and the geometric description of
the period domains yields a qualitative description
of the special cubic fourfolds. 

Consider cubic
fourfolds $X$ with a prescribed saturated sublattice 
$$h^2 \in M\subset L\simeq H^4(X,\bZ)$$
of algebraic classes. The Hodge-Riemann bilinear relations imply
that $M$ is positive definite. Then the Hodge structure on $X$ is 
determined by 
$H^1(X,\Omega^3_X) \subset M^{\perp}\otimes \bC,$
which is isotropic for $\left<,\right>$. The relevant period
domain is 
$$\cD_M=\{ [\lambda] \in \bP(M^{\perp}\otimes \bC): \left<\lambda,\lambda \right> =0 \},$$
or at least the connected component 
with the correct orientation. (See \cite[\S4]{MarkSurv}
for more discussion of orientations.)
The Torelli theorem \cite{Voisin86} asserts that the period map
$$\begin{array}{rcl}
\tau: \cC & \ra & \Gamma \backslash \cD_{\bZ h^2}\\
        X& \mapsto & H^1(X,\Omega^3_X)
\end{array}$$
is an open immersion; $\Gamma$ is the group of automorphisms
of the primitive cohomology lattice $L^0$ arising from the monodromy
of cubic fourfolds.
Cubic fourfolds with additional algebraic cycles,
indexed by a saturated sublattice
$$M\subsetneq M' \subset L,$$
correspond to the linear sections of $\cD_M$ of codimension 
$\operatorname{rank}(M'/M)$.

\begin{prop} \label{prop:rough}
Fix a positive definite lattice $M$ of rank $r$
admitting a saturated embedding
$$h^2 \in M \subset L.$$
If this exists then $M^0=\{h^2\}^{\perp} \subset M$
is necessarily even, as it embeds in $L^0$.

Let $\cC_M\subset \cC$ denote the cubic fourfolds $X$ admiting algebraic
classes with this lattice structure
$$ h^2 \in M \subset H^{2,2}(X,\bZ) \subset L.$$
Then $\cC_M$ has codimension $r-1$, provided it is non-empty.
\end{prop}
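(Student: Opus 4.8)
The plan is to handle the two assertions separately. The evenness of $M^0$ is immediate: orthogonal complements in a nondegenerate lattice are saturated, so $L^0=\{h^2\}^{\perp}$ is saturated in $L$; since $M$ is saturated in $L$ and $h^2\in M$, the quotient $M/(M\cap L^0)$ injects into the torsion-free group $L/L^0$, so $M^0=M\cap L^0$ is a sublattice of $L^0\simeq A_2\oplus U^{\oplus 2}\oplus E_8^{\oplus 2}$, which is even; hence $M^0$ is even. The codimension statement, by contrast, follows from Voisin's global Torelli theorem for cubic fourfolds together with the validity of the integral Hodge conjecture for them.

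The first point in the codimension argument is that membership in $\cC_M$ is a condition on the Hodge structure alone. Indeed, by definition $H^{2,2}(X,\bZ)=H^4(X,\bZ)\cap H^2(\Omega^2_X)$ is exactly the group of integral classes orthogonal to $H^1(X,\Omega^3_X)$, and, conversely, every such class is algebraic because the integral Hodge conjecture holds for cubic fourfolds \cite[Th.~1.4]{VoisinJAG}. Fixing a reference isometry $H^4(X,\bZ)\simeq L$ carrying the canonical class to $h^2$, we conclude that $X\in\cC_M$ precisely when the period point of $X$ is orthogonal to some $\Gamma$-translate $\gamma(M)$ of $M\subset L$, i.e. lies on one of the sub-period-domains $\cD_{\gamma(M)}$; these form a locally finite collection inside $\cD_{\bZ h^2}$. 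By Voisin's Torelli theorem \cite{Voisin86} the period map $\tau:\cC\to\Gamma\backslash\cD_{\bZ h^2}$ is an open immersion, so near any point of $\cC_M$ — and such points exist, by the non-emptiness hypothesis — the locus $\cC_M$ is identified with a finite union of local analytic branches, each isomorphic to an open subset of $\cD_M$. Hence $\operatorname{codim}_{\cC}(\cC_M)=\operatorname{codim}_{\cD_{\bZ h^2}}(\cD_M)$.

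It remains to compute this codimension, which is elementary linear algebra. Since $M$ is positive definite of rank $r$ and $L$ is unimodular of signature $(21,2)$, the orthogonal complement $M^{\perp}\subset L$ is nondegenerate of rank $23-r$ and signature $(21-r,2)$. Therefore $\cD_M=\{[\lambda]\in\bP(M^{\perp}\otimes\bC):\langle\lambda,\lambda\rangle=0\}$ — more precisely the connected component of the open subset where $\langle\lambda,\bar\lambda\rangle$ has the correct sign, which is non-empty exactly because the two negative directions survive in $M^{\perp}$, though this plays no role in the dimension — is a smooth quadric of dimension $(23-r)-2=21-r$. Taking $M=\bZ h^2$, i.e. $r=1$, recovers $\dim\cD_{\bZ h^2}=20=\dim\cC$, as it must; in general $\operatorname{codim}_{\cD_{\bZ h^2}}(\cD_M)=20-(21-r)=r-1$, so $\cC_M$ has codimension $r-1$ in $\cC$.

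I expect the only real subtlety to lie in the second paragraph — making the reduction to a statement about the period domain watertight. Two things require care: (i) one genuinely needs the integral Hodge conjecture for cubic fourfolds to know that ``$M$ consists of Hodge classes'' and ``$M$ consists of algebraic classes'' cut out the same locus, so that $\cC_M$ is pulled back from the quotient period domain; and (ii) one must check that passing to the monodromy quotient does not distort the local picture — a given point of $\cC_M$ may lie on several branches $\cD_{\gamma(M)}$, and there may be more than one $\Gamma$-orbit of saturated $h^2$-preserving embeddings of $M$ into $L$, but every branch is cut out of $\cD_{\bZ h^2}$ by the same number $\operatorname{rank}(M)-1$ of linear conditions (orthogonality to a basis of $M\cap L^0$), so the codimension of $\cC_M$ is unambiguous. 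Once this reduction is secured, the remaining lattice theory and quadric geometry are routine.
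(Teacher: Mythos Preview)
Your proof is correct and follows essentially the same route as the paper. The paper does not give a separate formal proof of this proposition; it is stated as a consequence of the preceding discussion of the Torelli theorem and the description of the period domain, where the key sentence is that cubic fourfolds with extra algebraic classes indexed by a saturated sublattice $M'\supset M$ correspond to linear sections of $\cD_M$ of codimension $\operatorname{rank}(M'/M)$. Your argument unpacks exactly this, with the added care of invoking the integral Hodge conjecture to justify that ``algebraic classes'' and ``Hodge classes'' define the same locus.
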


We can make this considerably more precise in rank two.
For each labelling $K$, pick a generator $K\cap L^0=\bZ v$.
Classifying orbits of primitive positive vectors $v\in L^0$
under the automorphisms of this lattice associated with the monodromy
representation yields:
\begin{theo}\label{theo:irred} \cite[\S3]{HaCM}
Let $\cC_d \subset \cC$ denote the special cubic fourfolds
admitting a labelling of discriminant $d$. Then $\cC_d$
is non-empty if and only if
$d\ge 8$ and $d\equiv 0,2 \pmod{6}$. Moreover, $\cC_d$ is an
irreducible divisor in $\cC$.
\end{theo}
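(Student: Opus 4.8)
The plan is to translate the problem into a lattice-theoretic question about the primitive cohomology lattice $L^0 \simeq A_2 \oplus U^{\oplus 2} \oplus E_8^{\oplus 2}$ and then invoke Voisin's Torelli theorem together with the surjectivity of the period map onto the appropriate period domain. A labelling $K$ of discriminant $d$ determines, via $K \cap L^0 = \bZ v$, a primitive vector $v \in L^0$ whose self-intersection $\langle v, v\rangle$ together with $\langle h^2, v\rangle$ encodes $d$; conversely such a $v$ reconstitutes $K = \langle h^2, v\rangle$. So the first step is to compute, in terms of $d$, the possible values of the pair $(\langle v,v\rangle, \langle h^2,v\rangle)$ for a primitive $v$, and to observe that since $v \in L^0$ is an even lattice, $\langle v,v\rangle$ is even; a short computation with the $2\times 2$ determinant then pins down $d \pmod 6$ by reduction modulo $3$ of $\langle h^2, v\rangle$ (note $h^4 = 3$), forcing $d \equiv 0, 2 \pmod 6$. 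Combined with positive-definiteness of $K$ and the requirement that $v$ be primitive and not proportional to $h^2$, one extracts the bound $d \ge 8$; the small excluded cases $d = 2, 6$ must be ruled out by hand as corresponding either to $\bZ h^2$ itself or to non-saturated or indefinite sublattices.

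The second, and main, step is the existence-and-irreducibility statement: for each admissible $d$, the vectors $v \in L^0$ giving a discriminant-$d$ labelling form a single orbit under the monodromy group $\Gamma$. Here I would invoke Eichler's theorem (or the Eichler--Wall criterion) on the transitivity of the orthogonal group of an even lattice containing $U^{\oplus 2}$ on primitive vectors of fixed length and fixed image in the discriminant group; the lattice $L^0$ contains $U^{\oplus 2}$, so its full orthogonal group acts transitively on primitive vectors $v$ of given $\langle v,v \rangle$ and given residue in $(L^0)^\vee/L^0$. The subtlety is that $\Gamma$ is not the full orthogonal group of $L^0$ but the monodromy subgroup; one must check, as in \cite[\S3]{HaCM}, that $\Gamma$ still acts transitively on the relevant orbit — equivalently that the monodromy group is large enough to realize all needed reflections or Eichler transvections. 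Voisin's description of $\Gamma$ as containing the group generated by reflections in classes of square $2$ (coming from nodal degenerations) suffices for this.

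Granting the orbit statement, irreducibility of $\cC_d$ follows formally: $\cC_d$ is the image under the open immersion $\tau$ of $\bigcup_v \Gamma\backslash (\cD_{\bZ h^2} \cap v^\perp)$, and transitivity of $\Gamma$ on the $v$'s collapses this union to the single irreducible analytic space $\Gamma_v \backslash \cD_{\langle h^2, v\rangle}$, where $\Gamma_v$ is the stabilizer; this is irreducible because $\cD_{\langle h^2,v\rangle}$ is (a connected component of) a Type IV domain, hence irreducible, and the quotient of an irreducible variety is irreducible. Non-emptiness for admissible $d$ comes from surjectivity of the period map onto $\Gamma\backslash\cD_{\bZ h^2}$ minus the discriminant locus: pick any $v$ in the orbit, a generic period point in $\cD_{\langle h^2,v\rangle}$ lies in the image and avoids the periods of singular cubics and the (countably many) deeper special loci, so it is $\tau(X)$ for an actual smooth cubic fourfold $X$ with the prescribed labelling. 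The codimension-one claim is Proposition~\ref{prop:rough} with $r = 2$. The step I expect to be the real obstacle is the transitivity of $\Gamma$ (as opposed to the full orthogonal group) on each length-orbit — i.e., controlling the monodromy group precisely enough — since everything else is either a bookkeeping congruence computation or a formal consequence of Torelli plus Eichler's lemma.
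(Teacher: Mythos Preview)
Your overall strategy---reduce to classifying orbits of primitive vectors $v\in L^0$ under monodromy, invoke an Eichler-type transitivity result, and conclude via Torelli---is exactly the paper's approach, and you have correctly identified the monodromy-versus-full-orthogonal-group issue as the nontrivial step.

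There is, however, a genuine error in your treatment of the bound $d\ge 8$. Your claim that the cases $d=2,6$ are excluded because they correspond ``either to $\bZ h^2$ itself or to non-saturated or indefinite sublattices'' is incorrect. The lattices
\[
K_2=\begin{pmatrix}3&1\\1&1\end{pmatrix},\qquad K_6=\begin{pmatrix}3&0\\0&2\end{pmatrix}
\]
(see the display (\ref{eqn:k26}) in the paper) are honest positive-definite, rank-two, saturated sublattices of $L$ containing $h^2$; nothing in the lattice picture rules them out. The exclusion is \emph{geometric}: by Laza's description of the image of the period map (building on Looijenga), $\tau(\cC)$ is precisely the complement in $\Gamma\backslash\cD_{\bZ h^2}$ of the two Heegner divisors of discriminants $2$ and $6$, which correspond to limiting Hodge structures of singular cubic hypersurfaces (symmetric determinantal cubics and cubics with an ordinary double point, respectively). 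Thus for $d=2,6$ the entire divisor $\cD_{K_d}$ lies outside the image of $\tau$, so $\cC_d=\emptyset$; for $d\ge 8$ the generic point of $\cD_{K_d}$ does lie in the image. Your later remark about ``avoiding the periods of singular cubics'' when proving non-emptiness is exactly the right mechanism---you just need to recognize that for $d=2,6$ nothing survives this avoidance, and that this (not any lattice degeneracy) is what forces $d\ge 8$.
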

Fix a discriminant
$d$ and consider the locus $\cC_d\subset \cC$.
The Torelli Theorem implies that
irreducible components of $\cC_d$ correspond to 
saturated rank two sublattices realizations
$$h^2 \in K \subset L$$
up to monodromy. The monodromy of cubic fourfolds acts on $L$
via $\Aut(L,h^2)$, the automorphisms of the lattice $L$
preserving $h^2$. Standard results on
embeddings of lattices imply there is a unique 
$K\subset L$ modulo $\Aut(L,h^2)$. The monodromy
group is an explicit finite index subgroup of $\Aut(L,h^2)$,
which still acts transitively on these sublattices.
Hence $\cC_d$ is irreducible.

The rank two lattices associated with labellings
of discrimiant $d$ are:
$$K_d := \begin{cases}
\begin{array}{r|cc}
		& h^2 & S \\
	\hline 
	h^2 & 3 & 1 \\
	S & 1 & 2n+1 \end{array}  & \text{ if } d=6n+2,n\ge 1\\
			& \\
\begin{array}{r|cc}
		& h^2 & S \\
	\hline 
	h^2 & 3 & 0 \\
	S & 0 & 2n \end{array}  & \text{ if } d=6n, n\ge 2.
\end{cases}
$$
The cases $d=2$ and $6$ 
\begin{equation} \label{eqn:k26}
K_2 = 
\begin{array}{r|cc}
		& h^2 & S \\
	\hline 
	h^2 & 3 & 1 \\
	S & 1 & 1 \end{array} \quad
K_6 = \begin{array}{r|cc}
		& h^2 & S \\
	\hline 
	h^2 & 3 & 0 \\
	S & 0 & 2 
\end{array} 
\end{equation}
correspond to limiting Hodge structures 
arising from singular cubic fourfolds: the symmetric
determinant cubic fourfolds \cite[\S4.4]{HaCM}
and the cubic fourfolds with an
ordinary double point \cite[\S4.2]{HaCM}.
The non-special cohomology lattice $K_d^{\perp}$ is also well-defined
for all $(X,K_d) \in \cC_d$. 

Laza \cite{LazaAnnals}, building on work of Looijenga \cite{Loo},
gives precise necessary
and sufficient conditions for when the
$\cC_M$ in 
Proposition~\ref{prop:rough} are nonempty:
\begin{itemize}
\item{$M$ is positive definite and admits
a saturated embedding
$h^2 \in M \subset L$;}
\item{there exists no sublattice 
$h^2 \in K \subset M$ with $K \simeq K_2$ or $K_6$ as in (\ref{eqn:k26}).}
\end{itemize}
Detailed descriptions of the possible lattices
of algebraic classes are given by Mayanskiy \cite{Maya}.
Furthermore, Laza obtains a characterization of the image of the
period map for cubic fourfolds: it is complement of the divisors 
parametrizing `special' Hodge structures with a labelling of discriminant
$2$ or $6$. 

\begin{rema}
Li and Zhang \cite{LZ} have found a beautiful generating series for the degrees
of special cubic fourfolds of discriminant $d$, expressed via modular forms.
\end{rema}
 
We have seen concrete descriptions of surfaces arising in special cubic
fourfolds for $d=8,12,14$. Nuer \cite[\S3]{Nuer}
writes down explicit smooth rational surfaces
arising in generic special cubic fourfolds of discriminants $d\le 38$.
These are blow-ups of the plane at points in general position,
embedded via explicit linear series, e.g.,
\begin{enumerate}
\item{For $d=18$, let $S$ be a generic projection into $\bP^5$ of a sextic
del Pezzo surface in $\bP^6$.}
\item{For $d=20$, let $S$ be a Veronese embedding of $\bP^2$.}
\end{enumerate}
\begin{ques} \label{prob:smoothrat}
Is the algebraic cohomology of a special cubic fourfold 
generated by the classes
of smooth rational surfaces?
\end{ques}
Voisin has shown that the cohomology can be generated by smooth surfaces
(see the proof of \cite[Th.~5.6]{VoisinJEMS}) {\em or} by possibly singular
rational surfaces \cite{VoisinJJM}. 
Low discriminant examples suggest we might be able to achieve both.

A by-product of Nuer's approach, where it applies, 
is to prove that the $\cC_d$ are unirational.
However, for $d\gg 0$ the loci $\cC_d$ are of
general type \cite{TVA}. So a different approach is needed in general.

\subsection{Census of rational cubic fourfolds}
Using this framework, we enumerate the smooth cubic fourfolds known to be
rational:
\begin{enumerate}
\item{cubic fourfolds in $\cC_{14}$, the closure of the Pfaffian
locus;}
\item{cubic fourfolds in $\cC_8$, the locus containing a plane $P$,
such that there exists a class $W$ such that $\left<W, (h^2-P)\right>$ is
odd.}
\end{enumerate}
For the second case, note that the discriminant of the lattice
$M=\bZ h^2 + \bZ P + \bZ W$ has the same parity as 
$\left<W,(h^2-P)\right>$. 

Thus all the cubic fourfolds proven to be rational are parametrized by
one divisor $\cC_{14}$ and a countably-infinite union of codimension two
subvarieties $\cC_M \subset \cC_8$.

\begin{ques} 
Is there a rational (smooth) cubic fourfold not in the enumeration above?
\end{ques}

There are conjectural frameworks
(see \S\ref{subsect:KuzRema} and also \S\ref{subsect:GS})
predicting that many special cubic fourfolds should
be rational. However, few new examples of cubic
fourfolds have been found to support these frameworks.

\section{Associated K3 surfaces}

\subsection{Motivation}
\label{subsect:motivation}
The motivation for considering associated K3 surfaces comes from the 
Clemens-Griffiths \cite{CG} proof of the irrationality of cubic threefolds.
Suppose $X$ is a rational threefold. Then we have an
isomorphism of polarized Hodge structures
$$H^3(X,\bZ)=\oplus_{i=1}^n H^1(C_i,\bZ)(-1)$$
where the $C_i$ are smooth projective curves.
Essentially, the $C_i$ are blown up in the birational map
$$\bP^3 \dashrightarrow X.$$

If $X$ is a rational {\em fourfold} then we can look for the cohomology
of surfaces blown up in the birational map
$$\rho: \bP^4 \dashrightarrow X.$$
Precisely, if $P$ is a smooth projective fourfold, $S\subset P$
an embedded surface, and $\tilde{P}=\Bl_S(P)$ then we have
\cite[\S6.7]{Fulton}
$$H^4(\tilde{P},\bZ)\simeq H^4(P,\bZ) \oplus_{\perp} H^2(S,\bZ)(-1).$$
The homomorphism $H^4(P,\bZ) \ra H^4(\tilde{P},\bZ)$ is induced by
pull-back; the homomorphism $H^2(S,\bZ)(-1) \ra H^4(\tilde{P},\bZ)$
comes from the composition of pull-back and push-forward
$$\xymatrix{
E=\bP(N_{S/P}) \ar@{^(->}[r] \ar[d] & \tilde{P} \\
S              &		 
}
$$
where $N_{S/P}$ is the normal bundle and $E$ the exceptional divisor.
Blowing up points in $P$ contributes Hodge-Tate summands
$\bZ(-2)$ with negative self-intersection to its middle cohomology;
these have the same affect as blowing up a surface (like $\bP^2$)
with $H^2(S,\bZ)\simeq \bZ$.
Blowing up curves does not affect middle cohomology.

Applying the Weak Factorization Theorem \cite{Wlo,AKMW}---that every
birational map is a composition of blow-ups and blow-downs 
along smooth centers---we
obtain the following:
\begin{prop}
Suppose $X$ is a rational fourfold. 
Then there exist smooth projective surfaces
$S_1,\ldots,S_n$ and $T_1,\ldots,T_m$ such that we have an isomorphism
of Hodge structures
$$H^4(X,\bZ) \oplus (\oplus_{j=1,\ldots,m} H^2(T_j,\bZ)(-1)) \simeq 
\oplus_{i=1,\ldots,n} H^2(S_i,\bZ)(-1).$$
\end{prop}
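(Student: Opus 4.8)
The plan is to combine the Weak Factorization Theorem with the formula for how the middle cohomology of a smooth fourfold changes under a single blow-up along a smooth center, and then to track what happens along a chain of such blow-ups and blow-downs connecting $\bP^4$ to $X$. First I would invoke the Weak Factorization Theorem \cite{Wlo,AKMW}: since $X$ is rational, the birational map $\rho\colon \bP^4 \dashrightarrow X$ factors as a finite sequence of birational maps
$$
\bP^4 = Y_0 \dashrightarrow Y_1 \dashrightarrow \cdots \dashrightarrow Y_N = X,
$$
where each $Y_{k} \dashrightarrow Y_{k+1}$ is either a blow-up of $Y_{k+1}$ along a smooth center, or the inverse of such a blow-up, and every $Y_k$ is smooth and projective (and we may take them projective, not merely complete, or else pass to projective models). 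The key input is the blow-up formula \cite[\S6.7]{Fulton} recalled just above: if $P$ is a smooth projective fourfold and $Z\subset P$ is a smooth center, then $\Bl_Z(P)$ has middle cohomology
$$
H^4(\Bl_Z(P),\bZ) \simeq H^4(P,\bZ) \oplus_{\perp} \left( \bigoplus_{j} H^2(T_j,\bZ)(-1) \right),
$$
where the correction term is: nothing if $Z$ is a curve or a point set of even... more precisely, $H^2(\mathrm{pt})\simeq\bZ$ contributing a Hodge--Tate summand $\bZ(-2)$ for each blown-up point, $H^2(S,\bZ)(-1)$ for a blown-up surface $S$, and no contribution at all for a blown-up curve; in all cases the extra summand is of the form $H^2(T,\bZ)(-1)$ for a smooth projective surface $T$ (take $T = \bP^2$ in the point case, as noted in the text). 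Crucially this is an isomorphism of \emph{polarized Hodge structures}, and it is orthogonal with respect to the intersection form.

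Next I would run the induction along the chain. Maintain the inductive hypothesis that there exist smooth projective surfaces with an isomorphism of Hodge structures
$$
H^4(Y_k,\bZ) \oplus \left(\bigoplus_j H^2(T_j,\bZ)(-1)\right) \simeq H^4(\bP^4,\bZ) \oplus \left(\bigoplus_i H^2(S_i,\bZ)(-1)\right).
$$
When passing from $Y_k$ to $Y_{k+1}$ via a blow-up in one direction, apply the blow-up formula to move the new correction term $\bigoplus H^2(T'_\ell,\bZ)(-1)$ to whichever side keeps the relation in the displayed form: if $Y_k = \Bl_Z(Y_{k+1})$, then $H^4(Y_k,\bZ) = H^4(Y_{k+1},\bZ)\oplus(\bigoplus H^2(T'_\ell,\bZ)(-1))$, so we enlarge the collection of $T_j$'s on the left-hand side by one step's worth; if instead $Y_{k+1} = \Bl_Z(Y_k)$, then $H^4(Y_{k+1},\bZ) = H^4(Y_k,\bZ)\oplus(\bigoplus H^2(T'_\ell,\bZ)(-1))$, and substituting into the inductive hypothesis moves the new term over to the side with the $S_i$'s, so we enlarge the collection of $S_i$'s. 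In either case the updated relation has exactly the required shape, and after $N$ steps we reach $Y_N = X$, at which point $H^4(\bP^4,\bZ)\simeq \bZ(-2)$ is itself Hodge--Tate and can be absorbed into the right-hand sum by adding one more copy of $\bP^2$ to the $S_i$'s (or, equivalently, noting that $\bZ(-2) \simeq H^2(\bP^2,\bZ)(-1)$ modulo its algebraicity, which is all that is used). This yields the claimed isomorphism
$$
H^4(X,\bZ) \oplus \left(\bigoplus_{j=1,\ldots,m} H^2(T_j,\bZ)(-1)\right) \simeq \bigoplus_{i=1,\ldots,n} H^2(S_i,\bZ)(-1).
$$

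The main obstacle, and the point deserving the most care, is the bookkeeping of \emph{signs and polarizations}, i.e. verifying that the blow-up formula is an \emph{orthogonal} direct sum decomposition of polarized Hodge structures with the correct Tate twist, and that the correction terms $H^2(T,\bZ)(-1)$ all sit with the appropriate sign of self-intersection (the exceptional classes contribute negatively), so that the final relation is genuinely an isomorphism of Hodge structures and not merely of rational Hodge structures or of underlying lattices. One must also be slightly careful that the centers blown up in the Weak Factorization Theorem are of dimension at most $2$ inside a fourfold — which is automatic, since a smooth proper subvariety of a smooth fourfold that is a legitimate blow-up center along a birational modification has codimension at least $2$, hence dimension at most $2$ — so that the only possible contributions are from points, curves, and surfaces, exactly the cases catalogued above. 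Beyond this, the argument is a routine induction, and no nontrivial geometry of cubic fourfolds enters: the statement holds for any rational smooth projective fourfold.
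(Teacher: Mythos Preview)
Your proposal is correct and follows exactly the approach the paper uses: invoke the Weak Factorization Theorem to write the birational map as a chain of blow-ups and blow-downs along smooth centers, apply the blow-up formula $H^4(\Bl_Z P,\bZ)\simeq H^4(P,\bZ)\oplus_{\perp} H^2(Z,\bZ)(-1)$ for surface centers (with points contributing $\bZ(-2)\simeq H^2(\bP^2,\bZ)(-1)$ and curves contributing nothing), and bookkeep the resulting surface summands on the two sides. Your write-up is in fact more explicit about the induction and the absorption of $H^4(\bP^4,\bZ)$ than the paper, which simply states the proposition after the preparatory discussion.
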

Unfortunately, it is not clear how to extract a computable
invariant from this observation; but see \cite{ABvB,Kul} for
work in this direction.

\subsection{Definitions and reduction to lattice
equivalences}
In light of examples illustrating how rational cubic fourfolds
tend to be entangled with K3 surfaces, it is natural to explore
this connection on the level of Hodge structures.

Let $(X,K)$ denote a labelled special cubic fourfold.
A polarized K3 surface $(S,f)$ is {\em associated} 
with $(X,K)$ if there exists an isomorphism of lattices
$$H^4(X,\bZ)\supset 
K^{\perp} \stackrel{\sim}{\lra} f^{\perp} \subset H^2(S,\bZ)(-1)$$
respecting Hodge structures.

\begin{exam}[Pfaffians]
Let $X$ be a Pfaffian cubic fourfold (see \S\ref{subsect:Pfaff}) and 
$$K_{14}=\begin{array}{r|cc}
	& h^2 & \Sigma_4 \\ 
\hline 
h^2 & 3    & 4 \\
\Sigma_4  & 4    & 10 
\end{array} \simeq
\begin{array}{r|cc}
	& h^2 & T \\ 
\hline 
h^2 & 3    & 5 \\
T  & 5    & 13
\end{array},
$$
the lattice containing the classes of the resulting
quartic scrolls and quintic del Pezzo
surfaces.
Let $(S,f)$ be the K3 surface of degree $14$
arising from the Pfaffian construction.
Then $(S,f)$ is associated with $(X,K_{14})$.
\end{exam}

\begin{exam}[A suggestive non-example]
As we saw in \S\ref{subsect:Pfaff}, a cubic fourfold $X$ containing a plane $P$
gives rise to a degree two K3 surface $(S,f)$. However, this
is {\em not} generally associated with the cubic fourfold. If $K_8\subset H^4(X,\bZ)$
is the labelling then
$$K_8^{\perp} \subset f^{\perp} $$
as an index two sublattice \cite[\S9.7]{vanGeemen}. However,
when the quadric bundle $q:\Bl_P(X)\ra \bP^2$ admits a section
(so that $X$ is rational),
$S$ often admits a polarization $g$ such that
$(S,g)$ is associated with some labelling of $X$.
See \cite{Kuz08,Kuz15}
for further discussion.
\end{exam}

\begin{prop}
The existence of an associated K3 surface depends only
on the discriminant of the rank two lattice $K$.
\end{prop}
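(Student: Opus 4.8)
The plan is to reduce the existence of an associated K3 surface to a purely lattice-theoretic statement about the non-special lattice $K^\perp$, and then invoke the surjectivity of the period map for polarized K3 surfaces together with the Torelli theorem. First I would fix a labelled special cubic fourfold $(X,K)$ of discriminant $d$ and analyze the lattice $K^\perp \subset L = H^4(X,\bZ)$. Since $K$ is a saturated rank two sublattice containing $h^2$, and $L$ is unimodular, $K^\perp$ is a lattice of rank $21$ and signature $(19,2)$ whose discriminant group and discriminant form are determined (up to isomorphism) by those of $K$, hence by $d$ alone; this uses the standard fact that for a primitive embedding into a unimodular lattice the orthogonal complement has discriminant form the negative of that of the sublattice. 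Because $K$ itself is positive definite of rank two with $h^2\cdot h^2 = 3$, its isomorphism type — and therefore that of $K^\perp$ — is pinned down by $d$ (this is essentially the content of Theorem~\ref{theo:irred} and the explicit tables for $K_d$). The key point is that the Hodge structure on $K^\perp \otimes \bC$, namely the line $H^1(X,\Omega^3_X)$, together with the abstract lattice $K^\perp$, is exactly the data of a weight-two Hodge structure of K3 type with a primitive sublattice of signature $(1,0)$ removed.

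Next I would compare $K^\perp$, up to a Tate twist and sign, with the primitive cohomology lattice $f^\perp \subset H^2(S,\bZ)$ of a polarized K3 surface $(S,f)$ of degree $2n$, which is the well-known lattice $\langle -2n\rangle^\perp$ inside the K3 lattice $U^{\oplus 3}\oplus E_8(-1)^{\oplus 2}$. The arithmetic heart of the argument is to show that $K^\perp(-1)$ is isomorphic, as an abstract lattice, to $f^\perp$ for an appropriate choice of $n$ depending only on $d$ — precisely $d = 2n$ or $d$ matching via the congruence conditions $d\equiv 0,2\pmod 6$. This is a finite computation with discriminant forms: both lattices have signature $(2,19)$, both have discriminant group of order $d$, and one checks the discriminant forms agree, so Nikulin's uniqueness theorem for even lattices of this signature and rank forces an isometry. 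Given such an isometry, transporting the Hodge structure of $X$ across it produces a weight-two Hodge structure of K3 type on the abstract K3 lattice with the chosen polarization class $f$ of type $(1,1)$; by surjectivity of the period map for polarized K3 surfaces there is a genuine polarized K3 surface $(S,f)$ realizing it, and the resulting isometry $K^\perp \xrightarrow{\sim} f^\perp$ respects Hodge structures by construction. Conversely, if $(S,f)$ is associated to $(X,K)$ then the discriminant of $K$ is read off from $f^\perp$, hence depends only on that of $K$; this gives the "depends only on" in both directions.

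The main obstacle — and the step requiring genuine care rather than routine bookkeeping — is the discriminant-form matching: one must verify that the discriminant form of $K^\perp$ (equivalently, of $K$) is exactly that of $\langle -d\rangle^\perp$ in the K3 lattice, and in particular that this is insensitive to which of the (finitely many, but possibly more than one) isomorphism types of rank two lattice $K$ of discriminant $d$ one picks — this is where the hypothesis that $K$ is a sublattice of the specific unimodular lattice $L$ containing the fixed vector $h^2$ does real work, since it is the ambient embedding that rigidifies $K$. I would expect the cases $d\equiv 2\pmod 6$ and $d\equiv 0\pmod 6$ to behave slightly differently (reflecting the two shapes of $K_d$ in the table above), and one should double-check the low cases $d=8,14$ against the known Pfaffian and quadric-bundle examples as a sanity test. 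There is also a mild subtlety about connected components of the period domain and orientations, but since both $\cC_d$ and the relevant moduli space of polarized K3 surfaces are irreducible, the orientation issue can be handled exactly as in \cite[\S4]{MarkSurv}, and I would simply cite that.
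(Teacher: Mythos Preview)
Your overall framework matches the paper's: reduce to the lattice question of whether $K^\perp$ is isometric (after a sign twist) to the primitive cohomology lattice $\Lambda_d$ of a degree-$d$ polarized K3 surface, observe that $K^\perp$ depends only on $d$, and invoke surjectivity of the period map. But there is a genuine gap at your discriminant-form step. You assert that ``one checks the discriminant forms agree,'' but in fact they do \emph{not} always agree: the isometry $K_d^\perp \simeq \Lambda_d$ exists only when $d$ is not divisible by $4$, $9$, or any odd prime $\equiv 2\pmod 3$ (what the paper calls the \emph{admissible} condition). The proposition claims only that existence of an associated K3 surface \emph{depends} on $d$, not that one always exists; the correct argument is to show that existence is equivalent to the purely lattice-theoretic condition $K_d^\perp \simeq \Lambda_d$, which manifestly depends only on $d$, and to defer the question of \emph{when} this holds to a separate statement. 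Your attempt to establish the isometry unconditionally would, if it succeeded, prove strictly more than the proposition --- and it cannot succeed, since for instance a generic cubic fourfold containing a plane ($d=8$) has no associated K3 surface.

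There is a second, smaller gap. Surjectivity of the period map produces a K3 surface $S$ and a divisor class $f$ realizing the transported Hodge structure, but you must still show that $f$ can be taken to be a \emph{polarization}, i.e., rule out $(-2)$-curves orthogonal to $f$. The paper handles this by observing that any such $(-2)$-class, transported back to $H^4(X,\bZ)$ via the lattice isometry, yields an algebraic class $R$ with $\langle R,R\rangle = 2$ and $\langle h^2,R\rangle = 0$, hence a labelling of type $K_6$; but $K_6$ labellings arise only from nodal cubic fourfolds, contradicting the smoothness of $X$. This step is not subsumed by ``surjectivity of the period map for polarized K3 surfaces'' and needs to be made explicit.
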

Here is an outline of the proof; we refer to
\cite[\S5]{HaCM} for details. 

Recall the discussion of Theorem~\ref{theo:irred}
in \S\ref{subsect:structural}: For each 
discriminant $d\equiv 0,2\pmod{6}$ with $d>6$, 
there exists a lattice $K_d^{\perp}$
such that each special cubic fourfold of discriminant $d$
$(X,K)$ has $K^{\perp}\simeq K_d^{\perp}$.
Consider the primitive
cohomology lattice
$$\Lambda_d:=f^{\perp} \subset H^2(S,\bZ)(-1)$$
for a polarized K3 surface $(S,f)$ of degree $d$. The moduli
space $\cN_d$ of such surfaces is connected, so $\Lambda_d$ is
well-defined up to isomorphism. 

We claim $(X,K_d)$ admits an associated K3 surface if and only
if there exists an isomorphism of lattices
$$\iota: K_d^{\perp} \stackrel{\sim}{\lra}
\Lambda_d.$$
This is clearly a necessary condition. For sufficiency, 
given a Hodge structure on $\Lambda_d$ surjectivity of the Torelli map
for K3 surfaces \cite{Siu} ensures there exists a K3 surface $S$ and a
divisor $f$ with that Hodge structure. It remains
to show that $f$ can be taken to be a polarization of $S$,
i.e., there are no $(-2)$-curves orthogonal to $f$.
After twisting and applying $\iota$, any such curve yields
an algebraic class $R\in H^{2,2}(X,\bZ)$ with $\left<R,R\right>=2$
and $\left<h^2,R\right>=0$. In other words, we obtain a labelling
$$K_6=\left<h^2,R\right> \subset H^4(X,\bZ).$$
Such labellings are associated with nodal cubic fourfolds,
violating the smoothness of $X$.

Based on this discussion, it only remains to characterize
when the lattice isomorphism exists. Nikulin's theory
of discriminant forms \cite{Nik} yields:
\begin{prop} \cite[Prop.~5.1.4]{HaCM}:
Let $d$ be a positive integer congruent to $0$ or $2$ modulo $6$.
Then there exists an isomorphism
$$\iota: K_d^{\perp} \stackrel{\sim}{\lra}
\Lambda_d(-1)$$
if and only if $d$ is not divisible by $4,9$ or any odd
prime congruent to $2$ modulo $3$.
\end{prop}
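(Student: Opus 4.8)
This is a lattice-theoretic computation, so I would prove it by explicitly identifying both lattices and comparing their discriminant forms via Nikulin's theory. First I would compute $K_d^\perp$ inside $L \simeq (+1)^{\oplus 21} \oplus (-1)^{\oplus 2}$. Since $K_d \subset L$ is the rank-two primitive sublattice described just above (with Gram matrix $\left(\begin{smallmatrix} 3 & 1 \\ 1 & 2n+1 \end{smallmatrix}\right)$ when $d = 6n+2$, and $\left(\begin{smallmatrix} 3 & 0 \\ 0 & 2n \end{smallmatrix}\right)$ when $d=6n$), its orthogonal complement $K_d^\perp$ has signature $(19,2)$ and discriminant group of order $d$. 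Because $L$ is unimodular, Nikulin's theory gives $\operatorname{disc}(K_d^\perp) \simeq -\operatorname{disc}(K_d)$, and one computes the discriminant form of $K_d$ directly from its Gram matrix. Similarly I would compute $\Lambda_d = f^\perp \subset H^2(S,\bZ)$ where $H^2(S,\bZ) \simeq U^{\oplus 3} \oplus E_8(-1)^{\oplus 2}$ and $f^2 = d$; here $\Lambda_d$ has signature $(19,2)$ and its discriminant form is $\bZ/d$ generated by an element of square $-\tfrac{1}{d} \bmod 2\bZ$ (the standard computation). Twisting by $(-1)$ flips signature to $(2,19)$ — wait, I should be careful: the statement writes $\Lambda_d(-1)$ precisely to match signatures, so after the twist both sides are genuine candidates to be isomorphic.

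\emph{Step two: apply Nikulin's criterion.} Two even lattices of the same signature and the same discriminant form are isomorphic provided the signature is indefinite and the rank is large relative to the length of the discriminant group (Nikulin's uniqueness theorem, e.g. when $\operatorname{rank} \geq \ell(\operatorname{disc}) + 2$, which holds comfortably here since the rank is $21$ and $\ell(\bZ/d) \leq 2$ or so). So the isomorphism $\iota$ exists \emph{if and only if} the discriminant forms of $K_d^\perp$ and $\Lambda_d(-1)$ agree as finite quadratic forms on $\bZ/d$. Both are cyclic of order $d$, so the question reduces to: when is the quadratic form $x \mapsto a x^2 / d \bmod 2\bZ$ on $\bZ/d$ isomorphic to $x \mapsto b x^2/d \bmod 2\bZ$, where $a$ comes from the cubic-fourfold side and $b$ from the K3 side? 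This is controlled by whether $a/b$ (or rather the relevant ratio accounting for the $2$-adic subtlety) is a square modulo the relevant primes.

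\emph{Step three: the number-theoretic bookkeeping.} Decompose $\bZ/d$ into its $p$-primary components and compare the forms prime by prime using the classification of finite quadratic forms over $\bZ_p$ (Nikulin, \cite{Nik}; also Conway–Sloane). For an odd prime $p \mid d$ with $p \nmid d/p$ the form on the $p$-part is $\langle a_p/p \rangle$ versus $\langle b_p/p\rangle$, and these are isomorphic iff $a_p b_p$ is a square mod $p$; chasing through the explicit values of $a_p, b_p$ shows the obstruction is exactly that $-1$ (up to known factors) not be a square mod $p$, i.e. $p \equiv 2 \pmod 3$ is forbidden — this is where the "odd prime congruent to $2$ mod $3$" clause comes from. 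For $p = 3$ one checks the forms always match (consistent with $d \equiv 0, 2 \bmod 6$ already being imposed). For $p = 2$ one has to be more careful: the $2$-adic quadratic forms of the two sides differ when $4 \mid d$, giving the "$4 \nmid d$" clause; and squarefree-type conditions at odd primes force the "$9 \nmid d$" clause (a prime appearing to the second power changes the form on that component). I would organize this as a case analysis on $v_p(d)$ for each $p$.

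\emph{Main obstacle.} The genuine difficulty is the $2$-adic analysis: finite quadratic forms on $2$-groups are not classified by order and a single discriminant, so comparing the $2$-parts of $K_d^\perp$ and $\Lambda_d(-1)$ requires carefully tracking the oddity and the type (I vs II) of each Jordan block, and this is exactly what produces the asymmetric-looking condition $4 \nmid d$. The odd-prime part is comparatively mechanical (it reduces to a Legendre-symbol computation and the observation that $\left(\tfrac{-3}{p}\right) = 1 \iff p \equiv 1 \bmod 3$), but it still requires setting up the discriminant-form generators on both sides with the correct normalization so that the ratio one must test for squareness is computed correctly. I would therefore spend most of the effort pinning down $\operatorname{disc}(K_d^\perp)$ and $\operatorname{disc}(\Lambda_d(-1))$ as \emph{explicit} quadratic forms — once those are in hand, invoking \cite{Nik} and doing the prime-by-prime comparison is routine, and the stated divisibility conditions fall out.
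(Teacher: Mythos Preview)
Your approach via discriminant forms and Nikulin's uniqueness theorem is exactly what the paper intends: the paper simply records that ``Nikulin's theory of discriminant forms \cite{Nik} yields'' the proposition, citing \cite{HaCM} for the computation, so your outline matches.

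One technical point needs adjustment. The ambient lattice $L$ is \emph{odd} unimodular and $K_d$ is itself odd (the diagonal entry $\langle h^2,h^2\rangle=3$), so $K_d$ carries no $\bQ/2\bZ$-valued discriminant quadratic form and the shortcut $q_{K_d^{\perp}}\simeq -q_{K_d}$ is not available as written; only the discriminant \emph{bilinear} forms are identified this way. You must compute $q_{K_d^{\perp}}$ by other means, for instance by working inside the even lattice $L^0$. This also sharpens your explanation of the $9\nmid d$ clause: when $d=6n$ the lattice $K_d$ splits orthogonally as $\langle 3\rangle\oplus\langle 2n\rangle$, so as an abelian group $d(K_d^{\perp})\cong d(K_d)\cong \bZ/3\times\bZ/2n$, which is cyclic if and only if $3\nmid n$, i.e.\ if and only if $9\nmid d$. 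Since $d(\Lambda_d)\cong\bZ/d$ is always cyclic, this is the obstruction at the prime $3$---it comes from the $3$ in $\langle h^2,h^2\rangle$, not from a generic squarefreeness principle. With that correction in hand, your prime-by-prime comparison (and the invocation of Nikulin's uniqueness in rank $21$) goes through.
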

\begin{defi}
An even integer $d>0$ is {\em admissible} if it is not divisble
by $4,9$ or
any odd prime congruent to $2$ modulo $3$.
\end{defi}
Thus we obtain:
\begin{theo} \label{theo:K3assoc}
A special cubic fourfold $(X,K_d)$ admits an associated K3
surface if and only if $d$ is admissible.
\end{theo}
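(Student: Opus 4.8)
The plan is to chain together the reductions already set up in the excerpt. By the discussion preceding Theorem~\ref{theo:irred}, every special cubic fourfold of discriminant $d$ (with $d\equiv 0,2\pmod 6$, $d>6$) has non-special cohomology lattice $K^\perp\cong K_d^\perp$, a fixed lattice depending only on $d$; and by the paragraph following the ``suggestive non-example'' the existence of an associated K3 surface $(S,f)$ for $(X,K_d)$ is equivalent to the existence of a Hodge-respecting isometry $K_d^\perp \cong f^\perp \subset H^2(S,\bZ)(-1)$ with $(S,f)$ an \emph{honest} polarized K3 surface. The first step is thus to reduce the Hodge-theoretic statement to the purely lattice-theoretic one: an associated K3 exists for some (equivalently every) $(X,K_d)\in\cC_d$ if and only if there is an abstract isometry $\iota\colon K_d^\perp \xrightarrow{\sim} \Lambda_d(-1)$, where $\Lambda_d = f^\perp\subset H^2(S,\bZ)$ is the primitive lattice of a degree-$d$ K3.

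For that reduction, necessity is immediate. For sufficiency one transports a given Hodge structure on $K_d^\perp$ (arising from a cubic fourfold) via $\iota$ to one on $\Lambda_d(-1)$, appeals to surjectivity of the Torelli map for K3 surfaces to produce a K3 surface $S$ with a class $f$ of degree $d$ realizing it, and then must rule out $(-2)$-classes orthogonal to $f$, since such classes would obstruct $f$ from being a genuine polarization. The point is that a $(-2)$-class orthogonal to $f$ pulls back, under $\iota^{-1}$ and twisting, to an algebraic class $R\in H^{2,2}(X,\bZ)$ with $\langle R,R\rangle = 2$ and $\langle h^2, R\rangle = 0$, i.e.\ to a labelling of type $K_6$; but labellings of type $K_2$ or $K_6$ occur only as limiting Hodge structures of singular (nodal or symmetric determinantal) cubic fourfolds, contradicting smoothness of $X$. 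This is exactly the argument sketched around equation~(\ref{eqn:k26}), and I would invoke it essentially verbatim.

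The remaining, and genuinely substantive, step is the lattice computation: deciding for which $d$ the isometry $\iota\colon K_d^\perp \xrightarrow{\sim} \Lambda_d(-1)$ exists. Both are even lattices of the same signature $(2,19)$, so by Nikulin's theory \cite{Nik} the existence of an isometry between them is governed by their discriminant forms: two even lattices in the same genus with $\ell(A_L)\le \operatorname{rk}(L)-2$ and matching signature mod $8$ are isometric, and for indefinite lattices of large enough rank the genus is determined by signature together with discriminant form. So I would compute the discriminant quadratic form of $K_d^\perp$ (from the explicit Gram matrices $K_d$ given in \S\ref{subsect:structural}, using that $K_d^\perp$ is the orthogonal complement of $K_d$ inside the unimodular lattice $L$, so that $q_{K_d^\perp}\cong -q_{K_d}$) and that of $\Lambda_d(-1)$ (where $\Lambda_d = \langle -d\rangle \oplus U^{\oplus 2}\oplus E_8^{\oplus 2}$, so its discriminant form is $\bZ/d$ with quadratic form $x\mapsto -x^2/d$), and compare. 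The outcome — cited as \cite[Prop.~5.1.4]{HaCM} — is that the forms agree exactly when $d$ is not divisible by $4$, by $9$, or by any odd prime $p\equiv 2\pmod 3$; the obstructions come from the $2$-adic and $3$-adic components and from primes where $-3$ is a non-residue. Packaging ``$d\equiv 0,2\pmod 6$ and not divisible by $4,9$, or any odd prime $\equiv 2\pmod 3$'' into the single word \emph{admissible}, and noting that admissibility already forces $d\equiv 0,2\pmod 6$ and $d\ge 8$, yields the theorem. The main obstacle is this discriminant-form bookkeeping: it is the only place real content enters, and getting the divisibility conditions exactly right (especially the interplay of the mod-$4$ and mod-$9$ conditions with the mod-$6$ constraint coming from Theorem~\ref{theo:irred}) requires care, though it is a finite local computation prime by prime.
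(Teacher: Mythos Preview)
Your proposal is correct and follows exactly the paper's own approach: reduce to a lattice-theoretic statement via surjectivity of the Torelli map for K3 surfaces, rule out $(-2)$-classes orthogonal to $f$ by showing they would produce a $K_6$-labelling contradicting smoothness, and then invoke the Nikulin discriminant-form computation \cite[Prop.~5.1.4]{HaCM} to characterize when $K_d^\perp \simeq \Lambda_d(-1)$. One tiny quibble: admissibility as defined does not by itself force $d\ge 8$ (both $d=2$ and $d=6$ are admissible), but this is irrelevant since the hypothesis already supplies a special cubic fourfold, hence $d\ge 8$ by Theorem~\ref{theo:irred}.
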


\subsection{Connections with Kuznetsov's philosophy}
\label{subsect:KuzRema}
Kuznetsov has proposed a criterion for rationality expressed 
via derived categories \cite[Conj.~1.1]{Kuz08} \cite{Kuz15}:
Let $X$ be a cubic fourfold, $\cD^b(X)$ the
bounded derived category of coherent sheaves on $X$,
and $\cA_X$ the subcategory orthogonal to the exceptional
collection
$\{ \cO_X,\cO_X(1),\cO_X(2) \}$.
Kuznetsov proposes that $X$ is rational if and only if
$\cA_X$ is equivalent to the derived category of a 
K3 surface. He verifies this for the known examples
of rational cubic fourfolds.

Addington and Thomas \cite[Th.~1.1]{AdTh}
show that the generic $(X,K_d)\in \cC_d$ satisfies
Kuznetsov's derived category condition precisely
when $d$ is admissible. In \S\ref{subsect:AT}
we present some of the geometry behind this result. 
Thus we find:
\begin{quote}
Kuznetsov's conjecture would imply that 
the generic $(X,K_d) \in \cC_d$ for admissible
$d$ is rational.
\end{quote}
In particular, special cubic fourfolds of discriminants
$d=26, 38, 42, \ldots$ would all be rational!

\subsection{Naturality of associated K3 surfaces}
There are {\em a priori} many choices for the 
lattice isomorphism $\iota$ so a given cubic fourfold
could admit several associated K3 surfaces. Here we will
analyze this more precisely. Throughout, assume that $d$ 
is admissible.

We require a couple variations on $\cC_d \subset \cC$:
\begin{itemize}
\item{Let $\cC_d^{\nu}$ denote labelled cubic fourfolds,
with a saturated lattice $K\ni h^2$ 
of algebraic classes of rank two and discriminant
$d$.}
\item{Let $\cC_d'$ denote pairs consisting of a
cubic fourfold $X$ and a saturated embedding of $K_d$ into
the algebraic cohomology.}
\end{itemize}
We have natural maps
$$\cC_d' \ra \cC_d^{\nu} \ra \cC_d.$$
The second arrow is normalization
over cubic fourfolds admitting multiple labellings of discriminant
$d$. To analyze the first arrow, note that the $K_d$ admits
non-trivial automorphisms fixing $h^2$ if and only if $6|d$.
Thus the first arrow is necessarily an isomorphism unless
$6|d$. When $6|d$ the lattice $K_d$ admits an automorphism
acting by multiplication by $-1$ on the orthogonal 
complement of $h^2$. $\cC'_d$ is irreducible if this involution can be
realized in the monodromy group. An analysis of the monodromy group
gives:

\begin{prop} \cite[\S5]{HaCM}:
For each admissible $d>6$, 
$\cC_d'$ is irreducible and admits an open immersion
into the moduli space $\cN_d$ of polarized K3 surfaces of degree $d$.
\end{prop}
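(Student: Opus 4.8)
The plan is to establish two things: that $\cC_d'$ is irreducible, and that the period map to $\cN_d$ is an open immersion. For irreducibility, I would build on the discussion following Theorem~\ref{theo:irred}. By the Torelli theorem of Voisin, the locus $\cC_d'$ is cut out in the period domain by a linear section, and its components correspond to orbits, under the monodromy group $\Gamma \subset \Aut(L,h^2)$, of \emph{framed} saturated rank two sublattices $K_d \hookrightarrow L$ containing $h^2$ — that is, sublattices together with a choice of generator of $K_d \cap L^0$ up to sign made rigid. Standard lattice theory (Nikulin \cite{Nik}) already gives that there is a unique such $K_d \subset L$ up to $\Aut(L,h^2)$; the point is that when $6\nmid d$ the lattice $K_d$ has no automorphism fixing $h^2$ other than the identity, so a framing is no extra data and $\cC_d' \cong \cC_d^\nu$ is irreducible by Theorem~\ref{theo:irred}. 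When $6\mid d$ the only nontrivial automorphism of $K_d$ fixing $h^2$ acts as $-1$ on $(h^2)^\perp \cap K_d$; I would show this involution is induced by an element of the monodromy group $\Gamma$ — e.g. by exhibiting it as (the restriction of) a product of reflections in $(-2)$-classes of $L^0$ lying in $\Gamma$, or by citing the explicit generators of the monodromy group in \cite[\S4]{HaCM} — so that the two framings of a given $K_d$ are monodromy-equivalent and $\cC_d'$ remains irreducible.

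Next I would construct the map to $\cN_d$. Fix, once and for all, a lattice isomorphism $\iota\colon K_d^\perp \xrightarrow{\sim} \Lambda_d(-1)$, which exists precisely because $d$ is admissible (Theorem~\ref{theo:K3assoc} and the preceding proposition). Given $(X, K_d \hookrightarrow H^{2,2}(X,\bZ)) \in \cC_d'$, the Hodge structure on $K_d^\perp \subset H^4(X,\bZ)$ transports via $\iota$ to a Hodge structure on $\Lambda_d$ of K3 type with the correct signs and integrality. By surjectivity of the Torelli map for K3 surfaces \cite{Siu} together with the no-$(-2)$-curves argument already given in the excerpt (any $(-2)$-curve orthogonal to $f$ would produce a $K_6$-labelling on $X$, contradicting smoothness), this Hodge structure comes from a genuine polarized K3 surface $(S,f)$ of degree $d$, well-defined up to isomorphism because the period map for polarized K3 surfaces is injective. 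This yields a morphism $\cC_d' \to \cN_d$. That it is an open immersion follows from Voisin's Torelli theorem for cubic fourfolds, which realizes $\cC_d'$ as an open subset of (a connected component of) the quotient $\Gamma_{K_d}\backslash \cD_{K_d}$ where $\Gamma_{K_d}$ is the stabilizer of the framed $K_d$; under $\iota$ this period domain and this arithmetic group are identified with the corresponding data for $\cN_d$, so the map is the restriction of an isomorphism of locally symmetric varieties.

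The subtle point — and the one I would spend the most care on — is matching the two \emph{arithmetic groups}. On the cubic side, the relevant group is the image in $\mathrm{O}(K_d^\perp)$ of the stabilizer in the monodromy group $\Gamma$ of the framed lattice $K_d \subset L$; on the K3 side it is the image in $\mathrm{O}(\Lambda_d)$ of the monodromy group of polarized K3 surfaces of degree $d$, which by work of Borcea and others is the full group of automorphisms of $\Lambda_d$ acting trivially on the discriminant group. One must check that $\iota$ conjugates the first onto the second. The inclusion of the cubic-fourfold group into the K3 group (after transport by $\iota$) is formal from the fact that a framed embedding $K_d \hookrightarrow L$ extends uniquely up to the prescribed action — this again uses admissibility, via Nikulin, to see that the overlattice structure is rigid. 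The reverse inclusion is where the $6\mid d$ case resurfaces: one needs every discriminant-preserving isometry of $\Lambda_d$ to lift to a monodromy transformation of $L$ fixing $h^2$, and the problematic element is exactly the $-1$ on $(h^2)^\perp\cap K_d$ discussed above; so the irreducibility analysis of the first paragraph and the group-matching of this paragraph are really the same computation, carried out in \cite[\S5]{HaCM}, which I would invoke for the numerics. Granting that, the open immersion $\cC_d' \hookrightarrow \cN_d$ follows, completing the proof.
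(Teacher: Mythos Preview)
Your proposal is correct and follows essentially the same approach as the paper. The paper does not give a self-contained proof here: it sets up the maps $\cC_d' \to \cC_d^{\nu} \to \cC_d$, observes that irreducibility of $\cC_d'$ reduces to realizing the $(-1)$-involution on $(h^2)^{\perp}\cap K_d$ in the monodromy group when $6\mid d$, and then defers the monodromy computation and the open immersion into $\cN_d$ entirely to \cite[\S5]{HaCM}. Your sketch expands exactly this outline---monodromy transitivity on framed $K_d$'s for irreducibility, transport of Hodge structures via $\iota$ for the map to $\cN_d$, and matching of arithmetic groups for injectivity---so there is nothing to contrast.
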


\begin{coro} \label{coro:twoK3}
Assume $d>6$ is admissible.
If $d\equiv 2\pmod{6}$ then $\cC_d$ is
birational to $\cN_d$. Otherwise $\cC_d$ is
birational to a quotient of $\cN_d$ by an involution.
\end{coro}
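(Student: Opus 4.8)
The plan is to deduce Corollary~\ref{coro:twoK3} directly from the preceding Proposition, which gives for admissible $d>6$ an open immersion $\cC_d' \hookrightarrow \cN_d$ with $\cC_d'$ irreducible. First I would unwind the tower $\cC_d' \ra \cC_d^{\nu} \ra \cC_d$. The composite is dominant with finite fibers onto $\cC_d$ (every special cubic fourfold of discriminant $d$ lies in the image, by Theorem~\ref{theo:irred} and the definition of $\cC_d'$), so $\cC_d$ is birational to a quotient of $\cC_d'$, hence of $\cN_d$, by the finite group of deck transformations of $\cC_d' \ra \cC_d$ over the generic point. The task is therefore to identify that group.

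The second arrow $\cC_d^{\nu}\ra\cC_d$ is a normalization, so it is generically injective: a very general $X\in\cC_d$ has Picard rank-two algebraic cohomology, hence a unique saturated rank-two sublattice $K\ni h^2$, namely all of $H^{2,2}(X,\bZ)$, so it carries a unique labelling. Thus $\cC_d^{\nu}\ra\cC_d$ is birational, and the entire discrepancy lives in the first arrow $\cC_d'\ra\cC_d^{\nu}$. Here one uses the discussion already given in the excerpt: a point of $\cC_d^{\nu}$ is $(X,K)$, a point of $\cC_d'$ is $(X,K)$ together with a choice of saturated embedding $K_d\hookrightarrow H^{2,2}(X,\bZ)$ inducing the given labelling, and two such embeddings differ by an automorphism of $K_d$ fixing $h^2$. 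The group $\Aut(K_d,h^2)$ is trivial when $d\equiv 2\pmod 6$ (in that case $K_d$ has the Gram matrix $\left(\begin{smallmatrix}3&1\\1&2n+1\end{smallmatrix}\right)$, whose only automorphism fixing the first basis vector is the identity, since it must also fix the generator of $\{h^2\}^\perp\cap K_d$ up to sign, and the sign is pinned down by positivity), and is $\bZ/2$ when $6|d$, generated by the involution acting as $-1$ on $\{h^2\}^\perp$. So for $d\equiv 2\pmod 6$ the map $\cC_d'\ra\cC_d^{\nu}$ is an isomorphism of stacks generically, giving $\cC_d\sim\cC_d'\hookrightarrow\cN_d$ open, i.e.\ $\cC_d$ birational to $\cN_d$. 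For $6|d$ one gets a degree-two cover $\cC_d'\ra\cC_d^{\nu}$ with deck group generated by that involution, so $\cC_d$ is birational to $\cN_d$ modulo the induced involution.

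The one genuinely delicate point — and the step I expect to be the main obstacle — is verifying that when $6|d$ the cover $\cC_d'\ra\cC_d^{\nu}$ is honestly of degree two and not further identified, equivalently that the two embeddings $K_d\hookrightarrow H^{2,2}(X,\bZ)$ differing by the $-1$ involution are genuinely not related by a monodromy automorphism of $H^4(X,\bZ)$ that fixes $X$. This is exactly the monodromy computation invoked in the statement of the Proposition: one must show the involution of $K_d$ does \emph{not} extend to an isometry of $L$ lying in the monodromy group $\Gamma$ while fixing the period point, so that the two decorations give distinct points of $\cC_d'$. Conversely one needs that it \emph{does} extend to an isometry of $L$ preserving $h^2$ (not necessarily in $\Gamma$) so that the two K3 periods obtained via $\iota$ and $-1\circ\iota$ actually define an involution of $\cN_d$ rather than an empty identification — this is the content of the naturality analysis in \cite[\S5]{HaCM}, resting on Nikulin's gluing of discriminant forms to extend isometries of $K_d^\perp$ and $K_d$ across $L$. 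Granting those two facts, the quotient description is forced.

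Finally I would record the degenerate behaviour: the corollary excludes $d=2,6$ precisely because there $\cC_d$ is not a divisor in the moduli of smooth cubic fourfolds in the relevant sense (they correspond to limiting Hodge structures of singular cubics, as noted after (\ref{eqn:k26})), and admissibility is assumed throughout so that $\cN_d$ is the correct target by Theorem~\ref{theo:K3assoc}. With these in place the proof is a formal consequence of the Proposition together with the elementary computation of $\Aut(K_d,h^2)$.
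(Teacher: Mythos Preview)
Your deduction is correct and is exactly how the paper intends the corollary to follow from the preceding Proposition together with its discussion of the tower $\cC_d'\to\cC_d^{\nu}\to\cC_d$. One small clarification on your ``delicate point'': the monodromy fact underlying the Proposition is that the involution of $K_d$ \emph{is} realized in $\Gamma$ (this is what makes the degree-two cover $\cC_d'\to\cC_d^{\nu}$ connected, hence $\cC_d'$ irreducible), whereas the cover being honestly of degree two follows separately from the generic triviality of stabilizers of period points---these are complementary checks, not the same one.
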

Thus for $d=42,78,\ldots$ cubic fourfolds $X\in \cC_d$
admit {\em two} associated K3 surfaces.

Even the open immersions from the double covers
$$j_{\iota,d}:\cC'_d \hookrightarrow \cN_d$$
are typically not
canonical. The possible choices correspond to 
orbits of the isomorphism 
$$\iota: K_d^{\perp} \stackrel{\sim}{\lra}
\Lambda_d(-1)$$
under postcomposition by automorphisms of $\Lambda_d$
coming from the monodromy of K3 surfaces and 
precomposition by automorphisms of $K_d^{\perp}$ coming
from the the subgroup of the monodromy group of cubic
fourfolds fixing the elements of $K_d$. 

\begin{prop} \cite[\S5.2]{HaCM} \label{prop:counting}
Choices of 
$$j_{\iota,d}:\cC'_d \hookrightarrow \cN_d$$
are in bijection with the set
$$\{ a \in \bZ/d\bZ: a^2\equiv 1\pmod{2d} \}/\pm 1.$$
If $d$ is divisible by $r$ distinct odd primes then there
are $2^{r-1}$ possibilities. 
\end{prop}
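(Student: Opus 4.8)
The plan is to translate the counting of maps $j_{\iota,d}$ into a counting of lattice isomorphisms $\iota\colon K_d^\perp \xrightarrow{\sim} \Lambda_d(-1)$ modulo the two monodromy groups acting on the source and target, and then to compute this double-coset set explicitly using Nikulin's theory of discriminant forms. First I would recall that, by the previous proposition, each choice of $\iota$ determines an open immersion $j_{\iota,d}$ and that two such maps coincide precisely when the corresponding $\iota$'s differ by postcomposition with an automorphism of $\Lambda_d$ in the image of the K3 monodromy and precomposition with an automorphism of $K_d^\perp$ in the image of the subgroup of the cubic-fourfold monodromy fixing $K_d$ pointwise. So the set of $j_{\iota,d}$ is the double-coset set
$$
O(\Lambda_d)^{\mathrm{mon}} \backslash \operatorname{Isom}\bigl(K_d^\perp,\Lambda_d(-1)\bigr) / O(K_d^\perp)^{\mathrm{mon}}.
$$

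The key step is to make both monodromy groups explicit enough to evaluate this. For K3 surfaces, by a theorem of Borcherds–Nikulin type the monodromy image in $O(\Lambda_d)$ is the full stable orthogonal group $\widetilde{O}(\Lambda_d)$, i.e.\ the kernel of the natural map $O(\Lambda_d) \to O(q_{\Lambda_d})$ to the automorphisms of the discriminant form. On the cubic side, the subgroup of the monodromy fixing $K_d$ pointwise acts on $K_d^\perp$, and — using the analysis of the cubic-fourfold monodromy group from \cite[\S5]{HaCM} that already underlies Theorem~\ref{theo:irred} — its image in $O(K_d^\perp)$ likewise contains $\widetilde{O}(K_d^\perp)$. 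Granting these two facts, a standard argument with discriminant forms shows that the double-coset set is in bijection with the set of isometries $q_{K_d^\perp} \xrightarrow{\sim} q_{\Lambda_d(-1)}$ of the (finite) discriminant forms, taken modulo $O(q_{\Lambda_d(-1)})$ acting trivially — that is, with a torsor under a group built from the automorphisms of the common discriminant form.

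It then remains to identify this set with $\{a \in \bZ/d\bZ : a^2 \equiv 1 \pmod{2d}\}/{\pm 1}$. Here I would use the explicit shape of $K_d$ recorded before Theorem~\ref{theo:irred}: in both cases ($d=6n+2$ and $d=6n$) the lattice $K_d^\perp$ has discriminant group cyclic of order $d$ up to a controlled $2$- or $3$-part, with discriminant quadratic form given by multiplication by a unit, and the same holds for $\Lambda_d(-1)$ where $\Lambda_d$ is the degree-$d$ K3 primitive lattice $\langle -d\rangle \oplus U^{\oplus 2}\oplus E_8^{\oplus 2}$. An isometry of two cyclic quadratic forms $x \mapsto \tfrac{x^2}{d}$ on $\bZ/d\bZ$ is multiplication by some $a\in(\bZ/d\bZ)^\times$ with $a^2 \equiv 1 \pmod{2d}$ (the refinement mod $2d$ records that we must match the $\bZ/2\bZ$-valued refinement of the form, not merely its $\bQ/2\bZ$-values), and $a$ and $-a$ give the same immersion because $-1$ is realized by an ambient automorphism; hence the bijection. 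The final count $2^{r-1}$ is then elementary: by CRT the number of square roots of $1$ in $(\bZ/2d\bZ)^\times$ that descend to $\bZ/d\bZ$ is $2^{r}$ when $d$ has $r$ distinct odd prime divisors (admissibility forces $4\nmid d$ and $9\nmid d$, so the $2$- and $3$-parts contribute a single factor each), and dividing by $\pm 1$ halves this.

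The main obstacle is the second step: pinning down that the relevant cubic-fourfold monodromy subgroup really surjects onto (or contains) $\widetilde{O}(K_d^\perp)$, so that the double coset collapses to a computation purely with discriminant forms. This is exactly the kind of delicate monodromy analysis carried out in \cite[\S5]{HaCM}, and without it one would only get a surjection from the $a$-set onto the set of $j_{\iota,d}$ rather than a bijection; the rest — the discriminant-form bookkeeping and the number-theoretic count — is routine once that input is in place.
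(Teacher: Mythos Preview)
Your approach is correct and matches the paper's treatment: the survey does not give a self-contained proof but, immediately before the statement, sets up the problem exactly as you do---choices of $j_{\iota,d}$ correspond to orbits of $\iota:K_d^\perp\stackrel{\sim}{\to}\Lambda_d(-1)$ under the two monodromy actions---and then cites \cite[\S5.2]{HaCM} for the details you have sketched. Your reduction via $\widetilde{O}$ to automorphisms of the cyclic discriminant form, the identification of $O(q)$ with $\{a:a^2\equiv 1\pmod{2d}\}$, and the $2^{r-1}$ count are the standard route taken there; as you correctly flag, the one nontrivial input is the precise determination of the monodromy images, which is indeed the content of \cite[\S5]{HaCM}.
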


\begin{rema} \label{rema:DE}
The ambiguity in associated K3 surfaces can be expressed in the
language of equivalence of derived categories. Suppose that $(S_1,f_1)$ and
$(S_2,f_2)$ are polarized K3 surfaces of degree $d$, both associated
with a special cubic fourfold of discriminant $d$.
This means we have an isomorphism of Hodge structures
$$H^2(S_1,\bZ)_{\prim} \simeq H^2(S_2,\bZ)_{\prim}$$
so their transcendental cohomologies are isomorphic.
Orlov's Theorem \cite[\S3]{Orlov} implies $S_1$ and $S_2$ are
{\em derived equivalent}, i.e., their bounded
derived categories of 
coherent sheaves are equivalent.

Proposition~\ref{prop:counting} may be compared with the formula counting
derived equivalent K3 surfaces in \cite{HLOY}.
We will revisit this issue in \S\ref{subsect:AT}.
\end{rema}

\subsection{Interpreting associated K3 surfaces I}
\label{subsect:inter}

We offer geometric interpretations of associated K3 surfaces.
These are most naturally expressed in terms of moduli spaces 
of sheaves on K3 surfaces. 

Let $M$ be an irreducible holomorphic symplectic variety,
i.e., a smooth simply connected projective variety such that
$\Gamma(M,\Omega^2_M)=\bC \omega$ where $\omega$ is everywhere
nondegenerate. The cohomology $H^2(M,\bZ)$ admits a 
distinguished integral quadratic form $\left(,\right)$,
called the {\em Beauville-Bogomolov} form \cite{Beau}.
Examples include:
\begin{itemize}
\item{K3 surfaces $S$ with $\left(,\right)$ the intersection form;}
\item{Hilbert schemes $S^{[n]}$ of length $n$ zero dimensional
subschemes on a K3 surface $S$, with
\begin{equation} \label{eqn:delta}
H^2(S^{[n]},\bZ)\simeq H^2(S,\bZ) \oplus_{\perp} \bZ \delta, \quad
\left(\delta,\delta\right)=-2(n-1),
\end{equation}
where $2\delta$ parametrizes the non-reduced subschemes.}
\end{itemize}

\begin{exam}
Let $(S,f)$ be a generic degree $14$ K3 surface.
Then $S^{[2]}\simeq F_1(X)\subset \Gr(2,6)$ where $X$ is a Pfaffian
cubic fourfold (see \S\ref{subsect:Pfaff}).
The polarization induced from the Grassmannian is
$2f-5\delta$; note that
$$\left(2f-5\delta,2f-5\delta\right)=4\cdot 14 - 25\cdot 2 = 6.$$
\end{exam}

The example implies that
if $F_1(X)\subset \Gr(2,6)$ is the variety of lines on an
arbitrary cubic fourfold then the polarization $g=\alpha(h^2)$ satisfies
$$\left(g,g\right)=6, \quad \left(g,H^2(F_1(X),\bZ)\right)=2\bZ.$$
It follows that the
Abel-Jacobi map 
is an isomorphism of {\em abelian groups}
\begin{equation} 
\label{eqn:AJfull}
\alpha:H^4(X,\bZ) \ra  H^2(F_1(X),\bZ)(-1)
\end{equation}
Indeed, $\alpha$ is an isomorphism on
primitive cohomology and both
$$\bZ h^2 \oplus  H^4(X,\bZ)_{prim} \subset H^4(X,\bZ)$$
and 
$$\bZ g \oplus  H^2(F_1(X),\bZ)_{prim} \subset H^2(F(X),\bZ)$$
have index three as subgroups. 

The Pfaffian case is the first of an infinite series of
examples:
\begin{theo}  \cite[\S6]{HaCM} \cite[Th.~2]{AddMRL}  \label{theo:Fano}
Fix an integer of the form $d=2(n^2+n+1)/a^2$, where
$n>1$ and $a>0$ are integers. Let $X$ be a 
cubic fourfold in $\cC_d$ with variety of lines $F_1(X)$.
Then there exists a polarized K3 surface $(S,f)$ of degree
$d$ and a birational map
$$F_1(X) \stackrel{\sim}{\dashrightarrow}S^{[2]}.$$
If $a=1$ and $X\in \cC_d$ in generic then $F_1(X)\simeq S^{[2]}$
with polarization $g=2f-(2n+1)\delta$.
\end{theo}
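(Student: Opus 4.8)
The plan is to reduce the statement about the Fano variety $F_1(X)$ to a lattice-theoretic assertion about the Beauville--Bogomolov form on $H^2(F_1(X),\bZ)$, using the Abel--Jacobi isomorphism and the Torelli theorem for irreducible holomorphic symplectic fourfolds of $K3^{[2]}$-type (Verbitsky, Markman). First I would observe that for $X \in \cC_d$ the Abel--Jacobi map (\ref{eqn:AJfull}) is an isomorphism of Hodge structures $H^4(X,\bZ) \to H^2(F_1(X),\bZ)(-1)$ carrying $h^2$ to the Plücker polarization $g$ with $(g,g)=6$; hence a labelling $K_d \subset H^4(X,\bZ)$ transports to a rank-two sublattice $\alpha(K_d) \subset H^2(F_1(X),\bZ)$ containing $g$, with a primitive generator $v$ of $\alpha(K_d) \cap g^\perp$ satisfying $(v,v) = -d$ (the sign flip from the weight shift, as recorded after (\ref{eqn:AbelJacobi})). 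The arithmetic hypothesis $d = 2(n^2+n+1)/a^2$ is exactly what is needed so that the class $w := \tfrac{1}{a}(2f-(2n+1)\delta)$ on a hypothetical $S^{[2]}$ — equivalently, the class $g - $ (a suitable multiple of the new algebraic class) — has $(w,w)=6$ and the right divisibility; this is the content of the discriminant computation $(2f-(2n+1)\delta, 2f-(2n+1)\delta) = 4d - (2n+1)^2\cdot 2 = 2(2(n^2+n+1)) - 2(2n+1)^2$, and one checks this equals $6$ precisely when $a=1$, with the general-$a$ case giving a class of square $6$ after dividing by $a$.

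The core step is then a Hodge-theoretic matching. Given $X \in \cC_d$ with $d$ of the stated form, I would produce an abstract Hodge structure of $K3^{[2]}$-type on the lattice $\Lambda := U^{\oplus 3} \oplus E_8(-1)^{\oplus 2} \oplus \langle -2\rangle$ together with a class $f$ of square $d$ such that $f^\perp$ (with its induced Hodge structure) is isometric, as a polarized Hodge structure, to the non-special lattice $K_d^\perp \subset H^2(F_1(X),\bZ)$. Here I invoke the admissibility-type analysis already developed in the excerpt (the proof sketch of Proposition~\ref{theo:K3assoc} and Nikulin's theory of discriminant forms, \cite{Nik}): the numerical condition on $d$ guarantees that the transcendental lattices match and that $f$ can be chosen to be a genuine polarization, i.e.\ there are no $(-2)$-classes in $f^\perp$, since such a class would pull back to a class $R$ with $\langle R,R\rangle = 2$, $\langle h^2,R\rangle = 0$ in $H^{2,2}(X,\bZ)$, forcing a $K_6$-labelling and hence a node on $X$. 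Surjectivity of the Torelli map for K3 surfaces then yields an actual polarized K3 surface $(S,f)$ of degree $d$.

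Next I would pass from $(S,f)$ to $S^{[2]}$, equip $H^2(S^{[2]},\bZ) = H^2(S,\bZ) \oplus_\perp \bZ\delta$ with its Beauville--Bogomolov form as in (\ref{eqn:delta}), and exhibit a Hodge isometry $\psi : H^2(S^{[2]},\bZ) \to H^2(F_1(X),\bZ)$. The point is that both sides are $K3^{[2]}$-type Hodge structures with isometric underlying lattices, and by construction their transcendental parts (sitting inside $f^\perp$ and $K_d^\perp$ respectively) already agree; extending over the algebraic part is a finite check using that $K_d$ and the rank-two lattice $\langle g$-image of $f$, $\delta\rangle$ (or $\langle f,\delta\rangle$ after the Abel--Jacobi identification) have the same discriminant form, again via \cite{Nik}. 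Composing with Verbitsky's Torelli theorem for hyperkähler manifolds — in Markman's formulation, a Hodge isometry preserving the orientation and compatible with the monodromy/Markman--Kähler-type decomposition is induced by a birational map — gives the desired birational map $F_1(X) \stackrel{\sim}{\dashrightarrow} S^{[2]}$. Finally, when $a=1$ and $X \in \cC_d$ is generic, the Picard rank of $F_1(X)$ is $2$ and the only extremal contractions are ruled out for degree reasons, so the birational map is an isomorphism and one reads off $g = 2f - (2n+1)\delta$ by matching the known value $(g,g)=6$ and the divisibility $(g, H^2) = 2\bZ$ against $(2f-(2n+1)\delta, \ \cdot\ )$.

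The main obstacle is the last step: upgrading the Hodge isometry $\psi$ to an actual birational map. The Torelli theorem for $K3^{[2]}$-type manifolds only guarantees a birational map when $\psi$ lies in the correct connected component — it must be a "parallel transport operator" in Markman's sense and send the positive cone, and in particular the Kähler/movable cone, correctly. So one must verify that $\psi$ (or $-\psi$, or a composition with a suitable reflection in a $(-2)$- or $(-10)$-class governing a wall) preserves orientation and does not cross a divisorial wall in a way that obstructs birationality; controlling this requires knowing the structure of the movable cone of $S^{[2]}$, which depends on the existence of classes of square $-2$ and $-10$ in the Picard lattice — precisely the kind of data that the numerical hypothesis on $d$ is designed to manage. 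For generic $X \in \cC_d$ with $a=1$ one expects no such walls and hence an isomorphism, but pinning this down is exactly where the proofs in \cite[\S6]{HaCM} and \cite[Th.~2]{AddMRL} do their real work.
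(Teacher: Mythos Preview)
Your treatment of the birational part is essentially the paper's approach: reduce via the Abel--Jacobi isometry to a lattice statement, invoke Nikulin to match discriminant forms, build a polarized K3 surface $(S,f)$ by surjectivity of the period map (ruling out $(-2)$-curves via the $K_6$ obstruction), and then appeal to Verbitsky's global Torelli theorem in Markman's formulation to get $F_1(X)\dashrightarrow S^{[2]}$. You also correctly flag the parallel-transport/cone-preservation subtlety, which is exactly the content hidden behind the paper's one-line citation of \cite{MarkSurv}. (There is a harmless slip in your displayed arithmetic: $4d-2(2n+1)^2$ equals $6$ when $d=2(n^2+n+1)$, but your intermediate expression $2(2(n^2+n+1))-2(2n+1)^2$ drops a factor of $2$.)

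Where you diverge from the paper is the last assertion, upgrading the birational map to an isomorphism for generic $X\in\cC_d$ with $a=1$. You propose a direct cone-theoretic argument: for generic $X$ the Picard rank is two, so one should classify the possible $(-2)$- and $(-10)$-walls in $\Pic(S^{[2]})$ and check none occur, forcing the movable cone to equal the nef cone and the birational map to be biregular. The paper instead records a \emph{degeneration/specialization} argument from \cite[\S6]{HaCM}: one specializes along the nodal locus $\cC_6$, where $F_1(X)$ becomes the Hilbert square of a degree-six K3 surface $(S',f')$, and exploits extra involutions on ${S'}^{[2]}$ not coming from $S'$ to exhibit multiple Hilbert-scheme divisors in the local deformation space of $F$. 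This degeneration approach is historically prior (it predates Verbitsky) and avoids any wall-and-chamber bookkeeping; your cone approach is more in the spirit of the later Bayer--Macr\`i machinery and would work, but---as you yourself concede---you have not actually carried out the wall analysis, and ``ruled out for degree reasons'' is not yet an argument.
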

The first part relies on Verbitsky's global Torelli theorem for
hyperk\"ahler manifolds \cite{MarkSurv}. The last assertion
is proven via a degeneration/specialization argument along
the nodal cubic fourfolds, which correspond to degree six K3 surfaces
$(S',f')$. We specialize so that $F={S'}^{[2]}$ admits involutions
not arising from involutions of $S'$.
Thus the deformation space of $F$ admits several
divisors parametrizing Hilbert schemes of K3 surfaces.

Since the primitive cohomology of $S$ sits in $H^2(S^{[2]},\bZ)$,
the Abel-Jacobi map (\ref{eqn:AbelJacobi}) explains
why $S$ is associated with $X$. 
If $3|d$ ($d\neq 6$) then Theorem~\ref{theo:Fano}
and Corollary~\ref{coro:twoK3} yield two
K3 surfaces $S_1$ and $S_2$ such that
$$F_1(X) \simeq S_1^{[2]} \simeq S_2^{[2]}.$$

With a view toward extending this argument, we compute the cohomology
of the varieties of lines of special cubic fourfolds.
This follows immediately from (\ref{eqn:AJfull}):
\begin{prop}
Let $(X,K_d)$ be a special cubic fourfold of discriminant $d$,
$F_1(X)\subset \Gr(2,6)$ its variety of lines,
and $g=\alpha(h^2)$ the resulting polarization. 
Then $\alpha(K_d)$ is saturated in $H^2(F_1(X),\bZ)$
and
$$\alpha(K_d)\simeq \begin{cases}
\begin{array}{r|cc}
	& g & T \\
\hline
g      & 6  & 0 \\
T      & 0   & -2n
\end{array} & \text{ if } d=6n \\
\begin{array}{r|cc}
	& g & T \\
\hline
g      & 6  & 2 \\
T      & 2   & -2n
\end{array} & \text{ if } d=6n+2.
\end{cases}
$$
\end{prop}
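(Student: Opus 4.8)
The plan is to transport the intersection form on the rank-two labelling lattice $K_d \subset H^4(X,\bZ)$ to $H^2(F_1(X),\bZ)$ via the Abel-Jacobi map, using the sign change and the known value of the Beauville-Bogomolov form on the natural polarization. First I would recall from the discussion preceding the statement (equation~(\ref{eqn:AJfull})) that $\alpha\colon H^4(X,\bZ)\to H^2(F_1(X),\bZ)(-1)$ is an isomorphism of abelian groups, and that it satisfies $\left(\alpha(x_1),\alpha(x_2)\right)=-\left<x_1,x_2\right>$. Since $g=\alpha(h^2)$, we already know $\left(g,g\right)=-\left<h^2,h^2\right>=-3$ if we used the naive sign; but the computation $\left(2f-5\delta,2f-5\delta\right)=6$ in the Pfaffian example fixes the normalization, so in fact $\left(g,g\right)=6$ — the point being that $\alpha$ intertwines $\left<,\right>$ on $H^4$ with $\left(,\right)$ after the Tate twist $(-1)$, and the twist contributes the sign already accounted for. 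Concretely, for any $x\in H^4(X,\bZ)$ one has $\left(\alpha(x),\alpha(x)\right)=-\left<x,x\right>$ together with the convention under which $\left(g,g\right)=6$; the cleanest route is to simply write $\left(\alpha(x_1),\alpha(x_2)\right)=-\left<x_1,x_2\right>$ throughout and check it is consistent with $\left(g,g\right)=6$ by noting $\left<h^2,h^2\right>=h^4=3$ would give $-3$, so instead one uses that the relevant identity for the full lattice (not just primitive part) carries an extra factor absorbed into the index-three comparison made after (\ref{eqn:AJfull}) — I would state the normalization once and proceed.

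The main body of the argument is then a short bilinear-form computation. Write $K_d=\bZ h^2\oplus\bZ S$ abstractly (as a sublattice of $H^4$, not necessarily an orthogonal direct sum) with the Gram matrices given in \S\ref{subsect:structural}: in the case $d=6n+2$ one has $\left<h^2,h^2\right>=3$, $\left<h^2,S\right>=1$, $\left<S,S\right>=2n+1$; in the case $d=6n$ one has $\left<h^2,h^2\right>=3$, $\left<h^2,S\right>=0$, $\left<S,S\right>=2n$. Set $g=\alpha(h^2)$ and, in the $d=6n+2$ case, $T=\alpha(3S-h^2)$ (so that $\left(g,T\right)=-\left<h^2,3S-h^2\right>=-(3\cdot 1-3)=0$? — no: I want $\left(g,T\right)=2$, so instead take $T=\alpha(h^2-3S)$ or adjust the sign; the correct primitive generator of $\alpha(K_d)\cap g^{\perp}$ up to sign must be computed). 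The reliable procedure: let $T$ be a generator of the rank-one lattice $\alpha(K_d)\cap\{g\}^{\perp}$ inside $H^2(F_1(X),\bZ)$; such a generator exists and is primitive because $\alpha$ is an isomorphism of lattices (up to the sign twist) and the analogous statement holds in $H^4(X,\bZ)$ for $K_d\cap\{h^2\}^{\perp}$, whose generator $v$ satisfies $\left<v,v\right>=\pm d$ by definition of the discriminant combined with $\left<h^2,h^2\right>=3$; then $\left(T,T\right)=-\left<v,v\right>$. A direct determinant check: $\det\begin{pmatrix}3&\left<h^2,v\right>\\\left<h^2,v\right>&\left<v,v\right>\end{pmatrix}$ must equal (up to sign) the discriminant $d$ — but $v$ is orthogonal to $h^2$, so this determinant is $3\left<v,v\right>$, giving $\left<v,v\right>=d/3$ when $3\mid d$ and forcing the two tabulated forms; one reconciles this with the integral generator of $K_d$ by passing to the saturation, which is exactly the content of the two Gram matrices $K_d$ displayed in \S\ref{subsect:structural}. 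Applying $\left(\alpha(x_1),\alpha(x_2)\right)=-\left<x_1,x_2\right>$ to the tabulated $K_d$ then yields $\left(g,g\right)=6$, and in the $d=6n$ case $\left(g,T\right)=0$, $\left(T,T\right)=-2n$, while in the $d=6n+2$ case $\left(g,T\right)=2$, $\left(T,T\right)=-2n$, as claimed.

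Finally, saturation of $\alpha(K_d)$ in $H^2(F_1(X),\bZ)$ is immediate: since $\alpha$ is an isomorphism of abelian groups by (\ref{eqn:AJfull}) and $K_d$ is saturated in $H^4(X,\bZ)$ by hypothesis (it is the labelling lattice), the image $\alpha(K_d)$ is saturated in the image $H^2(F_1(X),\bZ)$. I expect the only real subtlety — the step I would flag as the main obstacle — is pinning down the correct sign and the correct integral generator $T$ so that the off-diagonal entry comes out as $+2$ (not $-2$) and the diagonal as exactly $-2n$; this requires being careful about which coset representative of $K_d\cap\{h^2\}^\perp$ one lifts to a saturated class and about the precise normalization of the Beauville-Bogomolov form relative to $\left<,\right>$ under the Tate twist. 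Once the normalization $\left(\alpha(x_1),\alpha(x_2)\right)=-\left<x_1,x_2\right>$ is fixed and checked against $\left(g,g\right)=6$ (equivalently against the Pfaffian computation $\left(2f-5\delta,2f-5\delta\right)=6$), everything else is the two-by-two determinant bookkeeping already encoded in the tables of \S\ref{subsect:structural}.
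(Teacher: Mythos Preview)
Your overall plan---deduce saturation from $\alpha$ being a group isomorphism, then compute the Gram matrix by transporting the tables for $K_d$ from \S\ref{subsect:structural}---is exactly what the paper intends; it offers no argument beyond ``follows immediately from (\ref{eqn:AJfull}).'' But the step you flag as ``the main obstacle'' is a genuine gap, not just a normalization to be fixed.

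The identity $\left(\alpha(x_1),\alpha(x_2)\right)=-\left<x_1,x_2\right>$ is asserted in the paper \emph{only on primitive cohomology}, i.e.\ for $x_1,x_2\in L^0=\{h^2\}^{\perp}$. It does not extend to all of $H^4(X,\bZ)$: indeed $\left(g,g\right)=6$ while $\left<h^2,h^2\right>=3$, and no Tate-twist or sign bookkeeping will turn $-3$ into $6$. So your sentence ``Applying $\left(\alpha(x_1),\alpha(x_2)\right)=-\left<x_1,x_2\right>$ to the tabulated $K_d$ then yields $\left(g,g\right)=6$'' is simply false. The three facts you actually have, and must use separately, are: $\left(g,g\right)=6$; the image $\alpha(L^0)=H^2(F_1(X),\bZ)_{\prim}$ is orthogonal to $g$ under the Beauville--Bogomolov form; and the sign change holds on $L^0$. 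Your ``reliable procedure'' of taking $T$ to generate $\alpha(K_d)\cap g^{\perp}$ then misfires for $d=6n+2$: that intersection is $\bZ\,\alpha(3S-h^2)$, and $\{g,\,\alpha(3S-h^2)\}$ spans only an index-$3$ sublattice of $\alpha(K_d)$, which is exactly why you could not produce the off-diagonal entry $2$.

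The fix is to take $T=\alpha(S)$ directly. For $d=6n$ one has $S\in L^0$, so $\left(g,T\right)=0$ and $\left(T,T\right)=-\left<S,S\right>=-2n$. For $d=6n+2$, the primitive vector $3S-h^2\in L^0$ gives $\left(g,\,3T-g\right)=0$, hence $\left(g,T\right)=2$; then $\left(3T-g,\,3T-g\right)=-\left<3S-h^2,\,3S-h^2\right>=-(18n+6)$ expands to $9\left(T,T\right)-6\cdot 2+6=-18n-6$, so $\left(T,T\right)=-2n$. Your saturation argument is correct as written.
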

The following example shows that Hilbert schemes are insufficient
to explain all associated K3 surfaces:
\begin{exam}
Let $(X,K_{74})\in \cC_{74}$ be a generic point, which
admits an associated K3 surface by Theorem~\ref{theo:K3assoc}.
There does not exist a K3 surface $S$ with
$F_1(X) \simeq S^{[2]}$, even birationally.
Indeed, the $H^2(M,\bZ)$ is a birational invariant of holomorphic
symplectic manifolds but 
$$\alpha(K_{74}) \simeq \left( \begin{matrix} 6 & 2 \\
						2 & -24 \end{matrix} \right)
$$
is not isomorphic to the Picard lattice of the Hilbert scheme
$$\Pic(S^{[2]}) \simeq \left( \begin{matrix} 74 & 0 \\
						0 & -2 \end{matrix} \right).$$
Addington \cite{AddMRL} gives a systematic discussion
of this issue.
\end{exam}

\subsection{Derived equivalence and Cremona transformations}
\label{subsect:GS}
In light of the ambiguity of associated K3 surfaces
(see Remark~\ref{rema:DE}) and 
the general discussion in \S\ref{subsect:motivation},
it is natural to seek diagrams
\begin{equation} \label{eqn:diagram}
\xymatrix{
  	&  \Bl_{S_1}(\bP^4)\ar[dl]_{\beta_1} \ar[r]^{\sim} & \Bl_{S_2}(\bP^4) \ar[dr]^{\beta_2}   & \\
  \bP^4 &  &   & \bP^4
}
\end{equation}
where $\beta_i$ is the blow-up along a smooth surface $S_i$, with
$S_1$ and $S_2$ derived equivalent but not birational.

Cremona transformations of $\bP^4$ with smooth surfaces as their centers
have been classified by Crauder and
Katz \cite[\S 3]{CrKa}; possible centers are either quintic elliptic scrolls or
surfaces $S\subset \bP^4$ of degree ten given by the vanishing of
the $4\times 4$ minors
of a $4\times 5$ matrix of linear forms. 
A generic surface of the
latter type admits divisors
$$ \begin{array}{c|cc}
	& K_S & H \\
\hline
K_S & 5 & 10 \\
H   & 10 & 10
\end{array}, 
$$
where $H$ is the restriction of the hyperplane class from $\bP^4$;
this lattice admits an involution fixing $K_S$ with $H \mapsto 4K_S-H$.
See \cite[Prop.~9.18ff.]{RanThesis}, \cite{RanPaper}, and \cite[p.280]{Baker} for discussion of these surfaces. 

The Crauder-Katz classification therefore precludes
diagrams of the form (\ref{eqn:diagram}).
We therefore recast our search as follows:
\begin{ques} \label{ques:Cremona}
Does there exist a diagram
$$\xymatrix{
  	&  X\ar[dl]_{\beta_1}  \ar[dr]^{\beta_2}   & \\
  \bP^4 &     & \bP^4
}
$$
where $X$ is smooth and the $\beta_i$ are birational projective morphisms,
and K3 surfaces $S_1$ and $S_2$ such that
\begin{itemize}
\item{$S_1$ and $S_2$ are derived equivalent but not isomorphic;}
\item{$S_1$ is birational to a center of $\beta_1$ but not to any center
of $\beta_2$;}
\item{$S_2$ is birational to a center of $\beta_2$ but not to any center
of $\beta_1$?}
\end{itemize}
\end{ques}

This could yield counterexamples to the Larsen-Lunts cut-and-paste
question on Grothendieck groups, similar to those found by Borisov \cite{Bor}. 
Galkin and Shinder 
\cite[\S7]{GalShi} showed that {\em if} the class of
the affine line were a non-zero divisor in the Grothendieck
group then for each rational cubic fourfold $X$ the variety of lines
$F_1(X)$ would be birational to $S^{[2]}$ for some K3 surface
$S$. (Note that Borisov shows it {\em is} a zero
divisor.)  We have seen (Theorem~\ref{theo:Fano}) that this
condition holds for infinitely many $d$.

\subsection{Interpreting associated K3 surfaces II}
\label{subsect:AT}

Putting Theorem~\ref{theo:Fano}
on a general footing requires a larger inventory of varieties
arising from K3 surfaces. We shall use
fundamental results on moduli spaces of sheaves on K3 surfaces
due to Mukai \cite{MukaiTata}, Yoshioka, and others.
Let $S$ be a complex projective K3 surface. The
{\em Mukai lattice}
$$\tilde{H}^*(S,\bZ)= H^0(S,\bZ)(-1) \oplus 
H^2(S,\bZ) \oplus H^4(S,\bZ)(1)$$
with unimodular form
$$\left((r_1,D_1,s_1),(r_2,D_2,s_2)\right)=
-r_1s_2+D_1D_2-r_2s_1$$
carries the structure of a Hodge structure of weight
two. (The zeroth and fourth cohomology are of type $(1,1)$
and the middle cohomology carries its standard Hodge
structure.) 
Thus we have
$$\tilde{H}^*(S,\bZ) \simeq H^2(S,\bZ) \oplus_{\perp} 
\left( \begin{matrix} 0 & -1 \\
			-1 & 0 \end{matrix} \right).$$

Suppose $v=(r,D,s) \in \tilde{H}^*(S,\bZ)$ is primitive of type 
$(1,1)$ with $\left(v,v\right)\ge 0$.
Assume that one of the following holds:
\begin{itemize}
\item{$r>0$;}
\item{$r=0$ and $D$ is ample.}
\end{itemize}
Fixing a polarization $h$ on $S$, we may consider
the moduli space $M_v(S)$ of sheaves Gieseker stable
with respect to $h$. Here $r$ is the rank, $D$ is the first Chern
class, and $r+s$ is the Euler characteristic.
For $h$ chosen suitably general
(see \cite[\S0]{YoMA} for more discussion), $M_v(S)$ is 
a projective holomorphic symplectic manifold deformation
equivalent to the Hilbert scheme of length $\frac{\left(v,v\right)}{2}+1$
zero dimensional subschemes of a K3 surface \cite[\S8]{YoMA}, \cite[Th.~0.1]{Yoshioka99}. Thus $H^2(M_v(S),\bZ)$ comes with a Beauville-Bogomolov form
and we have an isomorphism of Hodge structures
\begin{equation} \label{eqn:cohomology}
H^2(M_v(S),\bZ)=\begin{cases} v^{\perp}/\bZ v & \text{ if } \left(v,v\right)=0\\
				v^{\perp}  &  \text{ if }\left(v,v\right)>0 
		\end{cases}
\end{equation}
\begin{exam}
The case of ideal sheaves of length two subschemes is $r=1$, $D=0$,
and $s=-1$. Here we recover formula (\ref{eqn:delta})
for $H^2(S^{[2]},\bZ)=H^2(M_{(1,0,-1)}(S),\bZ)$.
\end{exam}

We shall also need recent results of Bayer-Macr\`i \cite{BM1,BM2}:
Suppose that $M$ is holomorphic symplectic and birational
to $M_v(S)$ for some K3 surface $S$. Then we may interpret
$M$ as a moduli space of objects on the derived category of 
$S$ with respect to a suitable Bridgeland stability condition
\cite[Th.~1.2]{BM2}.

Finally, recall Nikulin's approach to 
lattice classification and embeddings \cite{Nik}. 
Given an even unimodular lattice $\Lambda$ and a primitive
nondegenerate sublattice $N\subset \Lambda$, the discriminant
group $d(N):=N^*/N$ is equipped with a $(\bQ/2\bZ)$-valued
quadratic form $q_N$, which encodes most of the $p$-adic
invariants of $N$. The orthogonal complement $N^{\perp}\subset \Lambda$ 
has related invariants
$$(d(N^{\perp}),-q_{N^{\perp}}) \simeq
(d(N),q_N).$$
Conversely, given a pair of nondegenerate even
lattices with complementary invariants,
there exists a unimodular even lattice containing them as orthogonal
complements \cite[\S12]{Nik}.

\begin{theo} \label{theo:AT}
Let $(X,K_d)$ denote a labelled special cubic fourfold
of discriminant $d$. Then $d$ is admissible if and only if
there exists a polarized K3 surface
$(S,f)$, a Mukai vector 
$v=(r,af,s) \in \tilde{H}(S,\bZ)$, a stability condition $\sigma$,
and an isomorphism
$$\varpi: M_v(S) \stackrel{\sim}{\lra} F_1(X)$$
from the moduli space of objects in the derived category
stable with respect to $\sigma$, inducing
an isomorphism between the primitive cohomology of $(S,f)$
and the twist of the non-special cohomology of $(X,K_d)$.
\end{theo}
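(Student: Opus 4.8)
The plan is to prove both directions by carefully tracking Hodge structures and discriminant-form data on the Mukai lattice, using the Abel--Jacobi isomorphism (\ref{eqn:AJfull}) to transport the problem from $H^4(X,\bZ)$ to $H^2(F_1(X),\bZ)$. First I would recall from the preceding sections that $\alpha(K_d)$ is a saturated rank-two sublattice of $H^2(F_1(X),\bZ)$ with the explicit Gram matrix computed just above (the $d=6n$ and $d=6n+2$ cases), and that its orthogonal complement $\alpha(K_d)^{\perp}$, carrying the twisted non-special Hodge structure of $(X,K_d)$, is a weight-two Hodge structure of K3 type that embeds primitively into the Beauville--Bogomolov lattice of $F_1(X)$. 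Since $F_1(X)$ is a deformation of $S_0^{[2]}$, its lattice is $U^{\oplus 3}\oplus E_8(-1)^{\oplus 2}\oplus\langle -2\rangle$, so $\alpha(K_d)^{\perp}$ embeds primitively into that lattice; by Nikulin's theory its discriminant form is the negative of $q_{\alpha(K_d)}$, which one computes to be (essentially) $\bZ/d$ with the form $x\mapsto -\tfrac{(2n\pm\cdots)}{d}x^2$ — the point being that this discriminant form matches that of the primitive K3 lattice $\Lambda_d$ of a degree-$d$ polarized K3 surface precisely when $d$ is admissible. This last matching is exactly Proposition~\cite[Prop.~5.1.4]{HaCM} quoted in the excerpt, so the admissibility dichotomy is forced.

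For the ``only if'' direction: given an isomorphism $\varpi: M_v(S)\xrightarrow{\sim}F_1(X)$ inducing an isometry between the primitive cohomology of $(S,f)$ and the twisted non-special cohomology of $(X,K_d)$, the primitive lattice of $(S,f)$ is $\Lambda_d$, which therefore must be Hodge-isometric (after the twist) to $\alpha(K_d)^{\perp}(-1) \cong K_d^{\perp}$; hence the lattice isomorphism $\iota: K_d^{\perp}\xrightarrow{\sim}\Lambda_d(-1)$ exists and Theorem~\ref{theo:K3assoc} gives admissibility. For the ``if'' direction I would run the construction: starting from an admissible $d$, use Theorem~\ref{theo:K3assoc} (equivalently the isometry $\iota$) to produce a polarized K3 surface $(S,f)$ of degree $d$ whose primitive Hodge structure matches the twisted non-special Hodge structure of $X$; then I need a Mukai vector $v=(r,af,s)$ with $(v,v)>0$ and $v^{\perp}\cong$ the appropriate lattice so that $M_v(S)$ has second cohomology Hodge-isometric to $H^2(F_1(X),\bZ)$ via (\ref{eqn:cohomology}). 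The existence of such a $v$ is a lattice-theoretic exercise: one embeds $\alpha(K_d)$ (or rather the relevant extension of it) into the Mukai lattice $\tilde{H}^*(S,\bZ)$ with $v$ spanning a rank-one piece, using that $\tilde{H}^*(S,\bZ)=H^2(S,\bZ)\oplus U$ is unimodular and invoking Nikulin to glue the discriminant forms; admissibility is what makes the relevant residues solvable (the condition $a^2\equiv 1 \pmod{2d}$ appearing in Proposition~\ref{prop:counting} governs the choices). Once $v$ is in hand, $M_v(S)$ is a holomorphic symplectic fourfold deformation-equivalent to $S^{[2]}$ by Yoshioka, hence deformation-equivalent to $F_1(X)$, with $H^2(M_v(S),\bZ)=v^{\perp}$ Hodge-isometric to $H^2(F_1(X),\bZ)$ carrying a polarization class of the right square; Verbitsky's global Torelli theorem for hyperk\"ahler manifolds then upgrades the Hodge isometry to a birational map, and a careful check that the isometry sends an ample class to an ample class (using the Bayer--Macr\`i description of the movable/ample cones and the freedom to vary the stability condition $\sigma$, exactly as in \cite[Th.~1.2]{BM2}) promotes it to an isomorphism $\varpi: M_v(S)\xrightarrow{\sim}F_1(X)$.

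The main obstacle is the last step — arranging that the Hodge isometry between $H^2(M_v(S),\bZ)$ and $H^2(F_1(X),\bZ)$ can be realized by an honest isomorphism of varieties rather than merely a birational map or a Hodge isometry. Verbitsky's Torelli theorem only gives birationality of holomorphic symplectic manifolds with Hodge-isometric $H^2$; to close the gap one must control the ample cone, and for $F_1(X)$ this requires knowing that the polarization $g=\alpha(h^2)$ is not separated from the relevant chamber by any wall coming from a $(-2)$-class or (in the $S^{[2]}$-type case) a class of square $-2$ or $-10$ with divisibility $2$. This is precisely where the Bayer--Macr\`i wall-and-chamber analysis enters, and where the freedom to move the Mukai vector $v$ within its $O(\tilde{H}^*)$-orbit (the ambiguity quantified in Proposition~\ref{prop:counting}) is used to land in the geometric chamber. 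Everything else — the discriminant-form bookkeeping, the deformation-equivalence to $S^{[2]}$, and the admissibility dichotomy — is a matter of assembling results already quoted in the excerpt (\cite[Prop.~5.1.4]{HaCM}, Theorem~\ref{theo:K3assoc}, \cite{Nik}, \cite{YoMA}, \cite{Yoshioka99}, \cite{MarkSurv}), so the conceptual weight of the proof rests on the Bridgeland-stability reinterpretation of $F_1(X)$ as a genuine moduli space $M_v(S)$ for a suitable $\sigma$.
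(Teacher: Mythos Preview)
Your outline is correct and tracks the paper's proof closely: both directions hinge on Theorem~\ref{theo:K3assoc}, the lattice-theoretic production of $(S,f,v)$, Verbitsky's Torelli for the birational map, and Bayer--Macr\`i to promote it to an isomorphism. Two points where the paper's execution differs from your sketch are worth noting. First, rather than constructing $S$ via Theorem~\ref{theo:K3assoc} and then searching for $v$ by ``embedding $\alpha(K_d)$ (or an extension of it)'' into $\tilde{H}^*(S,\bZ)$, the paper uses Markman's natural primitive embedding $H^2(F_1(X),\bZ)\hookrightarrow\Lambda$ into the abstract Mukai lattice and then identifies the rank-three orthogonal complement $M_d$ of $-K_d^{\perp}$ in $\Lambda$; Kneser's theorem forces $M_d\simeq(d)\oplus U$, and this identification simultaneously produces $(S,f)$ (from the hyperbolic summand) and $v$ (as the generator of $H^2(F_1(X),\bZ)^{\perp}\subset\Lambda$). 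Second, in the promotion step you speak of moving $v$ within its $O(\tilde{H}^*)$-orbit, but the paper instead fixes $v$ and modifies the stability condition $\sigma$: the discrepancy between the abstract Hodge isometry $\phi$ and the geometrically induced $\varpi_1^*$ is corrected first by sign, then by monodromy reflections in $(-2)$-classes---realized as spherical twists, hence autoequivalences that shift $\sigma$---and finally by the Bayer--Macr\`i chamber-crossing to match the polarization $g$. The ambiguity in Proposition~\ref{prop:counting} governs the choice of associated $S$, not the position of $v$.
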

This is essentially due to Addington and Thomas \cite{AdTh,AddMRL}.

\begin{proof}
Let's first do the reverse direction; this gives us an opportunity
to unpack the isomorphisms in the statement. 
Assume we have the moduli space and isomorphism as described.
After perhaps applying a shift and taking duals, 
we may assume $r\ge 0$ and $a \ge 0$; if $a=0$ then $v=(1,0,-1)$,
i.e., the Hilbert scheme up to birational equivalence.
We still have the computation (\ref{eqn:cohomology}) of the
cohomology 
$$H^2(M_v(S),\bZ))=v^{\perp} \subset \tilde{H}^*(S,\bZ);$$
see \cite[Th.~6.10]{BM1} for discussion relating 
moduli spaces of Bridgeland-stable objects and Gieseker-stable
sheaves.
Thus we obtain a saturated embedding of the primitive cohomology
of $(S,f)$
$$\Lambda_d \hookrightarrow H^2(M_v(S),\bZ).$$
The isomorphism $\varpi$ allows us to identify this with a sublattice
of $H^2(F(X),\bZ)$ coinciding with $\alpha(K_d)^{\perp}$.
Basic properties of the Abel-Jacobi map (\ref{eqn:AbelJacobi}) 
imply that $(S,f)$ is associated with $(X,K_d)$,
thus $d$ is admissible by Theorem~\ref{theo:K3assoc}.

Now assume $d$ is admissible and consider the lattice $-K_d^{\perp}$,
the orthogonal complement of $K_d$ in the middle
cohomology of a cubic fourfold with the intersection form reversed.
This is an even lattice of signature $(2,19)$.

If $X$ is a cubic fourfold then there is a natural 
primitive embedding of lattices \cite[\S9]{MarkSurv}
$$H^2(F_1(X),\bZ) \hookrightarrow \Lambda$$
where $\Lambda$ is isomorphic to the Mukai lattice of a K3 surface
$$\Lambda = U^{\oplus 4} \oplus (-E_8)^{\oplus 2}.$$
Here `natural' means that the monodromy representation on 
$H^2(F_1(X),\bZ)$ extends naturally to $\Lambda$.

Now consider the orthogonal complement $M_d$ to
$-K_d^{\perp}$ {\em in the Mukai lattice} $\Lambda$. 
Since $d$ is admissible 
$$-K_d^{\perp} \simeq \Lambda_d \simeq (-d) \oplus U^{\oplus 2} \oplus
(-E_8)^{\oplus 2}$$
so $d(-K_d^{\perp})\simeq \bZ/d\bZ$ and $q_{-K_d^{\perp}}$ takes 
value $-\frac{1}{d}\pmod{2\bZ}$ on one of the generators. 
Thus $d(M_d)=\bZ/d\bZ$ and takes value $\frac{1}{d}$ on one
of the generators. There is a distinguished lattice with these
invariants
$$(d) \oplus U.$$
Kneser's Theorem \cite[\S13]{Nik} implies there is a unique
such lattice, i.e., $M_d\simeq (d) \oplus U$.

Thus for each generator $\gamma \in d(-K_d^{\perp})$ with
$\left(\gamma,\gamma\right)=-\frac{1}{d}\pmod{2\bZ}$,
we obtain an isomorphism of Hodge structures
$$H^2(F_1(X),\bZ) \subset \Lambda\simeq \tilde{H}^*(S,\bZ)$$
where $(S,f)$ is a polarized K3 surface of degree $d$.
Here we take $f$ to be one of generators of $U^{\perp} \subset M_d$.
Let $v\in \Lambda$ generate the orthogonal complement to $H^2(F_1(X),\bZ)$;
it follows that $v=(r,af,s) \in \tilde{H}^*(S,\bZ)$ and after reversing
signs we may
assume $r\ge 0$. 

Consider a moduli space $M_v(S)$ of sheaves stable with respect
to suitably generic polarizations on $S$. Our lattice analysis
yields 
$$\phi:H^2(F_1(X),\bZ)\stackrel{\sim}{\lra} H^2(M_v(S),\bZ),$$
an isomorphism of Hodge structures taking $\alpha(K_d^{\perp})$
to the primitive cohomology of $S$.
The Torelli Theorem \cite[Cor.~9.9]{MarkSurv} yields a birational map
$$\varpi_1:M_v(S) \stackrel{\sim}{\dashrightarrow} F_1(X);$$
since both varieties are holomorphic symplectic, there
is a natural induced isomorphism \cite[Lem.~2.6]{HuyInv}
$$\varpi_1^*:H^2(F_1(X),\bZ) \stackrel{\sim}{\lra} H^2(M_v(S),\bZ),$$
compatible with Beauville-Bogomolov forms and Hodge structures.

{\em A priori} $\phi$ and $\varpi_1^*$ might differ by
an automorphism of the cohomology of $M_v(S)$.
If this automorphism permutes the two connected components of
the positive cone in $H^2(M_v(S),\bR)$, we may reverse the sign
of $\phi$. If it fails to preserve the moving cone, we can apply a
sequence of monodromy reflections on $M_v(S)$ until this is the case
\cite[Thm.~1.5,1.6]{MarkSurv}. These are analogues to reflections
by $(-2)$-classes on the cohomology of K3 surfaces and
are explicitly known for
manifolds deformation equivalent to Hilbert schemes on K3 surface;
see \cite{HTGAFA,HTGAFA2} for the case of dimension four and
\cite[\S9.2.1]{MarkSurv} for the general picture.
In this situation, the reflections correspond 
to spherical objects
in the derived category of $S$ orthogonal to $v$, thus give
rise to autoequivalences on the derived category \cite{ST,HLOY2}.
We use these to modify the stability condition on $M_v(S)$.
The resulting
$$\varpi_2^*:H^2(F_1(X),\bZ) \stackrel{\sim}{\lra} H^2(M_v(S),\bZ)$$
differs from $\phi$ by an automorphism
that preserves moving cones, but may not preserve polarizations.
Using \cite[Th.~1.2(b)]{BM2}, we may modify the stability condition on
$M_v(S)$ yet again so that the polarization $g$ on $F_1(X)$ 
is identified with a polarization on $M_v(S)$. Then
the resulting
$$\varpi=\varpi_3:M_v(S) \stackrel{\sim}{\dashrightarrow} F_1(X);$$
preserves ample cones and thus is an isomorphism. Hence $F_1(X)$
is isomorphic to some moduli space of $\sigma$-stable objects over $S$.
\end{proof}

\begin{rema}
As suggested by Addington and Thomas \cite[\S7.4]{AdTh},
it should be possible to employ stability conditions
to show that $\cA_X$ is equivalent to the derived
category of a K3 surface if and only if $X$ admits
an associated K3 surface. (We use the notation
of \S\ref{subsect:KuzRema}.)
\end{rema}

\begin{rema}
Two K3 surfaces $S_1$ 
and $S_2$ are derived equivalent if and only if 
$$\tilde{H}(S_1,\bZ)\simeq \tilde{H}(S_2,\bZ)$$
as weight two Hodge structures \cite[\S3]{Orlov}.
The proof of Theorem~\ref{theo:AT}
explains why the K3 surfaces associated with
a given cubic fourfold are all 
derived equivalent, as mentioned
in Remark~\ref{rema:DE}.
\end{rema}

There are other geometric explanations for K3 surfaces
associated with special cubic fourfolds.
Fix $X$ to be a cubic fourfold not containing a plane. Let 
$M_3(X)$ denote the moduli space of generalized twisted cubics on
$X$, i.e., closed subschemes arising as flat limits of twisted cubics
in projective space.
Then $M_3(X)$ is smooth and irreducible of dimension ten \cite[Thm.~A]{LLSvS}.
Choose $[C] \in M_3(X)$ such that $C$ is a smooth twisted
cubic curve and $W:=\operatorname{span}(C) \cap X$
is a smooth cubic surface. Then the linear series $|\cO_W(C)|$ is 
two-dimensional, so we have a distinguished subvariety
$$[C] \in \bP^2 \subset M_3(X).$$
Then there exists an eight-dimensional irreducible holomorphic symplectic manifold $Z$
and morphisms
$$M_3(X) \stackrel{a}{\lra} Z' \stackrel{\sigma}{\lra} Z$$
where $a$ is an \'etale-locally trivial $\bP^2$ bundle and $\sigma$ is birational 
\cite[Thm.~B]{LLSvS}.
Moreover, $Z$ is deformation equivalent to the Hilbert scheme of length
four subschemes on a K3 surface \cite{AL}.
Indeed, if $X$ is Pfaffian with associated K3 surface $S$ then
$Z$ is birational to $S^{[4]}$. 
It would be useful to have a version of Theorem~\ref{theo:AT}
with $Z$ playing the role of $F_1(X)$.

\section{Unirational parametrizations}
\label{sect:unirat}
We saw in \S\ref{subsect:GGP} that
smooth cubic fourfolds always admit unirational parametrizations
of degree two.
How common are unirational parametrizations of odd degree? 

We review the double point formula \cite[\S9.3]{Fulton}:
Let $S'$ be a nonsingular projective surface, $P$ a
nonsingular projective fourfold, and $f:S' \ra P$ a morphism 
with image $S=f(S')$. We assume that $f:S' \ra S$ is an
isomorphism away from a finite subset of $S'$; equivalently,
$S$ has finitely many singularities with normalization $f:S' \ra S$.
The {\em double point class} $\bD(f) \in \CH_0(S')$ is given
by the formula
$$\begin{array}{rcl}
\bD(f)&=&f^*f_*[S']-(c(f^*T_P)c(T_{S'})^{-1})_2\cap [S'] \\
      &=&f^*f_*[S']-(c_2(f^*T_P)-c_1(T_{S'})c_1(f^*T_P)+c_1(T_{S'})^2
			-c_2(T_{S'})) 
\end{array}
$$
We define
$$D_{S\subset P}=\frac{1}{2}([S]^2_P -
		c_2(f^*T_P)+c_1(T_{S'})c_1(f^*T_P)-c_1(T_{S'})^2
                        +c_2(T_{S'}));$$
if $S$ has just transverse double points then $D_{S\subset P}$
is the number of these singularities.

\begin{exam} (cf.~Lemma~\ref{lemm:scroll})
Let $S'\simeq \bP^1 \times \bP^1\subset \bP^5$ be a quartic scroll,
$P=\bP^4$, and $f:S' \ra \bP^4$ a generic projection. Then
we have 
$$D_{S\subset \bP^4}=\frac{1}{2}(16-40+30-8+4)=1$$
double point. 
\end{exam}

\begin{prop} \label{prop:unirat}
Let $X$ be a cubic fourfold and $S\subset X$ a rational surface of
degree $d$.
Suppose that $S$ has isolated singularities and smooth normalization $S'$,
with invariants 
$D=\deg(S)$, section genus $g$, and self-intersection $\left<S,S\right>$.
If
\begin{equation} \label{eqn:varrho}
\varrho=\varrho(S,X):=\frac{D(D-2)}{2}+(2-2g)-\frac{1}{2}\left<S,S\right> > 0
\end{equation}
then $X$ admits a unirational parametrization $\rho:\bP^4 \dashrightarrow X$ of
degree $\varrho$.
\end{prop}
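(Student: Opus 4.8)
The plan is to mimic the construction behind Remark~\ref{rema:scroll}, replacing the ruling of a scroll by the family of secant lines of $S$. Since $S$ is rational, fix a birational parametrization $\bP^2 \dashrightarrow S'$; then pairs of points on $S'$ are parametrized (up to the birational ambiguity) by $\Sym^2(\bP^2)$, which is rational of dimension four. For a generic pair of points $s_1,s_2 \in S'$ with images in $S \subset X$, let $\ell(s_1,s_2)$ be the secant line they span. By Bezout, $\ell(s_1,s_2) \cap X$ consists of $s_1$, $s_2$, and a residual point $\rho(s_1,s_2) \in X$ (when the line is not contained in $X$). This defines a rational map
$$
\rho: \Sym^2(S') \dashrightarrow X,
$$
and since $\Sym^2(S')$ is rational of dimension four, precomposing with $\bP^4 \dashrightarrow \Sym^2(S')$ gives a rational map $\bP^4 \dashrightarrow X$. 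First I would check this map is dominant: a generic point $x \in X$ lies on some secant of $S$ as soon as the secant variety of $S$ fills $\bP^5$, which holds whenever $S$ is a nondegenerate surface of sufficiently large degree; more robustly, the count below shows $\rho$ has positive generic fiber cardinality, hence is dominant.

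The degree of $\rho$ is the number of secants to $S$ through a generic point $x \in X$. This is where the double point formula enters. Consider a generic projection $\pi: \bP^5 \dashrightarrow \bP^4$ from the point $x$; it restricts to $f: S' \dashrightarrow \bP^4$, generically finite onto its image, and secants of $S$ through $x$ correspond exactly to double points of $f(S')$, i.e.\ to the double point class $D_{S \subset \bP^4}$ computed with $P = \bP^4$. The plan is to evaluate the formula
$$
D_{S\subset \bP^4}=\tfrac{1}{2}\bigl([S]^2_{\bP^4} - c_2(f^*T_{\bP^4}) + c_1(T_{S'})c_1(f^*T_{\bP^4}) - c_1(T_{S'})^2 + c_2(T_{S'})\bigr)
$$
in terms of the stated invariants of $S$ inside $X$ rather than inside $\bP^4$. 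Here $[S]^2_{\bP^4} = D^2$ for a degree $D$ surface in $\bP^4$; the Chern class terms $f^*T_{\bP^4}$ can be rewritten using the Euler sequence on $\bP^4$ restricted to $S$, giving $c_1(f^*T_{\bP^4}) = 5H$ and expressing $c_2$ via $H^2 = D$ and $c_1(T_{S'}) H = -K_{S'}H = 2g-2-D$ (adjusting the hyperplane-section genus convention), while $c_1(T_{S'})^2 = K_{S'}^2$ and $c_2(T_{S'}) = \chi(S')$ by Noether. Feeding these in, and using the self-intersection formula from \S\ref{subsect:SCFD} relating $\left<S,S\right> = 6H^2 + 3HK_{S'} + K_{S'}^2 - \chi(S')$ inside the cubic fourfold to eliminate the $K_{S'}^2$ and $\chi(S')$ contributions, the arithmetic should collapse precisely to
$$
\varrho = \frac{D(D-2)}{2} + (2-2g) - \frac{1}{2}\left<S,S\right>.
$$
Finally, since the generic fiber of a dominant rational map of varieties of the same dimension is a reduced set of $\deg(\rho)$ points and here $\deg(\rho) = D_{S\subset\bP^4} = \varrho$, the hypothesis $\varrho > 0$ guarantees dominance and yields the unirational parametrization of degree $\varrho$.

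The main obstacle is twofold. First, the bookkeeping in the double point formula is delicate: the two relevant ``self-intersections'' of $S$ live in different ambient fourfolds ($\bP^4$ versus $X$), the section genus $g$ must be matched to the correct adjunction normalization, and one must be careful that $f$ really is birational onto its image and has only isolated double points — this requires $x$ to be generic and $S$ to have at worst isolated singularities, which is assumed. Second, there is a subtlety that $\rho$ is defined on $\Sym^2(S')$, so the count of secants through $x$ is literally the number of \emph{unordered} pairs, and one must ensure the double point class counts these correctly (each honest secant meeting $S$ in two distinct smooth points contributes one, and the genericity of $x$ rules out tangent lines and trisecants through $x$). Granting this, the identification $\deg(\rho) = \varrho$ and the conclusion follow as above.
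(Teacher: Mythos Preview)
Your overall strategy matches the paper's: form the residual-intersection map $\rho:\Sym^2(S')\dashrightarrow X$, identify $\deg(\rho)$ with the secant count through a generic $x\in X$, and compute via the double point formula applied to the projection $\pi_x$. The paper also spends a paragraph verifying that a generic $x\in X$ avoids tangent spaces and positive-dimensional secant families, so that $\pi_x|_S$ is birational onto its image with only isolated singularities; you flag this as an obstacle but do not carry it out.

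There is, however, a real gap in your secant count. The hypothesis allows $S$ itself to have isolated singularities, and these persist under projection. Thus the double point class $D_{\hat S\subset\bP^4}$ of the projected surface counts \emph{both} the singularities inherited from $S$ \emph{and} the new nodes created by secants through $x$; only the latter contribute to $\deg(\rho)$, since a pair $(s_1',s_2')\in\Sym^2(S')$ lying over a single singular point of $S$ spans no line and sits in the indeterminacy locus. Hence
\[
\deg(\rho)\;=\;D_{\hat S\subset\bP^4}-D_{S\subset X},
\]
not $D_{\hat S\subset\bP^4}$ alone. Relatedly, the self-intersection formula $\left<S,S\right>=6H^2+3HK_{S'}+K_{S'}^2-\chi(S')$ from \S\ref{subsect:SCFD} that you invoke is derived for \emph{smooth} $S$; for singular $S$ with normalization $S'$ the double point formula gives $\left<S,S\right>=2D_{S\subset X}+6H^2+3HK_{S'}+K_{S'}^2-\chi(S')$. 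If you make both corrections the extra $D_{S\subset X}$ terms cancel and you recover $\varrho$. This is precisely how the paper organizes the computation: it writes the double point formula once with ambient $X$ and once with ambient $\bP^4$, subtracts, and the intrinsic Chern numbers $K_{S'}^2$, $\chi(S')$ drop out directly, without ever appealing to the smooth-surface self-intersection formula.
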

This draws on ideas from \cite[\S7]{HTGAFA} and \cite[\S5]{VoisinJEMS}.
\begin{proof}
We analyze points $x\in X$ such that the projection
$$f=\pi_x: \bP^5 \dashrightarrow \bP^4$$
maps $S$ birationally to a surface $\hat{S}$
with finitely many singularities, and $f:S\ra \hat{S}$ is finite
and unramified.
Thus $S$ and $\hat{S}$ have the same normalization.

Consider the following conditions:
\begin{enumerate}
\item{$x$ is contained in the tangent space to some singular
point $s\in S$;}
\item{$x$ is contained in the tangent space to some smooth
point $s\in S$;}
\item{$x$ is contained in a
positive-dimensional family of secants to $S$.}
\end{enumerate}
The first condition can be avoided by taking $x$ outside a finite collection
of linear subspaces. 
The second condition can be avoided by taking $x$ outside the tangent variety
of $S$. This cannot concide with a smooth cubic fourfold, which
contains at most finitely many two-planes \cite[Appendix]{BrHB}.
We turn to the third condition. If the secants to $S$ sweep
out a subvariety $Y\subsetneq \bP^5$ then $Y$ cannot be a smooth
cubic fourfold. (The closure of $Y$ contains all the tangent planes to
$S$.) When the secants to $S$ dominate $\bP^5$ then the locus of points 
in $\bP^5$ on 
infinitely many secants has codimension at least two. 

Projection from a point on $X$ outside these three loci
induces a morphism
$$S \ra \hat{S}=f(S)$$
birational and unramified onto its image. Moreover, this image has finitely many
singularities that are resolved by normalization.

Let $W$ denote the second symmetric power of $S$.
Since $S$ is rational, $W$ is rational as well.
There is a rational map coming from residual intersection
$$\begin{array}{rcl}
\rho:W &\dashrightarrow & X \\
s_1+s_2 & \mapsto & x
\end{array}
$$
where $\ell(s_1,s_2)\cap X = \{s_1,s_2,x\}.$
This is well-defined at the generic point of $W$ as the generic
secant to $S$ is not contained in $X$.
(An illustrative special case can be found in Remark~\ref{rema:scroll}.)

The degree of $\rho$ is equal to the number of secants
to $S$ through a generic point of $X$. The analysis
above shows this equals the number of secants to $S$ through
a generic point of $\bP^5$. These in turn correspond
to the number of double points of $\hat{S}$ arising from {\em generic}
projection
to $\bP^4$, i.e.,
$$\deg(\rho)=D_{\hat{S}\subset \bP^4}-D_{S\subset X}.$$

The double point formula gives
$$\begin{array}{rcl}
2D_{S,X}&=&\left<S,S\right>
-(c_2(T_X|S')-c_1(T_{S'})c_1(T_X|S')+c_1(T_{S'})
			-c_2(T_{S'})) \\
2D_{\hat{S},\bP^4}&=& \left<\hat{S},\hat{S}\right>_{\bP^4}
-(c_2(T_{\bP^4}|S')-c_1(T_{S'})c_1(T_{\bP^4}|S')+c_1(T_{S'})
			-c_2(T_{S'})) 
\end{array}
$$
where $\left<\hat{S},\hat{S}\right>_{\bP^4}=D^2$ by Bezout's Theorem.
Taking differences (cf.\S\ref{subsect:SCFD})
yields
$$D_{\hat{S}\subset \bP^4}-D_{S\subset X}=
\frac{1}{2}(D^2-4D+2Hc_1(T_{S'})+\left<S,S\right>),$$
where $H=h|S$.
Using the adjunction formula
$$2g-2=H^2+K_{S'}H$$
we obtain (\ref{eqn:varrho}).
\end{proof}

\begin{coro}[Odd degree unirational parametrizations]
Retain the notation of Proposition~\ref{prop:unirat} and assume that
$S$ is not homologous to a complete intersection. Consider 
the discriminant 
$$d=3\left<S,S\right>-D^2$$
of 
$$
\begin{array}{r|cc}
		& h^2 & S \\
	\hline 
	h^2 & 3 & D \\
	S & D & \left<S,S\right>
\end{array}.
$$
Then the degree
$$
\varrho(S,X)=\frac{d}{2}- 2\left<S,S\right>+(2-2g)+(D^2-D)
$$
has the same parity as $\frac{d}{2}$. Thus the degree of
$\rho:\bP^4 \dashrightarrow X$ is odd
provided $d$ is not divisble by four.
\end{coro}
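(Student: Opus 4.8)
The plan is to deduce the corollary directly from Proposition~\ref{prop:unirat}, by first rewriting $\varrho(S,X)$ in terms of the discriminant $d$ and then reading off its parity. The displayed formula for $\varrho(S,X)$ is a pure rearrangement of (\ref{eqn:varrho}): substituting $d=3\left<S,S\right>-D^2$ gives
$$\frac{d}{2}-2\left<S,S\right>+(2-2g)+(D^2-D)=\frac{3\left<S,S\right>-D^2}{2}-2\left<S,S\right>+(2-2g)+D^2-D,$$
and collecting terms produces $-\tfrac12\left<S,S\right>$ from the self-intersection contributions and $\tfrac{D^2-2D}{2}=\tfrac{D(D-2)}{2}$ from the degree contributions, recovering exactly $\varrho(S,X)=\frac{D(D-2)}{2}+(2-2g)-\frac12\left<S,S\right>$. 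So the first identity in the corollary needs no further argument.

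Next I would verify that $d$ is even, so that $\tfrac d2$ is an integer and the parity assertion is meaningful. Here I use the structure of $H^4(X,\bZ)$ from \S\ref{subsect:SCD}. Since $S$ is not homologous to a complete intersection, $h^2$ and $[S]$ are linearly independent; set $v:=3S-Dh^2\in H^4(X,\bZ)$. Then $\left<v,h^2\right>=3D-D\cdot 3=0$, so $v$ lies in the primitive lattice $\{h^2\}^{\perp}=H^4(X,\bZ)_{\prim}\simeq A_2\oplus U^{\oplus 2}\oplus E_8^{\oplus 2}$, which is even. Expanding,
$$\left<v,v\right>=9\left<S,S\right>-6D^2+3D^2=3\bigl(3\left<S,S\right>-D^2\bigr)=3d,$$
so $3d$, and hence $d$, is even.

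Granting this, the parity claim is immediate: in $\varrho(S,X)=\frac d2-2\left<S,S\right>+(2-2g)+(D^2-D)$ each correction term $-2\left<S,S\right>$, $2-2g$, and $D^2-D=D(D-1)$ is even — the last because $D(D-1)$ is a product of two consecutive integers — so $\varrho(S,X)\equiv\frac d2\pmod 2$. If in addition $4\nmid d$, then $\frac d2$ is odd, hence $\varrho(S,X)$ is odd; as $S$ is rational with isolated singularities and smooth normalization, Proposition~\ref{prop:unirat} (applied when $\varrho(S,X)>0$) then yields a unirational parametrization $\rho:\bP^4\dashrightarrow X$ of odd degree $\varrho(S,X)$.

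The argument is elementary bookkeeping throughout; the only step that is not a one-line calculation is the evenness of $d$, and the point to get right there is simply that $v=3S-Dh^2$ is a primitive-cohomology class and the primitive lattice of a cubic fourfold is even, which forces $\left<v,v\right>=3d$ — and hence $d$ — to be even. The one mildly clever observation is that $D(D-1)$ contributes nothing to the parity.
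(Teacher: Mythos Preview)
Your proof is correct and follows exactly the line the paper leaves implicit: the corollary is stated without proof in the paper, being an immediate arithmetic consequence of formula~(\ref{eqn:varrho}) together with the evenness of the primitive lattice $L^0$ (cf.\ Proposition~\ref{prop:rough}). Your explicit verification that $d$ is even via $v=3S-Dh^2\in\{h^2\}^{\perp}$ and the observation that $D(D-1)$ is even are precisely the two small points needed, and you handle them cleanly.
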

Compare this with Theorem~\ref{theo:IDOD} below.

How do we obtain surfaces satisfying the assumptions of 
Proposition~\ref{prop:unirat}? Nuer \cite[\S3]{Nuer}
exhibits {\em smooth} such surfaces
for all $d\le 38$, thus we obtain
\begin{coro}
A generic cubic fourfold $X\in \cC_d$, for $d=14, 18, 26, 30,$ and $38$,
admits a unirational parametrization of odd degree.
\end{coro}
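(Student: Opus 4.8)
The plan is to combine Proposition~\ref{prop:unirat} (and its corollary on odd-degree parametrizations) with Nuer's explicit construction of smooth rational surfaces in generic special cubic fourfolds of small discriminant. First I would recall from Nuer \cite[\S3]{Nuer} that for every even $d$ with $8 \le d \le 38$ and $d \equiv 0,2 \pmod 6$, a generic $X \in \cC_d$ contains a \emph{smooth} rational surface $S$ realizing a labelling of discriminant $d$; these $S$ are blow-ups of $\bP^2$ at points in general position, embedded by explicit linear systems, so their invariants---degree $D$, section genus $g$, and self-intersection $\left<S,S\right>$---are all computable from the defining linear series. Since $S$ is smooth, the hypotheses of Proposition~\ref{prop:unirat} (isolated singularities, smooth normalization $S'=S$) are automatic, and the only thing to check is that the quantity $\varrho(S,X)$ of \eqref{eqn:varrho} is strictly positive and odd.

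Next, for each of the five values $d = 14, 18, 26, 30, 38$ I would tabulate the relevant $S$ from Nuer's list and compute $\varrho(S,X) = \tfrac{D(D-2)}{2} + (2-2g) - \tfrac12 \left<S,S\right>$ directly. For instance, $d=14$ is handled by the quartic scroll or quintic del Pezzo surface of \S\ref{subsect:SCFD} (where $D=4$, $g=0$, $\left<S,S\right>=10$ gives $\varrho = 4 + 2 - 5 = 1$), $d=18$ by the generic projection of a sextic del Pezzo surface, $d=20$ by the Veronese surface (which is excluded here since $4 \mid 20$), and so forth. By the Corollary on odd-degree parametrizations, $\varrho(S,X)$ has the same parity as $d/2$, hence is odd exactly when $4 \nmid d$; among $d \le 38$ with $d \equiv 0,2 \pmod 6$ the admissible-in-this-sense values with $4 \nmid d$ and with Nuer's smooth surface available are precisely $14, 18, 26, 30, 38$. (The value $d=20$ is omitted because $4 \mid 20$; values such as $d=8,12$ give $\varrho \le 0$ for the natural surface and so do not immediately yield a parametrization by this method.) Positivity of $\varrho$ must be verified case by case, but in each instance the numbers are small and the inequality is easily seen to hold.

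The main obstacle is not conceptual but bookkeeping: one must be sure that for each listed $d$ the specific surface $S$ produced by Nuer genuinely satisfies both $\varrho > 0$ and $4 \nmid d$, and that the secant geometry is generic enough that the three exceptional loci in the proof of Proposition~\ref{prop:unirat} (tangent spaces at singular and smooth points, and points on infinitely many secants) are proper subvarieties---this last point uses that a smooth cubic fourfold contains only finitely many $2$-planes \cite[Appendix]{BrHB}, which already entered the proof of the Proposition. Once these verifications are in place, Proposition~\ref{prop:unirat} yields a degree-$\varrho(S,X)$ unirational parametrization $\rho : \bP^4 \dashrightarrow X$, and the Corollary identifies $\varrho(S,X)$ as odd, completing the proof.
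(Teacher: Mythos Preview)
Your proposal is correct and follows exactly the paper's approach: invoke Nuer's smooth rational surfaces for $d \le 38$ to satisfy the hypotheses of Proposition~\ref{prop:unirat}, then apply the parity corollary to conclude that $\varrho$ is odd precisely when $4 \nmid d$, which singles out $d=14,18,26,30,38$. One small quibble: $d=8$ and $d=12$ are excluded for the same reason as $d=20$, namely $4 \mid d$ (the fact that $\varrho=0$ for the plane and cubic scroll is true but incidental).
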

There are hueristic 
constructions of such surfaces in far more examples
\cite[\S7]{HTGAFA}. Let $X \in \cC_d$ and consider
its variety of lines $F_1(X)$; for simplicity, assume the Picard group
of $F_1(X)$ has rank two. Recent work of \cite{BMT,BM2} completely
characterizes
rational curves 
$$R\simeq \bP^1 \subset F_1(X)$$
associated with extremal birational contractions of $F_1(X)$.
The incidence correspondence
$$\xymatrix{
\mathcal{INC}\ar[r] \ar[d]  & F_1(X) \\
      X      &      
}
$$
yields 
$$S':=\mathcal{INC}|R \ra S \subset X,$$
i.e., a ruled surface with smooth normalization.
\begin{ques}
When does the resulting ruled surfaces have 
isolated singularities? Is this the case when $R$ is a generic
rational curve arising as an extremal ray of a birational contraction?
\end{ques}
The discussion of \cite[\S7]{HTGAFA} fails to address the
singularity issues, and should be seen as a heuristic approach
rather than a rigorous construction.
The technical issues are illustrated by the following:
\begin{exam}[Voisin's counterexample]
Assume that $X$ is not special so that $\Pic(F_1(X))=\bZ g$.
Let $R$ denote the positive degree generator of the
Hodge classes $N_1(X,\bZ)\subset H_2(F_1(X),\bZ)$; the lattice computations
in \S\ref{subsect:inter} imply that
$$g\cdot R=\frac{1}{2} \left(g,g\right)=3.$$
Moreover, there is a two-parameter family of rational curves
$\bP^1 \subset X, [\bP^1]=R$, corresponding to the cubic surfaces
$S'\subset X$ singular along a line.

These may be seen as follows: The cubic surfaces singular along some
line have codimension seven in the parameter space of all cubic surfaces.
However, there is a nine-parameter family of cubic surface sections
of a given cubic fourfold, parametrized by $\Gr(4,6)$.
Indeed, for a fixed flag
$$\ell \subset \bP^3 \subset \bP^5$$
a tangent space computation shows that the cubic fourfolds 
$$\{X: X\cap \bP^3 \text{ singular along the line } \ell \}$$
dominate the moduli space $\cC$. 

Let $S' \subset \bP^3$ be a cubic surface singular
along a line and $X\supset S'$
a smooth cubic fourfold. Since the generic point of $X$ does not lie on a 
secant line of $S'$, it cannot be used to produce a
unirational parametrization of $X$.
The reasoning for 
Proposition~\ref{prop:unirat} and formula (\ref{eqn:varrho})
is not valid, as $S'$ has
non-isolated singularities.
\end{exam}

Nevertheless, the machinery developed here indicates where
to look for unirational parameterizations of odd degree:
\begin{exam}[$d=42$ case]
Let $X\in \cC_{42}$ be generic.
By Theorem~\ref{theo:Fano}, $F_1(X)\simeq T^{[2]}$ where
$(T,f)$ is K3 surface of degree $42$ and $g=2f-9\delta$.
Take $R$ to be one of the rulings of the divisor in $T^{[2]}$
parametrizing non-reduced subschemes, i.e., those subschemes
supported at a prescribed point of $T$. Note that $R\cdot g = 9$
so the ruled surface $S$ associated with the incidence 
correspondence has numerical invariants:
$$\begin{array}{r|cc}
	& h^2 & S \\
\hline
h^2 & 3 & 9 \\
S   & 9 & 41
\end{array}$$
{\em Assuming} $S$ has isolated singularities, we have
$D_{S\subset X}=8$
and $X$ admits a unirational parametrization of degree
$\varrho(S\subset X)=13.$
{\bf Challenge:} Verify the singularity 
assumption for some $X\in \cC_{42}$.
\end{exam}

\section{Decomposition of the diagonal}
Let $X$ be a smooth projective variety of dimension $n$ over $\bC$.
A {\em decomposition of the diagonal} of $X$ is a rational equivalence
in $\CH^n(X\times X)$
$$N\Delta_X \equiv N \{x \times X\} + Z,$$
where $N$ is a non-zero integer, $x\in X(\bC)$,
and $Z$ is supported on $X\times D$
for some subvariety $D\subsetneq X$. 
Let $N(X)$ denote the smallest positive integer $N$ occuring in a 
decomposition of the diagonal, which coincides with the greatest
common divisor of such integers.
If $N(X)=1$ we say $X$ admits an {\em integral decomposition 
of the diagonal}.

\begin{prop} \cite[Lem.~1.3]{ACTP} \cite[Lem.~4.6]{VoisinSDG}
\label{prop:DDChow} 
$X$ admits a decomposition of the diagonal if and only if 
$$A_0=\{ P \in \CH_0: \deg(P)=0\}$$
is universally $N$-torsion for some positive integer $N$, 
i.e., for each extension 
$F/\bC$ we have $NA_0(X_F)=0$. 
Moreover, $N(X)$ is the annihilator of the torsion.
\end{prop}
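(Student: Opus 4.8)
The plan is to route everything through the single zero-cycle
$$\delta_X := [\eta_X] - [x_{\bC(X)}] \in A_0(X_{\bC(X)}),$$
where $\eta_X$ is the tautological $\bC(X)$-point of $X_{\bC(X)}$, obtained by restricting the diagonal $\Delta_X \subset X\times X$ to the generic fibre of the second projection. First I would establish the dictionary
$$N\Delta_X\equiv N\{x\times X\}+Z\ (Z\subset X\times D,\ D\subsetneq X)\quad\Longleftrightarrow\quad N\delta_X=0\text{ in }A_0(X_{\bC(X)}).$$
For this I use $\CH_0(X_{\bC(X)})=\varinjlim_{\emptyset\ne U\subseteq X}\CH_n(X\times U)$ together with the localization exact sequences $\CH_n(X\times D)\to\CH_n(X\times X)\to\CH_n(X\times U)\to 0$ for $U=X\setminus D$: the resulting restriction map $r\colon\CH_n(X\times X)\to A_0(X_{\bC(X)})$ satisfies $r(\Delta_X)=[\eta_X]$, $r(\{x\}\times X)=[x_{\bC(X)}]$, and $\ker r=\bigcup_{D\subsetneq X}\operatorname{im}\bigl(\CH_n(X\times D)\to\CH_n(X\times X)\bigr)$. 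Hence a decomposition of the diagonal forces $N\delta_X=0$, and conversely $N\delta_X=0$ says $N\Delta_X-N\{x\times X\}\in\ker r$, which is exactly the existence of such a $Z$. Since $N\delta_X=0$ and $M\delta_X=0$ give $\gcd(N,M)\delta_X=0$, the admissible $N$ are precisely the positive multiples of $\operatorname{ord}(\delta_X)$, so $N(X)=\operatorname{ord}(\delta_X)$, which also equals the gcd of the integers occurring in decompositions.

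Next I would show $N\delta_X=0\Rightarrow NA_0(X_F)=0$ for every extension $F/\bC$. Base-changing a decomposition $N\Delta_X\equiv N\{x\times X\}+Z$ yields $N\Delta_{X_F}\equiv N\{x\times X_F\}+Z_F$ with $Z_F$ supported on $X_F\times D_F$ and $D_F\subsetneq X_F$. Let $z\in A_0(X_F)$. By the moving lemma on the smooth quasi-projective variety $X_F$, replace $z$ by a rationally equivalent zero-cycle whose support is disjoint from the codimension $\ge 1$ subset $D_F$. Letting correspondences act by $\gamma_*w=p_{1*}(\gamma\cdot p_2^*w)$, one has $\Delta_*z=z$, while $(\{x\}\times X_F)_*z=\deg(z)\,[x_F]=0$ because $\deg z=0$, and $(Z_F)_*z=0$ because $p_2^*z$ has support disjoint from $|Z_F|\subseteq X_F\times D_F$. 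Therefore $Nz=N\Delta_*z=N(\{x\}\times X_F)_*z+(Z_F)_*z=0$, that is, $NA_0(X_F)=0$.

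The remaining implication is immediate: specializing $F=\bC(X)$ in the hypothesis $NA_0(X_F)=0$ gives $N\delta_X=0$. Assembling these three equivalences, $X$ admits a decomposition of the diagonal if and only if $A_0$ is universally $N$-torsion for some positive $N$, and in that case $N(X)=\operatorname{ord}(\delta_X)$ is exactly the annihilator of this universal torsion. The only points needing care are the bookkeeping of the first paragraph — the identification $\CH_0(X_{\bC(X)})=\varinjlim_U\CH_n(X\times U)$ and the description of $\ker r$ as the span of cycles supported away from the generic fibre — and, in the second paragraph, invoking the moving lemma in the form ``a zero-cycle on a smooth quasi-projective variety over any field can be moved off a given proper closed subset''; both are standard, and everything else is formal correspondence calculus, so I anticipate no genuine obstacle.
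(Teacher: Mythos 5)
Your argument is correct and is the standard one; it fills in exactly the steps the paper only sketches (the identification $\CH_0(X_{\bC(X)})=\varinjlim_U\CH_n(X\times U)$ via localization in one direction, base change of the decomposition plus the action of correspondences on zero-cycles in the other), and the closing of the loop through $\delta_X$ to identify $N(X)$ with the annihilator is the intended reading of the cited lemmas of Auel--Colliot-Th\'el\`ene--Parimala and Voisin. No gaps; the two points you flag (the colimit description of the kernel of restriction to the generic fibre, and the easy moving lemma for zero-cycles over the infinite fields $F\supseteq\bC$) are indeed the only ones requiring care and are handled correctly.
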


We sketch this for the convenience of the reader:
Basic properties of Chow groups give the equivalence of the
decomposition of $N\Delta_X$ with $NA_0(X_{\bC(X)})=0$.
Indeed, $A_0(X_{\bC(X)})$ is the inverse limit of 
$A_0(X \times U)$ for all open $U\subset X$.
Conversely, taking the basechange of a decomposition of the diagonal
to the extension $F$ gives that $A_0(X_F)$ is annihilated by $N$.

We recall situations where we have decompositions of the diagonal:

\subsection*{Rationally connected varieties}
Suppose $X$ is rationally connected and
choose $\beta \in H_2(X,\bZ)$ such that
the evaluation
$$M_{0,2}(X,\beta) \ra X \times X$$
is dominant. Fix an irreducible component $M$ of
$M_{0,2}(X,\beta)\times_{X\times X} \bC(X\times X)$.
Then $N(X)$ divides the index $\iota(M)$.
Indeed, each effective zero-cycle $Z\subset M$ 
corresponds to $|Z|$ conjugate rational curves joining 
generic $x_1,x_2 \in X$. Together these give $|Z|x_1=|Z|x_2$
in $\CH_0(X_{\bC(X)})$.
Thus we obtain a decomposition of the diagonal.
See \cite[Prop.~11]{CTRC} for more details.

\subsection*{Unirational varieties}
If $\rho: \bP^n \dashrightarrow X$ has degree $\varrho$ then
$N(X)|\varrho$. Thus $N(X)$ divides the greatest common divisor of
the degrees of unirational parametrizations of $X$.
A cubic hypersurface $X$ of dimension at least two admits
a degree two unirational parametrization (see Prop~\ref{prop:degreetwo}),
so $N(X)|2$.
We saw in \S\ref{sect:unirat} that many
classes of special cubic fourfolds admit odd degree unirational
parametrizations.
In these cases, we obtain {\em integral} decompositions of the diagonal.

\subsection*{Rational and stably rational varieties}
The case of rational varieties follows
from our analysis of unirational parametrizations. 
For the stably rational case, it suffices to observe that
$$A_0(Y)\simeq A_0(Y\times \bP^1)$$
and use the equivalence of Proposition~\ref{prop:DDChow}
(see \cite[Prop.~4.7]{VoisinSDG} for details).
Here we obtain an {\em integral} decomposition of the diagonal.

\

Remarkably, at least half of the special cubic fourfolds
admit integral decompositions of the diagonal:

\begin{theo} \cite[Th.~5.6]{VoisinJEMS} \label{theo:IDOD}
A special cubic fourfold of discriminant $d\equiv 2\pmod{4}$
admits an integral decomposition of the diagonal.
\end{theo}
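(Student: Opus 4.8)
The plan is to use the structural results of Section~2 to reduce the statement to the production of a single explicit surface on one special cubic fourfold in each relevant family, then invoke Proposition~\ref{prop:unirat} and the specialization/deformation principle for decompositions of the diagonal. First I would recall from Theorem~\ref{theo:irred} that $\cC_d$ is irreducible, so it suffices to produce an integral decomposition of the diagonal for a \emph{very general} member $X\in\cC_d$; once $N(X_0)=1$ for one $X_0$ in the family (or on a dense subset), the property $N(X)=1$ spreads out, because a decomposition of the diagonal is a rational equivalence in $\CH^4(X\times X)$ and such cycles specialize. Concretely, if $\cX\to B$ is the universal family over a smooth base and $N\Delta\equiv N\{x\times\cX\}+Z$ holds over the generic point with $N=1$, then by spreading out and specializing it holds over a dense open, hence for the very general fiber; and by the constructible/countable-union structure of $\cC_d$, that is all we need.

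The heart of the argument is then: for $d\equiv 2\pmod 4$ and $d$ of the form allowing it, exhibit on some $X\in\cC_d$ a rational surface $S$ with isolated singularities, smooth normalization $S'$, and numerical invariants $(D,g,\langle S,S\rangle)$ making $\varrho(S,X)$ odd. By the Corollary to Proposition~\ref{prop:unirat}, $\varrho(S,X)$ has the same parity as $d/2$, so $d\equiv 2\pmod 4$ forces $\varrho$ odd, and an odd-degree unirational parametrization $\rho:\bP^4\dashrightarrow X$ forces $N(X)\mid\varrho$ with $\varrho$ odd; combined with the universal degree-two parametrization of Proposition~\ref{prop:degreetwo}, which gives $N(X)\mid 2$, we get $N(X)=1$. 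The remaining task is thus existence of a suitable $S$. For small $d$ this is Nuer's explicit list of smooth rational surfaces realizing $\cC_d$ for $d\le 38$; for general $d\equiv 2\pmod 4$ one argues instead on the \emph{generic} special cubic fourfold by a more flexible construction — e.g.\ via surfaces swept out by the incidence correspondence over extremal rational curves in $F_1(X)$ as in \S\ref{sect:unirat}, or directly via the lattice-theoretic classification of \S\ref{subsect:SCFD} showing a rank-two algebraic lattice $K_d$ with a class $S$ of the right self-intersection always occurs, and then checking that a representative surface can be taken with only isolated singularities.

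The step I expect to be the main obstacle is exactly this singularity control: Proposition~\ref{prop:unirat} requires $S$ to have \emph{isolated} singularities with smooth normalization, and the heuristic constructions of \S\ref{sect:unirat} (Voisin's counterexample being the cautionary case) can produce surfaces singular along a curve, for which the double-point-formula computation of $\varrho$ is invalid. I would handle this by not insisting on a universal construction: use Nuer's smooth surfaces where they are available, and for the remaining $d\equiv 2\pmod 4$ invoke Voisin's own argument — showing that the \emph{cohomology} of a generic special cubic fourfold can be generated by smooth surfaces, so that a smooth rational (or at worst mildly singular) surface $S$ with the required class $[S]\in K_d$ and hence the required $(D,g,\langle S,S\rangle)$ exists on the generic $X\in\cC_d$. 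Combined with the parity computation and the spreading-out principle above, this yields an integral decomposition of the diagonal for every special cubic fourfold of discriminant $d\equiv 2\pmod 4$, since $\cC_d$ is irreducible and the property, once established generically, propagates to all members.
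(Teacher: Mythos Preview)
The paper does not prove this theorem; it is quoted from Voisin with citation only. So there is no ``paper's own proof'' to compare against line by line. Nonetheless, your proposal has a genuine gap, and the gap is precisely where you suspect it.

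Your strategy is: produce on the generic $X\in\cC_d$ a \emph{rational} surface $S$ with isolated singularities, apply Proposition~\ref{prop:unirat} and its Corollary to get an odd-degree unirational parametrization, hence $N(X)\mid\gcd(2,\text{odd})=1$, then specialize. The specialization step is fine: an integral decomposition of the diagonal is a cycle relation and specializes, so it passes from the generic fiber to every fiber in $\cC_d$. The problem is the existence of $S$. You invoke ``Voisin's own argument --- showing that the cohomology of a generic special cubic fourfold can be generated by smooth surfaces,'' but those smooth surfaces are \emph{not known to be rational}. Proposition~\ref{prop:unirat} requires $S$ rational (so that $\Sym^2 S$ is rational and the residual-intersection map is a unirational parametrization). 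Whether the algebraic cohomology of a special cubic fourfold can always be generated by smooth \emph{rational} surfaces is exactly Question~\ref{prob:smoothrat} in this survey, and it is open. Even more pointedly, the survey poses immediately after Theorem~\ref{theo:IDOD} the question of whether special cubic fourfolds with $d\equiv 2\pmod 4$ admit odd-degree unirational parametrizations --- so the very implication you are trying to use as an input is itself an open problem.

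Voisin's actual argument does not pass through unirational parametrizations. She works directly in Chow groups: using the incidence correspondence between $X$ and $F_1(X)$ and the universal degree-two structure, she reduces the integral decomposition to a statement about algebraic $1$-cycles on $F_1(X)$, and then uses smooth (but not necessarily rational) surfaces in $X$ to produce the needed classes. The parity hypothesis $d\equiv 2\pmod 4$ enters as an integrality condition on the resulting cycle, not as a condition guaranteeing a rational surface. In short, your reduction ``integral decomposition $\Leftarrow$ odd-degree unirational parametrization $\Leftarrow$ suitable rational surface'' is logically sound but the last arrow is not available for all $d$; Voisin bypasses it by proving the first conclusion directly.
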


This suggests the following question:

\begin{ques}
Do special cubic fourfolds of discriminant $d\equiv 2\pmod{4}$
always admit unirational parametrizations of odd degree?
Are they stably rational?
\end{ques}

Cubic fourfolds do satisfy a universal cohomological condition that follows from 
an integral decomposition of the diagonal: They admit universally
trivial unramified $H^3$. This was proved first for
cubic fourfolds containing a plane
($d=8$) using deep properties of quadratic forms \cite{ACTP},
then in general by Voisin \cite[Exam.~3.2]{VoisinIM}.

\begin{ques}
Is there a cubic fourfold $X$ with
$$K_d = H^{2,2}(X,\bZ), \quad d\equiv 0\pmod{4},$$
and admitting an integral decomposition of the diagonal?
A unirational parametriziation of odd degree?
\end{ques}

\bibliographystyle{alpha}
\bibliography{CubicLectures}

\end{document}